\documentclass[11pt]{amsart}
\usepackage[leqno]{amsmath}
\usepackage{amsthm}
\usepackage{amsfonts, tikz-cd, enumerate,url}
\usetikzlibrary{babel}
\usepackage{amssymb}
\usepackage[all]{xy}
\usepackage{mathtools}
\usepackage{stmaryrd}
\usepackage[colorinlistoftodos]{todonotes}

\DeclareFontFamily{OT1}{rsfs}{}
\DeclareFontShape{OT1}{rsfs}{n}{it}{<-> rsfs10}{}
\DeclareMathAlphabet{\mathscr}{OT1}{rsfs}{n}{it}


   \topmargin=0in
   \oddsidemargin=.25in
   \evensidemargin=.25in
   \textwidth=6in
   \textheight=8.5in

\usepackage{xspace}
\usepackage{epsfig,epic,eepic,latexsym,color}
\usepackage[all]{xy}
\usepackage{comment} 
\usepackage[modulo]{lineno}
\usepackage{aliascnt} 
\usepackage[colorlinks=true,linkcolor=blue,citecolor=blue,urlcolor=blue,citebordercolor={0 0 1},urlbordercolor={0 0 1},linkbordercolor={0 0 1}]{hyperref} 
\usepackage[nameinlink]{cleveref}
\crefname{diagram}{Diagram}{Diagram}

\usepackage{latexsym}

\theoremstyle{plain}
\newtheorem{theorem}{Theorem}[section]
\newtheorem{corollary}[theorem]{Corollary}
\newtheorem{lemma}[theorem]{Lemma}
\newtheorem{proposition}[theorem]{Proposition}

\newtheorem{conjecture}[theorem]{Conjecture}

\theoremstyle{definition}
\newtheorem{definition}[theorem]{Definition}
\newtheorem{example}[theorem]{Example}

\theoremstyle{remark}
\newtheorem{remark}[theorem]{Remark}

\numberwithin{equation}{section}

\newcommand{\co}{\colon}

\newcommand{\spec}{\operatorname{Spec}}

\newcommand{\APerf}{\operatorname{APerf}}
\newcommand{\modd}{\operatorname{Mod}}
\newcommand{\Coh}{\operatorname{Coh}}
\newcommand{\qcoh}{\operatorname{qcoh}}
\newcommand{\coh}{\operatorname{coh}}
\newcommand{\Hom}{\operatorname{Hom}}
\newcommand{\Ext}{\operatorname{Ext}}

\newcommand{\GL}{\mathrm{GL}}
\newcommand{\SL}{\mathrm{SL}}

\newcommand{\rgamma}{\mathsf{R}\Gamma}
\newcommand{\cX}{\mathcal{X}}
\newcommand{\cF}{\mathcal{F}}
\newcommand{\cO}{\mathcal{O}}
\newcommand{\cI}{\mathcal{I}}
\newcommand{\cJ}{\mathcal{J}}
\newcommand{\cY}{\mathcal{Y}}
\newcommand{\cG}{\mathcal{G}}
\newcommand{\cU}{\mathcal{U}}
\newcommand{\cZ}{\mathcal{Z}}
\newcommand{\ev}{\operatorname{ev}}
\newcommand{\cE}{\mathcal{E}}
\newcommand{\cC}{\mathcal{C}}

\newcommand{\cM}{\mathcal{M}}

\newcommand{\tensor}{\otimes}
\newcommand{\red}{{\operatorname{red}}}


\newcommand{\oh}{\mathcal{O}}
\newcommand{\bG}{\mathbf{G}}
\newcommand{\Spec}{\operatorname{Spec}}
\renewcommand{\hat}{\widehat}
\newcommand{\limit}{\varprojlim}

\newcommand{\fm}{\mathfrak{m}}
\newcommand{\into}{\hookrightarrow}

\newcommand{\fG}{\mathfrak{G}}
\newcommand{\iso}{\stackrel{\sim}{\to}}
\newcommand{\bA}{\mathbf{A}}
\newcommand{\bZ}{\mathbf{Z}}
\newcommand{\Cent}{\operatorname{Cent}}
\newcommand{\Lie}{\operatorname{Lie}}

\newcommand{\bX}{\mathbb{X}}

\newcommand{\cT}{\mathcal{T}}

\def\gitq{/\hspace{-0.1cm}/}
\newcommand{\id}{\operatorname{id}}

\newcommand{\xto}{\xrightarrow}
\newcommand{\Stab}{\operatorname{Stab}}

\newcommand{\holim}[1]{\mathrm{holim}_{{#1}}}
\newcommand{\LDERF}{\mathsf{L}}
\newcommand{\RDERF}{\mathsf{R}}

\newcommand{\spref}[1]{\href{https://stacks.math.columbia.edu/tag/{#1}}{#1}}
\newcommand{\lisset}{\textrm{lisse,\'etale}}
\usepackage{ulem}
\renewcommand{\emph}{\textit} 

\title{Coherently Complete Algebraic Stacks in Positive Characteristic}
\author{Jarod Alper}
\author{Jack Hall}
\author{David Benjamin Lim}

\address{The University of Washington, Department of Mathematics, Box 354350, Seattle, WA 98195-4350, USA}
\email{jarod@uw.edu}

\address{School of Mathematics \& Statistics\\The University of Melbourne\\Parkville,
  VIC, 3010\\Australia}
\email{jack.hall@unimelb.edu.au}

\email{benlim@alumni.stanford.edu}
\date{\today}

\begin{document}
\maketitle

\begin{abstract}
  With the long-term goal of proving local structure theorems of
  algebraic stacks in positive characteristic near points with
  reductive (but possibly non-linearly reductive) stabilizer, we
  conjecture that quotient stacks of the form $[\Spec A/G]$, with $G$
  reductive and $A^G$ complete local, are coherently complete along
  the unique closed point.  We establish this conjecture in two
  interesting cases: (1) $A^G$ is artinian and (2) $G$ acts trivially
  on $\Spec A$.  We also establish coherent completeness results for
  graded unipotent group actions.  In order to establish these
  results, we prove a number of foundational statements concerning
  cohomological and completeness properties of algebraic
  stacks---including on how these properties ascend and descend along
  morphisms.
\end{abstract}
\setcounter{tocdepth}{1}
\tableofcontents

\section{Introduction}
Let $X$ be a proper scheme over a complete noetherian local ring
$(R, \fm_R)$. There are three important theorems that govern the
formal geometry of $X$, analogous to Grauert's finiteness theorem and
Serre's GAGA theorems in the setting of complex analytic geometry:
\begin{enumerate}[(1)]\label{intro:properties}
\item \emph{Finiteness of Cohomology}\label{intro:properties:finite}:
  For any coherent sheaf $F$ on $X$, the cohomology group
  $\mathrm{H}^i(X,F)$ is a finitely generated $R$-module for all $i$.
\item \emph{Formal Functions}\label{intro:properties:formal}: Let
  $X_0 := X \otimes_R R/\fm_R$ to be the central fiber, $X_n$ the
  $n$th nilpotent thickening of $X_0$, and $\hat{X}$ the formal
  completion of $X$ along $X_0$.  Then for any coherent sheaf $F$ on
  $X$, there are natural isomorphisms:
    \[
      \mathrm{H}^i(X,F) \simeq \mathrm{H}^i(\hat{X}, \hat{F}) \simeq
      \limit_n \mathrm{H}^i(X_n, F|_{X_n}).
    \]
  \item \emph{Formal GAGA}\label{intro:properties:GAGA}: There are equivalences of categories
    \[
      \Coh(X) \iso \Coh(\widehat{X}) \iso \limit_n \Coh(X_n).
    \]
  \end{enumerate}
  These properties and their extension to proper algebraic stacks
  \cite{ref:conrad,ref:olsson} are powerful tools in modern algebraic
  geometry. For example, they can be used to answer lifting questions
  about schemes to characteristic zero, or to prove the proper base
  change theorem in \'{e}tale cohomology. In the context of moduli
  theory, Artin's Criteria \cite{artin-versal} implies that the
  effectivization of formal objects (e.g.\ curves, sheaves) is a
  necessary condition for a moduli stack to be algebraic. In practice,
  effectivization reduces to checking some incarnation of formal GAGA.

This paper investigates to what extent these three properties hold for (non-separated) algebraic stacks.  Of primary interest are quotient stacks of the form $[\Spec A/G]$, where the invariant ring $A^G$ is a complete local ring.  To formulate our results, we introduce the following three definitions mirroring the three properties above:

\begin{enumerate}[(1)]
  \item  A noetherian algebraic stack $\cX$ over a noetherian ring $R$ is \emph{cohomologically proper} if for every coherent sheaf $\cF$ on $\cX$, the cohomology group $\mathrm{H}^i(\cX, \cF)$ is a finitely generated $R$-module for all $i$.
\end{enumerate}
Let $\cX$ be a noetherian algebraic stack and $\cX_0 \subset \cX$ a
closed substack.  Let $\widehat{\cX}$ be the completion of $\cX$ along
$\cX_0$; i.e. the ringed site
$(\cX_{\text{lis-\'{e}t}}, \widehat{\cO}_{\cX,\cX_0})$, where
$\widehat{\cO}_{\cX,\cX_0} \coloneqq \varprojlim
\cO_{\cX}/\mathcal{I}^{n+1}$ (see \Cref{def:completions}).

  \begin{enumerate}[(1)]\setcounter{enumi}{1}
  \item We say that the pair $(\cX, \cX_0)$ satisfies \emph{formal functions} if for every coherent sheaf $\cF$ on $\cX$, the natural map
  $$\mathrm{H}^i(\cX, \cF) \to \mathrm{H}^i(\widehat{\cX}, \widehat{\cF})$$
  is an isomorphism for all $i$.
  \item We say that the pair $(\cX,\cX_0)$ is \textit{coherently complete} if
  the natural functor
  \[ \Coh(\cX) \to \Coh(\widehat{\cX})\]
  is an equivalence of categories.
\end{enumerate} 
See \Cref{S:cp,S:ff-cc} for a further discussion of these definitions as well as examples.

\subsection{Main conjecture and results}

\begin{conjecture} \label{conj:main}
  Let $G$ be a smooth geometrically reductive group scheme over a complete noetherian local ring $R$.  Let $X = \Spec A$ be an affine scheme of finite type over $R$ with an action of $G$ such that $A^G=R$. Let $x \in [X/G]$ be the unique closed point and $\cG_x$ be its residual gerbe. Then $[X/G]$ is cohomologically proper over $A^G$, and the pair $([X/G], \cG_x)$ is coherently complete and satisfies formal functions.
\end{conjecture}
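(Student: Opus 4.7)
The plan is to leverage Case~(1) of the paper (Artinian invariant ring) via an inverse-limit argument along infinitesimal thickenings of $\cG_x$, supplemented by a direct input from positive-characteristic invariant theory for the cohomology of geometrically reductive group actions. Let $\cI \subset \cO_{\cX}$ be the coherent ideal sheaf defining $\cG_x \into \cX = [\Spec A/G]$, and let $\cX_n$ denote the closed substack cut out by $\cI^{n+1}$. Since $\cX_n$ is a nilpotent thickening of the residual gerbe $\cG_x$, its invariant ring $\Gamma(\cX_n, \cO_{\cX_n})^G$ is Artinian local, so Case~(1) applies to each $\cX_n$ and gives that $\cX_n$ is coherently complete along $\cG_x$ and satisfies formal functions and cohomological properness relative to its invariant ring.

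For \emph{cohomological properness}, finite generation of $H^0(\cX, \cF) = \cF^G$ as an $R$-module is the classical Nagata/Haboush theorem for invariants of a geometrically reductive group scheme acting on a finitely generated module over a Noetherian base. For the higher cohomology $H^i(\cX, \cF)$ with $i > 0$, I would invoke cohomological finite generation for geometrically reductive group schemes in the spirit of Touz\'{e}--van der Kallen (which over a field shows that $H^*(G,-)$ preserves finite generation of algebras), extended to a Noetherian base by $\fm_R$-adic approximation together with faithfully flat descent. Once cohomological properness is in hand, \emph{formal functions} follows from the standard Mittag-Leffler and spectral-sequence comparison
\[
    H^i(\cX, \cF) \;\cong\; \varprojlim_n H^i(\cX_n, \cF|_{\cX_n}) \;\cong\; H^i(\hat{\cX}, \hat{\cF}),
\]
since the left-hand side is finite over $R$ and hence automatically $\fm_R$-adically complete, matching its inverse limit on the right. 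For \emph{coherent completeness}, given $\hat{\cF} = (\cF_n) \in \varprojlim \Coh(\cX_n)$, fully faithfulness of $\Coh(\cX) \to \Coh(\hat{\cX})$ follows from formal functions applied to sheaves of homomorphisms. The heart of the matter is essential surjectivity: setting $M := \varprojlim \Gamma(\cX_n, \cF_n)$ gives a finitely generated $\cI$-adically complete $\hat{A}$-module carrying a $G$-action, and one must algebraize $M$ to a finitely generated $A$-module carrying the $G$-action.

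The main obstacle will be precisely this algebraization step. For the trivial group such descent is impossible on a general affine scheme (coherent completeness fails for $\Spec A$ without additional hypotheses), so the $G$-equivariance together with completeness of $A^G = R$ must supply all of the rigidity. I would attempt to combine two ingredients: (i) finite generation of the $R$-module $M^G$ from the cohomological properness step applied to $\hat{\cX}$, giving a finitely generated pool of invariants from which to lift generators; and (ii) a Tannakian argument in the style of Bhatt--Halpern-Leistner and Hall--Rydh, viewing a formal coherent sheaf as a symmetric monoidal functor on coherent-sheaf categories and extending it to $\cX$ via pro-representability of an affine presentation. The genuine difficulty, and the reason the linearly reductive case has long been accessible while the geometrically reductive case remains open, is the failure of semisimplicity of $G$-representations in positive characteristic: isotypic components do not split off cleanly, so lifting $M^G$-generators to module generators of $M$ is non-constructive, and the Tannakian reconstruction must contend with non-split extensions of arbitrary rational $G$-representations along the filtration by $\cI^{n+1}$.
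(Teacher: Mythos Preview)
This statement is the paper's main \emph{conjecture}, not a theorem; the paper does not prove it in general. Two of its three claims are nonetheless established unconditionally in the paper: cohomological properness is \Cref{E:fund-is-cp}, and formal functions along $\cG_x$ is \Cref{cor:reductive-ff}. Only coherent completeness is open beyond the special cases of \Cref{thm:main}, and you correctly identify it as the crux while not claiming a proof.

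Two concrete gaps, however. First, your formal functions argument is incomplete: you invoke $\fm_R$-adic completeness of the finite $R$-module $H^i(\cX,\cF)$, but the $\cX_n$ are thickenings along the ideal $\cI$ of $\cG_x$, whose pullback to $\Spec A$ cuts out the unique closed \emph{orbit}---typically much smaller than the central fibre---so $\fm_R$-completeness says nothing about the $\cI$-adic comparison map. The paper's route instead shows that $\bigoplus_{n\ge 0}(I^n)^G$ is a finitely generated $A^G$-algebra (\Cref{E:fund-strong-cp}), forcing the two filtrations on cohomology to be commensurable; this is precisely the content of strong cohomological properness (\Cref{T:strong-cp-implies-ff}) and is where reductivity of $G$ enters. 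Second, your module-theoretic setup for coherent completeness is already problematic before the lifting step: writing $\cF_n$ as a $G$-equivariant $A/I^{n+1}$-module $N_n$, the inverse limit $\varprojlim_n N_n$ need not carry a \emph{rational} $G$-action, since a coaction is a map into $(-)\otimes_R\Gamma(G,\cO_G)$ and this tensor product does not commute with inverse limits when $\Gamma(G,\cO_G)$ is not finite over $R$; one only lands in a completed tensor product, which is a different category. (And if by $\Gamma(\cX_n,\cF_n)$ you meant the genuine stack sections $N_n^G$, those are $R$-modules with trivial $G$-action and cannot recover $\cF$.) The paper's method in the known cases is entirely different: it descends the three properties along $[X^+_\lambda/Z_\lambda]\to[X^+_\lambda/P_\lambda]\to[X/G]$ for a one-parameter subgroup $\lambda$ chosen to be regular in the stabilizer of its limit (\Cref{prop:refined-destabilization}), thereby reducing to quotients admitting good moduli spaces.
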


This conjecture differs from formal GAGA in two ways. First, when $G$ is not finite, the quotient stack $[\Spec A /G]$ is not separated, let alone proper, over $\Spec A^G$.  Second, in formal GAGA, the completion is taken with respect to the central fiber (i.e. the pullback of the maximal ideal from the base), whereas the completion in  \Cref{thm:main} is taken with respect to the \textit{smaller} closed substack defined by the residual gerbe at the unique closed point.  For instance, under the standard action of $\SL_2$ on  $\bA^2 = \spec k[x,y]$, the origin is the unique closed orbit and the invariant ring is $k[x,y]^{\SL_2} = k$.  Since the central fiber of $[\bA^2 / \SL_2] \to \Spec k$ is everything, the coherent completeness of $[\bA^2 / \SL_2]$ along the central fiber has no mathematical content.  On the other hand, the coherent completeness of $[\bA^2 / \SL_2]$ along the origin $B \SL_2$ is a non-trivial statement.

This conjecture was established in \cite[Thm.~1.6]{ref:ahr2} in the case that $G$ is linearly reductive.  While in characteristic $0$ the notions of linearly reductivity and reductivity agree, in positive characteristic linear reductivity is a very strong notion: a smooth affine group scheme is linearly reductive if it is an extension of a torus by a finite group whose order is prime to the characteristic \cite{MR142667}.  For example $\GL_n$ is reductive in any characteristic but linearly reductive only in characteristic 0.   

\begin{theorem} \label{thm:main}
  \Cref{conj:main} holds in the following cases
  \begin{enumerate}[(1)]
      \item \label{thm:main1} 
        $R$ is an artinian local ring, or 
      \item \label{thm:main2} 
        $G$ acts trivially on $X$, i.e. $A=R$ and $[X/G] = BG$.
  \end{enumerate}
\end{theorem}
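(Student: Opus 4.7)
The plan is to handle the two cases independently, with Case~(1) being the substantially more delicate case.

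For Case~(2), since $A = R$ and $G$ acts trivially, $[X/G] = BG_R$, and the residual gerbe at the unique closed point is $BG_k$ where $k = R/\fm_R$. A coherent sheaf on $BG_R$ is a finitely generated $R$-module equipped with a rational $G$-action, i.e., a comodule structure over $\cO(G)$, which is flat over $R$ by smoothness of $G$. Cohomological properness reduces to finite generation of the rational cohomology $H^i(G, M)$ for finite $R$-modules $M$ with $G$-action, a form of cohomological finite generation for (geometrically) reductive groups. Coherent completeness amounts to showing that a compatible system $\{M_n\}$ of finite $R_n$-modules with $G$-action (where $R_n = R/\fm_R^{n+1}$) assembles into a single finite $R$-module with $G$-action. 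Since $R$ is $\fm_R$-adically complete and the $M_n$ are finite, $M := \varprojlim M_n$ is finitely generated; the rational $G$-action on $M$ is then reconstructed by using flatness of $\cO(G)$ to identify $\varprojlim (M_n \otimes_{R_n} \cO(G)_n)$ with $M \otimes_R \cO(G)$. Formal functions then follows from the combination of cohomological properness and a Mittag--Leffler argument on the cohomology groups.

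For Case~(1), the plan is to proceed by induction on $N$ with $\fm_R^{N+1} = 0$, the base case being $R$ a field. For the inductive step, exploit the short exact sequence $0 \to \fm_R^N \cO_{\cX} \to \cO_{\cX} \to \cO_{\cX}/\fm_R^N \to 0$ together with foundational ascent/descent results for cohomological properness, formal functions, and coherent completeness across extensions (established earlier in the paper): the kernel is a coherent sheaf supported on the central fiber, handled by the base case, and the quotient is the structure sheaf of a stack over the smaller artinian ring $R/\fm_R^N$, handled by induction. It remains to establish the base case $R = k$: here $A$ is a finite-type $k$-algebra with $A^G = k$ and $\Spec A$ has a unique closed orbit, with the residual gerbe $\cG_x$ cut out by a $G$-invariant ideal $I \subset A$. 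The crucial step---and the main obstacle---is coherent completeness along $\cG_x$: in examples such as $\SL_2$ acting on $\bA^2$, the ring $A$ is not $I$-adically complete, so coherent completeness is the statement that $G$-equivariance forces finite $G$-equivariant $A$-modules to behave as if they were complete. I expect the argument to proceed by filtering a $G$-equivariant finite $A$-module $M$ by powers of $I$, analyzing the associated graded $\bigoplus_n I^n M / I^{n+1} M$ as a graded $G$-module over $\mathrm{gr}_I A$, and exploiting uniqueness of the closed orbit together with geometric reductivity to obtain a Mittag--Leffler property for the inverse system of $G$-invariants, yielding both fully faithfulness and essential surjectivity of the completion functor. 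Cohomological properness and formal functions in the base case would then follow from coherent completeness via a spectral sequence argument together with finite generation of cohomology on the formal completion.
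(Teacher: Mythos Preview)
Your proposal has genuine gaps in both cases, and in Case~(1) it misses the central idea entirely.

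For Case~(1), after reducing to $R=k$ you propose a direct attack via the $I$-adic filtration of a $G$-equivariant module, hoping that geometric reductivity and uniqueness of the closed orbit produce a Mittag--Leffler property on invariants. There is no known mechanism for this: the ring $A$ is genuinely not $I$-adically complete (e.g.\ $A=k[x,y]$, $I=(x,y)$ for $\SL_2$ on $\bA^2$), and associated-graded considerations do not by themselves force algebraization of formal $G$-equivariant modules. The paper's argument is completely different and geometric. After reducing to $k$ algebraically closed, $G$ connected, and $A$ an integral domain, it proves a \emph{refined Hilbert--Mumford theorem} (\Cref{prop:refined-destabilization}) producing a one-parameter subgroup $\lambda$ with $G\cdot X^+_\lambda=X$ and such that $\lambda$ is \emph{regular in the stabilizer of every limit point}. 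One then analyzes
\[
[X^+_\lambda/Z_\lambda]\xrightarrow{f}[X^+_\lambda/P_\lambda]\xrightarrow{g}[X/G],
\]
where $g$ is proper surjective and $f$ is faithfully flat with higher fiber products $[(X^+_\lambda\times U_\lambda^{n-1})/Z_\lambda]$. The regularity of $\lambda$ forces the closed points of these stacks to have \emph{linearly reductive} stabilizers (maximal tori), so they admit \emph{good} moduli spaces, where coherent completeness is already known. The descent theorem (\Cref{thm:descent}) then transfers the result to $[X/G]$. This entire architecture---destabilization, the parabolic/centralizer factorization, and the reduction to linearly reductive stabilizers---is absent from your plan. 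You also have the logic reversed: cohomological properness and formal functions are \emph{inputs} here, coming from \Cref{E:fund-is-cp} and \Cref{cor:reductive-ff}, not consequences of coherent completeness.

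For Case~(2), your argument for essential surjectivity has a gap: you claim to identify $\varprojlim(M_n\otimes_{R_n}\cO(G)_n)$ with $M\otimes_R\cO(G)$ using flatness, but $\cO(G)$ is not a finite $R$-module, so inverse limits do not commute with $-\otimes_R\cO(G)$, and it is not clear that the limit coaction lands in $M\otimes_R\cO(G)$ rather than its completion. The paper avoids this by a structural reduction: pass to split reductive $G$ by \'etale descent, then use $BT\to BB\to BG$, where $BB\to BG$ is proper (as $G/B$ is projective) and $BT\to BB$ is faithfully flat with higher fiber products $[U^{n-1}/T]$. Since $\Gamma(U)^T=R$, each $[U^{n-1}/T]\to\Spec R$ is a good moduli space, and descent (\Cref{thm:descent}) finishes.
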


We expect that these two cases will assist in establishing
\Cref{conj:main} in general.  Indeed, \eqref{thm:main1} reduces the
statement that $[X/G]$ is coherently complete and satisfies formal
functions with respect to the residual gerbe $\cG_x$ to seemingly
simpler statement that $[X/G]$ satisfies these properties with respect
to the central fiber of $[\Spec A/G] \to \Spec A^G$. Our methods essentially
reduce \Cref{conj:main} to the `generically toric' situation
(\Cref{R:generically-linearly-reductive}).

In fact, we establish a stronger result than \eqref{thm:main2}: if $G$ is a geometrically reductive group scheme over an $I$-adically complete noetherian ring $R$, then $BG$ is cohomologically proper over $R$, and the pair $(BG, B G_{R/I})$ is coherently complete and satisfies formal functions (\Cref{thm:reductive}).
When $G$ is the base change of a reductive group defined over a field, this had been proved in \cite[Prop.~4.3.4]{ref:hlp}.

It turns out that our results hold for a wider class than reductive group schemes:  they hold for graded unipotent groups, which are precisely the type of groups that arise in non-reductive GIT \cite{bdhk}.

\begin{theorem} \label{thm:positive-graded}
  Let $G \to \Spec R$ be a smooth affine group scheme over a complete local noetherian ring $R$ with residue field $k$. Let $G_u \subset G$ be its unipotent radical with reductive quotient $G_r = G/G_u$.  
  Let $\lambda \co \bG_m \to G$ be a one-parameter subgroup such that $\lambda$ is central in $G_r$ and acts positively on $G_u$.  Then
  \begin{enumerate}[(1)]
    \item \label{thm:positive-graded1}
      $BG$ is cohomologically proper over $R$, and $(BG, BG_k)$ is coherently complete and satisfies formal functions; and
    \item \label{thm:positive-graded2} if $G$ acts on an affine scheme
      $X = \Spec A$ of finite type over $R$ such that $A^G = R$,
      $[X/G_r]$ satisfies \Cref{conj:main} (e.g., $G_r$ is linearly
      reductive or $R$ is artinian) and $\lambda$ acts semipositively
      on $A$, then $[X/G]$ is cohomologically proper, and there is a
      unique closed point $x \in [X/G]$ such that $([X/G], \cG_x)$ is
      coherently complete and satisfies formal functions.
  \end{enumerate}
\end{theorem}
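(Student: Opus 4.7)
The strategy is to reduce from $G$ to its reductive quotient $G_r$ by peeling off unipotent layers, using the positive-weight action of $\lambda$ to keep cohomology under control at each step. Strict positivity of $\lambda$ on $\Lie(G_u)$ yields a $\bG_m$-stable filtration $1 = G_u^{(m)} \subset \cdots \subset G_u^{(0)} = G_u$ by normal subgroups of $G$, with successive quotients $V_i := G_u^{(i-1)}/G_u^{(i)} \cong \mathbf{G}_a^{n_i}$ of a single positive $\bG_m$-weight $w_i$ (the filtration is $G$-stable because $\lambda$ is central in $G_r$ and conjugation by $G_u$ is unipotent, hence preserves the weight filtration). Setting $G_i := G/G_u^{(i)}$ interpolates between $G_0 = G_r$ and $G_m = G$ via extensions $1 \to V_i \to G_i \to G_{i-1} \to 1$, which induce quotient morphisms $\pi_i \co BG_i \to BG_{i-1}$.

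For part (1), I would induct on $i$, with base case $BG_r = BG_0$ handled by \Cref{thm:reductive}. A coherent sheaf on $BG_i$ is a finite $R$-module $M$ with a $G_i$-comodule structure; the $\lambda$-action gives a weight decomposition $M = \bigoplus_n M_n$ with only finitely many nonzero summands. Restricting to $V_i$, the coaction $M \to M \otimes_R \cO(V_i)$ can only raise $\bG_m$-weights by positive multiples of $w_i$, and combined with the boundedness of the weight support this truncates the resulting higher-derivation structure after finitely many steps. A $\bG_m$-graded finite free resolution of the trivial $V_i$-module (Koszul- or bar-type, suitably twisted) then computes $\mathsf{R}\pi_{i,*} M$ as a bounded complex of coherent sheaves on $BG_{i-1}$. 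The Leray spectral sequence, together with compatibility of $\mathsf{R}\pi_{i,*}$ with $\fm_R$-adic completion along the central fibers, transports cohomological properness, formal functions, and coherent completeness from $BG_{i-1}$ to $BG_i$.

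For part (2), the plan is identical with $[X/G_i]$ in place of $BG_i$, the base case $[X/G_r]$ provided by the hypothesis that \Cref{conj:main} holds there. The unique closed point $x \in [X/G]$ is identified via a Hilbert--Mumford retraction: semipositivity of $\lambda$ on $A$ makes $\lim_{t \to 0} \lambda(t) \cdot (-)$ into a well-defined morphism $X \to X^{\bG_m}$, and $A^G = R$ forces the unique closed $G$-orbit to lie in the image of $\Spec R \hookrightarrow X^{\bG_m}$. The inductive step runs as in part (1) because semipositive weights on $A$ and strictly positive weights on $V_i$ together bound the $\bG_m$-weights of any coherent $G_i$-equivariant $\cO_X$-module, yielding the same bounded Koszul-type calculation for $\pi_i \co [X/G_i] \to [X/G_{i-1}]$.

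\textbf{The main obstacle} is verifying that $\mathsf{R}\pi_{i,*}$ actually lands in bounded coherent complexes and commutes with $\fm_R$-adic completion. In positive characteristic the naive cohomology $H^*(V_i,-)$ of a coherent comodule can \emph{a priori} be infinite-dimensional (e.g.\ $H^1(\mathbf{G}_a, k)$ is infinite over a field $k$ of characteristic $p$), so the positive-weight hypothesis on $\lambda$ is genuinely essential: it is precisely what finitely truncates the $V_i$-coaction on any coherent $M$ and yields a term-wise finite resolution whose pushforward is coherent. Once this finiteness-plus-base-change statement is in place, ascent of the three properties along each $\pi_i$ becomes a formal consequence of Leray spectral sequences, the inductive hypothesis, and \Cref{thm:reductive}.
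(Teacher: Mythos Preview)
Your approach is fundamentally different from the paper's, and it has a genuine gap at exactly the point you flag as the ``main obstacle.''

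The paper does \emph{not} push forward along $BG \to BG_r$. Instead it goes the other way: the splitting $G_r \hookrightarrow G$ gives a faithfully flat morphism $BG_r \to BG$, whose \v{C}ech nerve has $n$-th stage $[G_u^{\,n-1}/G_r]$. The positive-grading hypothesis makes each of these a good moduli space over $\Spec R$ (via \Cref{lem:positivity-under-products} and \Cref{ex:lambda-unipotents-are-positively-graded}), so the descent theorem (\Cref{thm:descent}) transfers cohomological properness, formal functions, and coherent completeness from these stacks to $BG$. Part~(2) is proved the same way with $[\Spec A/G_r] \to [\Spec A/G]$.

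Your ascent strategy fails because the key claim---that $\RDERF\pi_{i,*}M$ lands in bounded coherent complexes on $BG_{i-1}$---is false in positive characteristic. Take $G = \bG_a \rtimes \bG_m$ over a field $k$ of characteristic $p$, with $\lambda = \id$, and let $M = k$ be trivial. Then $\RDERF^1\pi_* k = \mathrm{H}^1(\bG_a,k)$ as a $\bG_m$-module, and this has one-dimensional pieces in weights $1, p, p^2, \ldots$ (coming from the Frobenius twists $x \mapsto x^{p^j}$); it is not coherent on $B\bG_m$. Your observation that positive weights truncate the \emph{coaction} on $M$ is correct, but a polynomial $\bG_a$-coaction still has infinite-dimensional Hochschild cohomology in characteristic $p$: truncating the input does not truncate the bar complex. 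There is no Koszul-type finite resolution of the trivial $\bG_a$-module by finite $R$-modules with $\bG_a$-action in characteristic $p$.

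One could try to repair this by working directly with the $G_{i-1}$-invariants of $\RDERF\pi_{i,*}M$ rather than $\RDERF\pi_{i,*}M$ itself (in the example above, the weight-$0$ piece of $\mathrm{H}^*(\bG_a,k)$ is indeed just $k$), but this would require a new argument and would not obviously handle the coherent-completeness claim, since ascent of coherent completeness along the non-proper, non-representable $\pi_i$ is not covered by \Cref{thm:properness-ascent} or anything else in the paper. The paper's descent approach avoids both difficulties simultaneously.
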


For example, if $B \subset G$ is a Borel subgroup of a split reductive group scheme over $R$, then $B = P_{\lambda}$ for a regular one-parameter subgroup $\lambda \co \bG_m \to G$.  The reductive quotient $B_r=T$ of $B$ is a maximal torus (thus linearly reductive) such that $\lambda$ is central.  The theorem implies that $BB$ is cohomologically proper and that $(BB, BB_k)$ is coherently complete, and moreover that the same holds for a quotient stack $[\Spec A/G]$ if $A^B=R$ and $\lambda$ acts semipositively on $A$.  

On the other hand, these three cohomological properties do not hold for every algebraic group $G$, e.g. unipotent groups (see \Cref{ex:unipotent}).

\subsection{Methods and other results}
Our strategy to establish \Cref{thm:main} is via descent.  We prove that if $f \co \cX \to \cY$ is a universally submersive morphism and $\cY_0 \subset \cY$ is a closed substack such that $\cX$ and the higher base changes $\cX \times_{\cY} \cdots \times_{\cY} \cX$ satisfy the three cohomological properties---cohomologically properness, formal functions, and coherent completeness---along the preimage of $\cY_0$, then $\cY$ satisfies these properties along $\cY_0$ (\Cref{thm:descent}).  The two most important cases are either when $f$ is proper and surjective, in which case one only needs to check that $\cX$ has the properties along the preimage of $\cY_0$, and when $f$ is faithfully flat.

To apply the descent result, we construct suitable covers of $[\Spec A/G]$.  For the case of $BG_R$ for a reductive group $G$ over a complete local noetherian ring $R$ (i.e. \Cref{thm:main}\eqref{thm:main2}), we first reduce to the case that $G$ is split reductive.  We consider subgroups $T \subset B \subset G$ where $T$ is a maximal torus and $B$ is a Borel subgroup, and consider the induced morphisms
$$BT \to BB \to BG.$$
Since $G/B$ is projective, the map $BB \to BG$ is proper and surjective, and hence \Cref{thm:descent} reduces the theorem to $BB$.  On the other hand, $BT \to BB$ is faithfully flat and affine.  Since $T$ is a torus and hence linearly reductive, we know the cohomological properties for $BT$.  Moreover, the higher base changes of $BT \to BB$ are identified with quotient stacks $[U^{\times (n-1)} / T]$ where $U$ is the quotient $B/T$ of the right action, and $T$ acts on $U$ on the left.  The $T$-invariants of the action on the $(n-1)$-fold product $U^{\times (n-1)}$ are identified with $U$, which implies that $[U^{\times (n-1)} / T] \to \Spec R$ is a good moduli space and thus satisfies the three cohomological properties.  \Cref{thm:descent} then implies that $BB$ satisfies the desired properties.

The construction of the covers in the case of $[\Spec A/G]$ with $A^G$ artinian (i.e., \Cref{thm:main}\eqref{thm:main1}) is more involved.  The general strategy is to find a one-parameter subgroup $\lambda \co \bG_m \to G$ and consider the
pair of morphisms
$$[X^+_{\lambda}/C_{\lambda}] \to [X^+_{\lambda}/P_{\lambda}] \to [X/G],$$
where $X^+_{\lambda} = {\rm Hom}^{\bG_m}(\bA^1, X)$ is the `attractor' locus  parameterizing points of $X$ that have a limit under $\bG_m$-action induced by $\lambda$ (see \S \ref{sec:fixed-attractor}), and 
where $C_{\lambda}$ and $P_{\lambda}$ are the centralizer and parabolic of $\lambda$ (see \S\ref{subsec:centralizer-parabolic}).  For any $\lambda$, the map $[X^+_{\lambda}/P_{\lambda}] \to [X/G]$ is proper since $X^+_{\lambda} \subset X$ is a closed subscheme and $G/P_{\lambda}$ is projective, while the map $[X^+_{\lambda}/C_{\lambda}] \to [X^+_{\lambda}/P_{\lambda}]$ is faithfully flat and affine.  A version of the Hilbert--Mumford criterion (\Cref{P:generic-destabilization}) implies that there is one-parameter subgroup $\lambda$ destabilizing at least one point in every $G$-orbit, which gives that $G \cdot X^+_{\lambda} = X$ or in other words that $[X^+_{\lambda}/P_{\lambda}] \to [X/G]$ is surjective.  The descent result (\Cref{thm:descent}) therefore reduces to the claim to $[X^+_{\lambda}/P_{\lambda}]$.  However, $[X^+_{\lambda}/C_{\lambda}]$ (in addition to the higher base changes of $[X^+_{\lambda}/C_{\lambda}] \to [X^+_{\lambda}/P_{\lambda}]$) do not clearly satisfy the cohomological properties as $C_{\lambda}$, while reductive, is not linearly reductive.  However, we establish a refined version of the Hilbert--Mumford criterion (\Cref{prop:refined-destabilization}) that yields a one-parameter subgroup $\lambda$ such that every $G$-orbit contains a point $x$ and such that $x_0 := \lim_{t \to 0} \lambda(t) \cdot x$ exists and lies in the unique closed $G$-orbit, and moreover such that the induced one-parameter subgroup is \emph{regular} in the stabilizer $G_{x_0}$.  This implies that the connected component of $G_{x_0} \cap C_{\lambda}$ is linearly reductive, which suffices to show that $[X^+_{\lambda}/C_{\lambda}]$ and the higher base changes satisfy the desired cohomological properties. 

We also establish a number of other foundational results, such as:
\begin{itemize}
  \item every \emph{strongly cohomologically proper} morphism (see \Cref{D:strong-cp}) satisfies formal functions (\Cref{T:strong-cp-implies-ff}); and
  \item coherent completeness ascends along proper and representable morphisms:  if $f\co \cX \to \cY$ is a proper representable morphism and $(\cY, \cY_0)$ is coherently complete, then $(\cX, f^{-1}(\cY_0))$ is coherently complete (\Cref{thm:properness-ascent}).
\end{itemize}

\subsection{Motivation and applications}
The motivation for this paper is to extend the local structure theorem of \cite[Thm.~1.1]{ref:ahr} to apply more widely in positive characteristic.  Specifically, \cite[Thm.~1.1]{ref:ahr} implies that a quasi-separated algebraic stack of finite type over an algebraically closed field with affine stabilizers is \'etale locally a quotient stack of the form $[\Spec A/\GL_n]$ near a point $x$ with linearly reductive stabilizer $G_x$; various generalizations of this were established in \cite{ref:ahr2} and \cite{ref:ahhlr}.  The long-term goal of this project is to prove an analogous result near points with reductive stabilizer. 
Such a local structure theorem would be a powerful foundational result in stack theory with wide-ranging applications. 
Most notably, it would immediately extend the existence theorems for moduli spaces proved in \cite[Thm.~A]{ref:ahlh} to positive characteristic.  Namely, it would imply that in any characteristic, an algebraic stack $\cX$ of finite type with affine diagonal admits a separated adequate moduli space if and only $\cX$ is $\Theta$-complete and $\mathsf{S}$-complete.

Coherent completeness plays an essential role in the proof of \cite[Thm.~1.1]{ref:ahr}.  The main idea of the proof in the case that $x \in \cX$ is a smooth point is to consider the quotient $\cT = [T_{\cX,x}/G_x]$ of the Zariski tangent space, and the base change 
$$\hat{\cT} := \cT \times_{T_{\cX,x}\gitq G_x} \Spec \hat{\oh}_{T_{\cX,x}\gitq G_x, 0}$$
along the completion of the GIT quotient at the image of the origin.   Since $\hat{\cT}$ is coherently complete along $BG_x$ by the linearly reductive case of \Cref{conj:main} (i.e. \cite[Thm.~1.6]{ref:ahr2}), Tannaka duality \cite{ref:HR-tannaka} implies that 
$$\Hom(\hat{\cT}, \cX) \cong \varprojlim \Hom(\cT_n, \cX).$$
Using again that $G_x$ is linearly reductive, deformation theory yields isomorphisms $\cT_n \cong \cX_n$ of the $n$th nilpotent thickening of $0 \in \cT$ and $x \in \cX$.  The identification above implies that the maps $\cT_n \iso \cX_n \into \cX$ extend to a map $\hat{\cT} \to \cX$, which can then be approximated to produce the desired \'etale neighborhoods.  In the reductive case, \Cref{conj:main} would yield the coherent completeness of $\hat{\cT}$ and be a big step in the direction of a local structure theorem.  However, even with \Cref{conj:main} resolved, deformation theory would remain an additional obstruction to establish a local structure theorem in the reductive case.

\subsection{Comparison to the literature}
The paper \cite{ref:hlp} discusses related cohomological properties of stacks and establishes a number of related results.  The authors introduce the notion of formal properness (see \Cref{rmk:formal-properness}) and establish for instance that cohomologically projective morphisms \cite[Def.~4.1.3]{ref:hlp} are formally proper \cite[Thm.~4.2.1]{ref:hlp}, which is a variant of \Cref{thm:properness-ascent} (see \Cref{rmk:hlp-properness-ascent}).  They also prove \Cref{thm:main}\eqref{thm:main2} in the case that $G$ is the base change of a reductive group over a field \cite[Prop.~4.3.4]{ref:hlp}. 

One major difference between our paper and \cite{ref:hlp} is that we are concerned with establishing coherent completeness of $[\Spec A/G]$ along the residual gerbe of the unique closed point. In contrast, \cite{ref:hlp} focuses on coherent completeness results of stacks such as $[\Spec A/G]$ along the central fiber of a morphism $[\Spec A/G] \to \Spec A^G$.  In this sense, our results are substantially stronger than \cite{ref:hlp} and are necessary for the intended applications to local structure theorems.  Another feature of our paper is that we utilize only classical methods and avoid derived algebraic geometry.

\subsection{Acknowledgements} We would like to thank Dmitry Kubrak,
Zev Rosengarten, Sean Cotner, Bogdan Zavyalov, Brian Conrad, Jochen
Heinloth, Dan Halpern-Leistner, and Elden Elmanto for very helpful
conversations.

\subsection{Conventions}
We follow the conventions of \cite{ref:stacks}. In particular, if
$\cX$ is an algebraic stack, the lisse-\'etale site $\cX_{\lisset}$
denotes the category with objects the morphisms
$U \xrightarrow{p} \cX$, where $U$ is a scheme and $p$ is smooth;
coverings of $U \to \cX$ are given by \'etale coverings of $U$
\cite[Tag \spref{0787}]{ref:stacks}.

\section{Cohomological properness} \label{S:cp} For a morphism $f \colon X \to Y$ of
noetherian schemes (even algebraic spaces or algebraic stacks with
finite diagonal), it is known that $f$ is proper
if and only if the higher pushforwards $\RDERF^i f_*$ send coherent
sheaves to coherent sheaves for all $i\geq 0$ \cite{182902}.  For
algebraic stacks with infinite stabilizers, there does not exist such
a comparatively simple characterization. It was noted, however, in
\cite[Def.~2.4.1]{ref:hlp} that the higher pushforwards of a morphism
preserving coherent sheaves was a useful condition and satisfied for
various interesting types of algebraic stacks. In this vein, we make
the following definition.
\begin{definition}\label{D:cp}
  \label{def:cp} Let $f \co \cX \to \cY$ be a finite type morphism 
  of noetherian algebraic stacks. We say that $f$ is:
  \begin{enumerate}
  \item \emph{cohomologically proper} if for every coherent sheaf
    $\cF$ on $\cX$, $\RDERF f_\ast \cF \in D^+_{\coh}(\cY)$; and
  \item \emph{universally cohomologically proper} if for every
    morphism $\cY' \to \cY$ of noetherian algebraic stacks, the base
    change $\cX \times_\cY \cY' \to \cY'$ is cohomologically proper.
  \end{enumerate}
\end{definition}
\begin{remark} 
  Universal cohomological properness is equivalent to the
  \emph{coherent pushforward} property introduced in
  \cite[Def.~2.4.1]{ref:hlp}.
\end{remark}
\begin{remark}\label{R:check-cp-on-affines}
  In \Cref{def:cp}, by descent, it suffices to check cohomological
  properness on a smooth cover of the target. In particular, universal
  cohomological properness can be verified by only base changing to
  affine schemes.
\end{remark}
\begin{remark} \label{rmk:cp-equivalence}
  If $f \colon \cX \to \cY$ is cohomologically proper and $\cF \in D^+_{\coh}(\cX)$, then  $\RDERF f_* \cF  \in D^+_{\coh}(\cY)$.
  This follows from the convergent hypercohomology spectral sequence:
  \[ \RDERF^i f_\ast \mathscr{H}^j(\cF) \Rightarrow
    \mathscr{H}^{i+j}(\RDERF f_\ast \cF).  \]
\end{remark}
\begin{remark}\label{R:ucp-uc}
  If $f \colon \cX \to \cY$ is universally cohomologically proper,
  then it is universally closed (e.g., \cite{182902} and \cite[Prop.\
  2.4.5]{ref:hlp}). The basic idea is to apply to the valuative
  criterion for universal closedness \cite[Tag
  \spref{0CLV}]{ref:stacks}, so we are reduced to the situation where
  $\cY= \Spec D$, where $(D,\mathfrak{m})$ is a DVR with fraction
  field $K$ such that $f_K \colon \cX_K \to \Spec K$ has a section
  $s$. We must show that, potentially after extending $D$, that $s$
  extends to a section over $\Spec D$. If
  $p\in f^{-1}(\mathfrak{m}) \neq \emptyset$, then we can find a DVR
  $D'$ over $D$ such that $\Spec D' \to \cX$ realizes the
  specialization from the image of $s$ to $p$. In particular, we must
  show that $f^{-1}(\mathfrak{m}) = \emptyset$ cannot occur if $f$ is
  cohomologically proper. If $f^{-1}(\mathfrak{m}) = \emptyset$, then
  $f$ factors through $j \colon \Spec K \to \Spec R$. It follows that
  $\mathrm{H}^0(\cX,\cO_{\cX})$ is a non-zero finite dimensional
  $K$-vector space, so it cannot be a finitely generated $R$-module,
  which contradicts the cohomological properness of $f$. This proves
  the claim.
\end{remark}
The following two examples will be key in this article. 
\begin{example}\label{E:proper-is-cp}
  Let $f\colon \cX \to \cY$ be a proper morphism of noetherian
  algebraic stacks. Then $f$ is universally cohomologically
  proper. Since properness is stable under arbitrary base change, it
  suffices to prove that proper morphisms are cohomologically proper,
  which follows from \cite{ref:olsson}.
\end{example}
\begin{example}\label{E:fund-is-cp}
  Let $R$ be a noetherian ring. Let $A$ be a finitely generated
  $R$-algebra with an action of a reductive group scheme $G$ over
  $\Spec R$. Then $\pi\colon [\Spec A/G] \to \Spec (A^G)$ is
  universally cohomologically proper. Indeed, 
  by
  \Cref{R:check-cp-on-affines}, it suffices to check that if $B$ is a
  noetherian $A^G$-algebra, then
  $\pi_B \colon [\spec (A \otimes_{A^G} B)/G_B] \to \spec (B)$ is
  cohomologically proper. But $A \otimes_{A^G} B$ is a finitely
  generated $B$-algebra and the result follows from \cite[Thm.\
  10.5]{MR3525842}.
\end{example}

\section{Formal functions and coherent completeness}\label{S:ff-cc}

Throughout we use the following notation:  if $\cX$ is a noetherian algebraic stack and $\cX_0 \subset \cX$ is a closed substack defined by a coherent sheaf of ideals $\mathcal{I} \subset \cO_\cX$, then we denote by $\cX_n \subset \cX$  the closed substack defined by $\cI^{n+1}$.  If $\cF$ is a quasi-coherent sheaf on $\cX$, we denote by $\cF_n$ its pullback to $\cX_n$.

\begin{definition}
  \label{def:completions}
  Let $\cX$ be a noetherian algebraic stack and $\cX_0 \subset \cX$ be a closed substack defined by a coherent sheaf of ideals $\mathcal{I} \subset \cO_\cX$. The \emph{completion of the pair $(\cX, \cX_0)$} is the ringed site
  $\widehat{\cX} \coloneqq (\cX_{\text{lis-\'{e}t}}, \widehat{\cO}_{\cX,\cX_0})$, where the sheaf of rings is defined by the limit
  \[ 
    \widehat{\cO}_{\cX,\cX_0} \coloneqq 
      \varprojlim \cO_{\cX}/\mathcal{I}^{n+1}
  \]
  in the category of lisse-\'{e}tale modules.
\end{definition}

There is a canonical morphism of ringed sites
$c_{\cX,\cX_0}\colon \widehat{\cX} \to \cX$, which is flat
\cite[Lem.~3.3]{ref:gerdzb} and whose pullback preserves coherence; we
will often suppress the subscript by writing
$c_{\cX} \co \widehat{\cX} \to \cX$ or simply
$c\co \widehat{\cX} \to \cX$ if there is little possibility for
confusion. For a coherent $\cO_{\cX}$-module $\cF$, we denote by
$\hat{\cF}$ the pullback of $\cF$ to $\hat{\cX}$. There is a natural
identification $\hat{\cF} \cong \limit \cF/\mathcal{I}^{n+1}\cF$.
There is also an exact equivalence of abelian categories
$\Coh(\hat{\cX}) \simeq \limit \Coh(\hat{\cX}_n)$ \cite[Thm.\
2.3]{ref:conrad}.

\begin{definition}
  \label{def:ff-cc}
  Let $\cX$ be a noetherian algebraic stack and $\cX_0 \subset \cX$ be a closed substack. 
  \begin{enumerate}[(1)]
  \item $(\cX,\cX_0)$ satisfies \textit{formal functions} if for any coherent sheaf $\cF$ on $\cX$, the natural map:
      \[
        \mathrm{H}^i(\cX,\cF) \to \mathrm{H}^i(\hat{\cX},\hat{\cF}) 
      \]
    is an isomorphism for all $i\geq 0$;
    \item $(\cX,\cX_0)$ is \textit{coherently complete} if the functor:
      \[ 
      \Coh(\cX) \to \Coh(\widehat{\cX})\simeq \limit \Coh(\hat{\cX}_n)
      \]
    is an equivalence.
  \item $(\cX,\cX_0)$ satisfies \emph{derived formal functions} if the functor:
    \[
     c^* \colon D_{\coh}(\cX) \to D_{\coh}(\hat{\cX})
    \]
    is fully faithful.
  \item $(\cX,\cX_0)$ is \emph{derived coherently complete} if the functor:
    \[
      c^* \colon D_{\coh}(\cX) \to D_{\coh}(\hat{\cX})
    \]
    is an equivalence. 
  \end{enumerate}
\end{definition}

Examples are littered throughout the next few sections, with several
in \Cref{sec:permanence}.

\begin{remark}\label{R:formal-fns}
  The traditional formulation of ``formal functions'' in the
  literature is that
  $\mathrm{H}^i(\cX,\cF) \simeq \varprojlim_n \mathrm{H}^i(\cX_n,\cF_n)$ for all
  $i\geq 0$ and coherent sheaves $\cF$ on $\cX$. We will refer to this
  as \emph{Zariski formal functions}. If $\cF \in \Coh(\cX)$, then
  $\hat{\cF} = \holim{n} \cF_n$ in $D(\cX)$ \cite[Tag
  \spref{0A0K}]{ref:stacks}. In particular,
  $\RDERF \Gamma(\hat{\cX},\hat{\cF}) \simeq \holim{n} \RDERF
  \Gamma(\cX_n,\cF_n)$. The Milnor exact sequence \cite[Tag
  \spref{07KZ}]{ref:stacks} implies that for each $i$ there is a short
  exact sequence:
  \begin{equation}
    \xymatrix{0 \ar[r] & \varprojlim^1_n \mathrm{H}^{i-1}(\cX_n,\cF_n) \ar[r] &
      \mathrm{H}^i(\hat{\cX},\hat{\cF}) \ar[r] & \varprojlim_n \mathrm{H}^i(\cX_n,\cF_n)
      \ar[r] & 0.}\label{eq:milnor-sequence}
  \end{equation}
  It follows that formal functions implies that
  $\mathrm{H}^i(\cX,\cF) \twoheadrightarrow \varprojlim_n
  \mathrm{H}^i(\cX_n,\cF_n)$ for all $i$ and $\cF$. Conversely,
  Zariski formal functions implies that
  $\mathrm{H}^i(\cX,\cF) \subseteq
  \mathrm{H}^i(\hat{\cX},\hat{\cF})$. Hence, these conditions are not
  obviously equivalent in the generality that we are working. They are
  when $i=0$ or when $\cX$ has affine diagonal, however. This follows from an
  identical argument to that provided in
  \cite[Cor.~V.2.20]{ref:knutson} for separated algebraic
  spaces.
\end{remark}
\begin{remark}[Formal properness] \label{rmk:formal-properness} In
  \cite[Defn.~1.1.3]{ref:hlp}, the property of \emph{formal
    properness} is introduced, which is related to the above
  definition of derived coherent completeness.  A pair $(\cX, \cX_0)$
  consisting of an algebraic stack $\cX$ and a cocompact closed
  substack $\cX_0$ is called \emph{complete} if
  $\APerf(\cX) \iso \APerf(\hat{\cX})$ is an equivalence.  A morphism
  $f \co \cX \to \cY$ of algebraic stacks is \emph{formally proper} if
  for every complete pair $(\cY, \cY'_0)$, the base change
  $\cX \times_{\cY} \cY'$ is complete with respect to
  $\cX \times_{\cY} \cY'_0$.

  If $\cX$ is a noetherian algebraic stack over a complete noetherian
  local ring $\Spec R$ and $x \in \cX$ is a closed point, then the
  conditions of $(\cX, \cG_x)$ being (derived) coherently complete and
  the morphism $\cX \to \Spec R$ being formally proper are
  incomparable.  On one hand, coherent completeness concerns the
  closed substack $\cG_x$ rather than the central fiber
  $\cX \otimes_R R/\fm_R$ while formal properness includes a
  completeness property with respect to all base changes.
\end{remark}

If the functor $\Coh(\cX) \to \Coh(\hat{\cX})$ is fully faithful (e.g.
$(\cX,\cX_0)$ is coherently complete), then for any coherent sheaf
$\cF$ on $\cX$, the natural map
\[
  \mathrm{H}^0(\cX, \cF) \to \mathrm{H}^0(\hat{\cX}, \hat{\cF})
\]
is an isomorphism. Additional hypotheses are needed to imply formal
functions, i.e. that the comparison maps on higher cohomology are
isomorphisms.  On the other hand, if
$D_{\coh}(\cX) \to D_{\coh}(\hat{\cX})$ is fully faithful (i.e.
$(\cX, \cX_0)$ satisfies derived formal functions), then
\[
  \mathrm{H}^i(\cX, \cF) \to \mathrm{H}^i(\hat{\cX}, \hat{\cF}) 
\]
is an isomorphism for all $i$ (i.e. $(\cX, \cX_0)$ satisfies formal
functions).  The proposition below implies in fact that formal
functions is equivalent to derived formal functions, and that derived
coherent completeness is equivalent to coherent completeness and
formal functions.

Let $\cX$ be a noetherian algebraic stack and $\cX_0 \subset \cX$ a
closed substack. The morphism of ringed sites
$c \colon \hat{\cX} \to \cX$ induces an adjoint pair on unbounded
derived categories:
\[
  c^*_{\cX,\cX_0} \colon D_{\qcoh}(\cX) \leftrightarrows D(\hat{\cX}) \colon c_{\cX,\cX_0,{\qcoh},*}.
\]
The functor $c_{\cX,\cX_0,\qcoh,*}$ coincides with the composition of the
forgetful functor \[{c_{\cX,\cX_0,*} \colon D(\hat{\cX}) \to D(\cX)}\] and the
\emph{quasi-coherator}, which is the right adjoint to the inclusion
$D_{\qcoh}(\cX) \to D(\cX)$. See \cite[\S4]{ref:hall} for more
details. We have the following simple proposition. 
\begin{proposition}\label{P:underived-derived-comparison}
  Let $\cX$ be a noetherian algebraic stack and $\cX_0 \subset \cX$ a
  closed substack.
 \begin{enumerate}[{\rm(1)}]
 \item \label{PI:underived-derived-comparison:ff}$(\cX,\cX_0)$
   satisfies formal functions if and only if it satisfies derived
   formal functions. Moreover, if $\cG \in D_{\Coh}(\cX)$, then
   $\cG \to c_{\qcoh,*}c^*\cG$ is an isomorphism.
  \item \label{PI:underived-derived-comparison:cc} 
    If $(\cX,\cX_0)$ satisfies formal functions, then it is
    coherently complete if and only if it is derived coherently
    complete. In this case,
    \begin{enumerate}[{\rm(a)}]
    \item\label{PI:underived-derived-comparison:cc:t-exact}
      $c_{{\qcoh},*}$ is $t$-exact on $D_{\coh}(\hat{\cX})$; and
    \item \label{PI:underived-derived-comparison:cc:qcoh} if
      $\mathfrak{F} \in D_{\coh}(\hat{\cX})$, then
      $c_{{\qcoh},*}\mathfrak{F} \in D_{\coh}(\cX)$ and
      $c^*c_{{\qcoh},*}\mathfrak{F} \simeq \mathfrak{F}$.
    \end{enumerate}
  \end{enumerate}
\end{proposition}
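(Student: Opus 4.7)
The plan is to organize the entire argument around the adjunction $c^* \dashv c_{\qcoh,*}$, exploiting that $c \colon \hat{\cX} \to \cX$ is flat (\cite[Lem.~3.3]{ref:gerdzb}) so that $c^*$ is exact, preserves coherence, and restricts to $c^* \colon D_{\coh}(\cX) \to D_{\coh}(\hat{\cX})$. For part (1), I would begin with the purely formal observation that $c^*$ is fully faithful on $D_{\coh}(\cX)$ if and only if the unit $\cG \to c_{\qcoh,*}c^*\cG$ is an isomorphism for every $\cG \in D_{\coh}(\cX)$. This reads off the ``moreover'' clause directly from either of the equivalent conditions. The easy direction (derived formal functions implies formal functions) then follows by specializing the full-faithfulness isomorphism to $\cH = \cO_\cX$ and $\cG = \cF \in \Coh(\cX)$, producing $\RDERF\Gamma(\cX,\cF) \iso \RDERF\Gamma(\hat{\cX},\hat{\cF})$ directly. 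For the converse, I would run a d\'evissage: the truncation triangles $\tau^{\leq n}\cG \to \cG \to \tau^{>n}\cG$, combined with exactness of $c^*$, reduce the unit being an isomorphism to the case $\cG \in \Coh(\cX)$. For such $\cG$, one tests against $\RDERF\Hom_\cX(\cH,-)$ for $\cH \in \Coh(\cX)$, using flatness of $c$ to commute $c^*$ with $\RDERF\mathcal{H}om(\cH,-)$ after locally resolving $\cH$ by finite-rank locally free sheaves; this reduces the required $\Ext$-level isomorphism to formal functions applied to the coherent sheaves built from $\cG$ and from the local $\RDERF\mathcal{H}om$'s against $\cH$.

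For part (2), I would assume formal functions, so that by (1) the pullback $c^*$ is already fully faithful on $D_{\coh}$, and then upgrade coherent completeness to derived coherent completeness by a Postnikov-tower argument. Coherent completeness says $c^*$ is essentially surjective on the coherent hearts; combined with full faithfulness and $t$-exactness of $c^*$, one reconstructs any $\mathfrak{F} \in D_{\coh}(\hat{\cX})$ from its cohomology sheaves $\mathcal{H}^i(\mathfrak{F})$---each in the essential image of $c^*$ by coherent completeness---by iterating the extension/cone construction and lifting the connecting morphisms via full faithfulness. The reverse direction is immediate: if $c^*$ is an equivalence on $D_{\coh}$ and $t$-exact, then its restriction to hearts is an equivalence. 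Claims (a) and (b) follow by identifying the quasi-inverse of the equivalence $c^*$ with $c_{\qcoh,*}|_{D_{\coh}(\hat{\cX})}$ (by uniqueness of adjoints), which simultaneously yields the $t$-exactness in (a), the containment $c_{\qcoh,*}\mathfrak{F} \in D_{\coh}(\cX)$ in (b), and the counit isomorphism $c^* c_{\qcoh,*}\mathfrak{F} \simeq \mathfrak{F}$.

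The main obstacle I anticipate is the d\'evissage step of Part (1): promoting formal functions---a statement about \emph{global} cohomology of coherent sheaves---to a genuine complex-level isomorphism $\cG \to c_{\qcoh,*}c^*\cG$ in $D_{\qcoh}(\cX)$. This requires careful control of the interaction of $c^*$ with $\RDERF\mathcal{H}om$ and ultimately rests on local flat presentations of coherent sheaves, so that $c^*$ of $\RDERF\mathcal{H}om(\cH,\cG)$ can be computed before completion. Once this is secured, the remainder of the proposition is formal adjunction bookkeeping paired with the standard triangulated d\'evissage.
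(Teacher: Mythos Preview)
Your approach is essentially the same as the paper's: both reduce derived formal functions to formal functions via the unit $\cG \to c_{\qcoh,*}c^*\cG$, establish the $\RDERF\mathscr{H}om$ compatibility using flatness of $c$ and coherence of the source, and then run a Postnikov-style d\'evissage for part (2). Two points deserve sharpening. First, your reduction ``truncation triangles reduce to $\cG \in \Coh(\cX)$'' is only immediate for \emph{bounded} complexes; for unbounded $\cG \in D_{\coh}(\cX)$ you must justify that the unit is compatible with the relevant homotopy limits, and the paper does this via a separate lemma showing $\mathfrak{F} \simeq \holim{n}\tau^{\geq -n}\mathfrak{F}$ in $D_{\coh}(\hat{\cX})$ (a Serre-vanishing input on the formal side), then exploits that $c_{\qcoh,*}$, as a right adjoint, preserves holims. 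The same issue recurs in part (2) when extending essential surjectivity beyond $D^b_{\coh}(\hat{\cX})$. Second, your phrase ``locally resolving $\cH$ by finite-rank locally free sheaves'' must be read as \emph{smooth-locally} (on an affine cover in the lisse-\'etale site), since $\cX$ is not assumed to have the resolution property; the paper sidesteps this phrasing by invoking directly that $c^*\RDERF\mathscr{H}om_{\cO_\cX}(\mathcal{M},\mathcal{N}) \simeq \RDERF\mathscr{H}om_{\cO_{\hat{\cX}}}(c^*\mathcal{M},c^*\mathcal{N})$ for $\mathcal{M} \in D^-_{\coh}(\cX)$ and flat $c$, which is the cleaner formulation of the same fact.
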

\begin{proof}
  Assume that $(\cX,\cX_0)$ satisfies derived formal functions. Let $\cF
 \in \Coh(\cX)$; then
  \[
    \mathrm{H}^i(\cX,\cF) = \Hom_{\oh_{\cX}}(\oh_{\cX}, \cF[i]) \simeq
    \Hom_{\oh_{\cX}}(\oh_{\hat{\cX}},\hat{\cF}[i]) = \mathrm{H}^i(\hat{\cX},\hat{\cF}).
  \]
  That is, $(\cX,\cX_0)$ satisfies formal functions. Conversely, we
  first note that if $(\cX,\cX_0)$ satisfies formal functions and
  $\mathcal{Q} \in D^+_{\coh}(\cX)$, then
  $\RDERF \Gamma(\cX,\mathcal{Q}) \simeq \RDERF \Gamma(\hat{\cX},c^*\mathcal{Q})$. Indeed, by
  the hypercohomology spectral sequence, we have a diagram
  \[ \begin{tikzcd} \mathrm{H}^i(\cX,\mathscr{H}^j(\mathcal{Q})) \ar{d}{(a)}  \ar[Rightarrow]{r} & \mathrm{H}^{i+j}(\cX, \mathcal{Q}) \ar{d}{(b)}\\
  \mathrm{H}^i(\hat{\cX},\mathscr{H}^j(c^*\mathcal{Q}))  \ar[Rightarrow]{r} & \mathrm{H}^{i+j}(\widehat{\cX}, c^*\mathcal{Q}) \end{tikzcd}\]
where (a) is a morphism of spectral sequences that is an isomorphism by formal functions. Hence (b) is an isomorphism as well. Now
  let $\mathcal{M} \in D^{-}_{\coh}(\cX)$ and $\mathcal{N} \in D^{+}_{\coh}(\cX)$. Then $\RDERF\mathscr{H}om_{\oh_{\cX}}(\mathcal{M},\mathcal{N}) \in D^{+}_{\coh}(\cX)$ and so we have
  \begin{align*}
    \RDERF\Hom_{\oh_{\cX}}(\mathcal{M},\mathcal{N}) &\simeq \RDERF\Gamma(\cX,\RDERF\mathscr{H}om_{\oh_{\cX}}(M,N))\\
                                &\simeq \RDERF \Gamma(\hat{\cX},c^*\RDERF\mathscr{H}om_{\oh_{\cX}}(\mathcal{M},\mathcal{N}))  && (\mbox{formal functions})\\
                                &\simeq \RDERF \Gamma(\hat{\cX},\RDERF\mathscr{H}om_{\oh_{\hat{\cX}}}(c^*\mathcal{M},c^*\mathcal{N}))  && (\mbox{$c$ is flat and $\mathcal{M} \in D^{-}_{\coh}(\cX)$})\\
                                &\simeq \RDERF\Hom_{\oh_{\hat{\cX}}}(c^*\mathcal{M},c^*\mathcal{N})\\
    &\simeq \RDERF \Hom_{\oh_{\cX}}(\mathcal{M},c_{{\qcoh},*}c^*\mathcal{N}).
  \end{align*}
  Let $\cG \in D_{\coh}(\cX)$. To prove that formal functions implies
  derived formal functions, it suffices to prove that the morphism
  $\cG \to c_{{\qcoh},*}c^*\cG$ is an isomorphism in $D(\cX)$. By Lemma
  \ref{L:coherent-gens-der} and the above, $\mathcal{\cG} \to c_{{\qcoh},*}c^*\cG$ is
  an isomorphism whenever $\cG \in D^+_{\coh}(\cX)$. In general, Lemma
  \ref{L:trunc-holim-formal} provides
  \begin{align*}
    c_{{\qcoh},*}c^*\cG &\simeq c_{{\qcoh},*}(\holim{n}\tau^{\geq -n}c^*\cG) \simeq \holim{n}c_{{\qcoh},*}(\tau^{\geq -n}c^*\cG)\\
      &\simeq \holim{n}c_{{\qcoh},*}c^*(\tau^{\geq -n}\cG) \simeq \holim{n} \tau^{\geq -n}\cG \simeq \cG.
  \end{align*}
  This proves \eqref{PI:underived-derived-comparison:ff}. For
  \eqref{PI:underived-derived-comparison:cc}, by
  \eqref{PI:underived-derived-comparison:ff} it suffices to prove that
  the functor is essentially surjective. Equivalently, that the
  natural map $c^*c_{{\qcoh},*}\mathfrak{F} \to \mathfrak{F}$ is an
  isomorphism in $D_{\coh}(\hat{\cX})$. A standard induction argument
  on the length of the complex together with the derived formal
  functions assumption and coherent completeness, shows that for every
  $\mathfrak{F} \in D^b_{\coh}(\hat{\cX})$ there is a unique
  $\cF \in D^b_{\coh}(\cX)$ such that $c^*\cF \simeq \mathfrak{F}$. If
  $\mathcal{P} \in \Coh(\cX)$, 
  it follows from derived formal functions
  that the map $\cF \to c_{{\qcoh},*}\mathfrak{F}$ induces isomorphisms:
  \begin{align*}
    \RDERF \Hom_{\oh_{\cX}}(\mathcal{P},\cF) &\simeq \RDERF
                                   \Hom_{\oh_{\hat{\cX}}}(c^*\mathcal{P},c^*\cF) \simeq \RDERF
                                   \Hom_{\oh_{\hat{\cX}}}(c^*\mathcal{P},\mathfrak{F})\\
                                 &\simeq \RDERF
                                   \Hom_{\oh_{{\cX}}}(\mathcal{P},c_{{\qcoh},*}\mathfrak{F}).
  \end{align*}
  It follows from Lemma \ref{L:coherent-gens-der} that the map
  $\cF \to c_{{\qcoh},*}\mathfrak{F}$ is an isomorphism, so
  $c^*c_{{\qcoh},*}\mathfrak{F} \to \mathfrak{F}$ is an isomorphism. Now
  let $\mathfrak{F} \in D^{-}_{\coh}(\hat{\cX})$. Let
  $\cF_n = c_{{\qcoh},*}(\tau^{\geq -n}\mathfrak{F}) \in D^b_{\coh}(\cX)$
  and set $\cF = \holim{n} \cF_n$. Then \cite[Tag
  \spref{0D6T}]{ref:stacks} (also see Lemma
  \ref{L:trunc-holim-formal}) shows that
  $\mathscr{H}^q(\cF) \simeq \mathscr{H}^q(\cF_n) \in \Coh(\cX)$ whenever
  $q\geq -n$. Hence, $\cF \in D^{-}_{\coh}(\cX)$. Lemma
  \ref{L:trunc-holim-formal} gives a morphism:
  \begin{align*}
    c^*\cF &= c^*(\holim{n} \cF_n) \to \holim{n} c^*\cF_n \simeq \holim{n} \tau^{\geq -n}\mathfrak{F} \simeq \mathfrak{F}.
  \end{align*}
  Let $q\in \bZ$; then the above map induces isomorphisms:
  \[
    \mathscr{H}^q(c^*\cF) \simeq c^*\mathscr{H}^q(\cF) \simeq
    c^*\mathscr{H}^q(\cF_{-q}) \simeq \mathscr{H}^q(c^*\cF_{-q}) \simeq
    \mathscr{H}^q(\tau^{\geq q}\mathfrak{F}) \simeq
    \mathscr{H}^q(\mathfrak{F}).
  \]
  Hence, $c^*F \simeq \mathfrak{F}$. In particular, arguing as above
  using derived formal functions and Lemma \ref{L:coherent-gens-der},
  we see that $c^*c_{{\qcoh},*}\mathfrak{F} \to \mathfrak{F}$ is an
  isomorphism whenever $\mathfrak{F} \in D^{-}_{\coh}(\hat{\cX})$.  In
  general, let $\mathfrak{F} \in D_{\coh}(\hat{\cX})$. Set
  $\cG_n = c_{\qcoh,*}(\tau^{\leq n}\mathfrak{F}) \in D^{\leq
    n}_{\Coh}(\cX)$. Let $\cG = \mathrm{hocolim}_n \cG_n$; then
  $\mathscr{H}^q(\cG) \simeq \mathscr{H}^q(\cG_n) \in \Coh(\cX)$
  whenever $q\leq n$. In particular, $\cG \in D_{\Coh}(\cX)$. But \cite[Tag
  \spref{0949}]{ref:stacks} implies that
  \[
    c^*\cG = c^*\mathrm{hocolim}_n \cG_n \simeq \mathrm{hocolim}_n
    c^*\cG_n \simeq \mathrm{hocolim}_n \tau^{\leq n} \mathfrak{F}
    \simeq \mathfrak{F}.
  \]
  Arguing as before, the result follows.
\end{proof}
The following two lemmas featured in the proof of Proposition
\ref{P:underived-derived-comparison}.
\begin{lemma}\label{L:trunc-holim-formal}
  Let $\cX$ be a noetherian algebraic stack and $\cX_0 \subset \cX$ a
  closed substack. If $\mathfrak{F} \in D_{\coh}(\hat{\cX})$, then
  $\mathfrak{F} \simeq \holim{n} \tau^{\geq -n}\mathfrak{F}$.
\end{lemma}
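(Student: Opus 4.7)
The plan is to check that the natural map $\mathfrak{F} \to \holim{n} \tau^{\geq -n}\mathfrak{F}$ is a quasi-isomorphism by computing cohomology sheaves on both sides. Since the underlying ringed site $\hat{\cX} = (\cX_{\lisset}, \widehat{\cO}_{\cX,\cX_0})$ carries a Grothendieck abelian category of modules, countable homotopy limits exist in $D(\hat{\cX})$ and fit into the standard distinguished triangle built from the product-minus-shift map, exactly as in \cite[Tag \spref{07KZ}]{ref:stacks}.

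First I would form the natural morphism $\mathfrak{F} \to \holim{n} \tau^{\geq -n}\mathfrak{F}$ coming from the canonical maps $\mathfrak{F} \to \tau^{\geq -n}\mathfrak{F}$. Applying $\mathscr{H}^q$ to the distinguished triangle defining $\holim{n}$ yields the Milnor short exact sequence
\[
  0 \longrightarrow \varprojlim{}^1_n\, \mathscr{H}^{q-1}(\tau^{\geq -n}\mathfrak{F}) \longrightarrow \mathscr{H}^q\bigl(\holim{n} \tau^{\geq -n}\mathfrak{F}\bigr) \longrightarrow \varprojlim_n \mathscr{H}^q(\tau^{\geq -n}\mathfrak{F}) \longrightarrow 0,
\]
analogous to \eqref{eq:milnor-sequence}.

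Next I would observe that for each fixed $q$, the inverse system $\{\mathscr{H}^q(\tau^{\geq -n}\mathfrak{F})\}_n$ stabilizes. Indeed, for $n \geq -q$ the truncation does not touch degree $q$, so $\mathscr{H}^q(\tau^{\geq -n}\mathfrak{F}) = \mathscr{H}^q(\mathfrak{F})$ with identity transition maps; for $n < -q$ the group is $0$. In particular the pro-system is eventually constant, so its $\varprojlim{}^1$ vanishes and its $\varprojlim$ is $\mathscr{H}^q(\mathfrak{F})$. Applying the same stabilization to the $(q-1)$-th system kills the $\varprojlim{}^1$ term in the Milnor sequence, and one checks that the identification is compatible with the natural map $\mathscr{H}^q(\mathfrak{F}) \to \mathscr{H}^q(\holim{n}\tau^{\geq -n}\mathfrak{F})$.

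Since this holds for every $q \in \bZ$, the natural map is a quasi-isomorphism. The coherence hypothesis $\mathfrak{F} \in D_{\coh}(\hat{\cX})$ is not used in the argument above for the isomorphism itself, but it guarantees that each $\tau^{\geq -n}\mathfrak{F}$ and the resulting $\holim$ remain within the category of interest; the truly load-bearing input is just the existence of homotopy limits and the Milnor triangle, which is standard in the derived category of modules on a ringed site. The only potential obstacle is ensuring that $\holim{n}$ in $D(\hat{\cX})$ genuinely produces the Milnor sequence on cohomology sheaves, which reduces to sheafification of the corresponding statement on sections over each object $U \to \cX$ of $\cX_{\lisset}$ and is standard.
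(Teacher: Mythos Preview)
There is a genuine gap. The Milnor short exact sequence on cohomology \emph{sheaves} that you invoke requires that countable products in $\modd(\cO_{\hat{\cX}})$ be exact (condition AB4*), so that $\mathscr{H}^q(\prod_n K_n) \cong \prod_n \mathscr{H}^q(K_n)$. For modules on a ringed site this fails in general: products of sheaves are computed as presheaf products, but these do not commute with the sheafification implicit in forming images and hence cohomology. Your last paragraph flags exactly this point and then dismisses it as ``standard''; it is not. In fact, left-completeness of $D(\mathcal{A})$---the assertion that $K \simeq \holim{n}\tau^{\geq -n}K$ for every $K$---is known to fail for some Grothendieck abelian categories (Neeman). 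Consequently your claim that the coherence hypothesis ``is not used in the argument above for the isomorphism itself'' is where the proof breaks.

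The paper's one-line proof goes through \cite[Tag \spref{0D6S}]{ref:stacks}, which gives $K \simeq \holim{n}\tau^{\geq -n}K$ once one knows that the cohomology sheaves $\mathscr{H}^q(K)$ have uniformly bounded cohomological dimension on a suitable family of objects of the site. This is precisely where coherence enters: since $\mathfrak{F} \in D_{\coh}(\hat{\cX})$, each $\mathscr{H}^q(\mathfrak{F})$ is a coherent $\cO_{\hat{\cX}}$-module, and Serre vanishing for coherent sheaves on affine noetherian formal schemes yields $\mathrm{H}^p(\hat{U},\mathscr{H}^q(\mathfrak{F})|_{\hat{U}}) = 0$ for $p>0$ when $U \to \cX$ is smooth with $U$ affine. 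That cohomological bound is what replaces the (unavailable) AB4* input your argument needs.
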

\begin{proof}
  This follows from \cite[Tag \spref{0D6S}]{ref:stacks} and Serre
  vanishing for coherent sheaves on affine noetherian formal schemes.
\end{proof}
\begin{lemma}\label{L:coherent-gens-der}
  Let $\cX$ be a noetherian algebraic stack. Let $f \colon \cF \to \mathcal{G}$ be
  a morphism in $D^+_{\qcoh}(\cX)$. If $\RDERF \Hom_{\oh_{\cX}}(\mathcal{P},f)$
  is an isomorphism for all $\mathcal{P} \in \Coh(\cX)$, then $f$ is an
  isomorphism.
\end{lemma}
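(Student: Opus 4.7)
The plan is to use a cone argument together with the standard fact that every nonzero quasi-coherent sheaf on a noetherian algebraic stack admits a nonzero coherent subsheaf.

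More precisely, I would let $\mathcal{C} \coloneqq \mathrm{cone}(f) \in D^{+}_{\qcoh}(\cX)$ (this lies in $D^+_{\qcoh}$ because both $\cF$ and $\cG$ do). By the distinguished triangle $\cF \to \cG \to \mathcal{C} \to \cF[1]$ and the long exact sequence obtained by applying $\RDERF\Hom_{\oh_\cX}(\mathcal{P}, -)$, the hypothesis on $f$ translates into the statement that $\RDERF\Hom_{\oh_\cX}(\mathcal{P},\mathcal{C}) = 0$ for every $\mathcal{P} \in \Coh(\cX)$. Thus it suffices to show that a complex $\mathcal{C} \in D^+_{\qcoh}(\cX)$ with this vanishing property is zero.

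Suppose for contradiction that $\mathcal{C} \neq 0$. Since $\mathcal{C}$ is bounded below, there is a smallest integer $i$ for which $\mathscr{H}^i(\mathcal{C}) \neq 0$; this cohomology sheaf is quasi-coherent. Because $\cX$ is noetherian, every nonzero quasi-coherent sheaf on $\cX$ can be written as a filtered colimit of its coherent subsheaves, so there exists a nonzero coherent subsheaf $\mathcal{P} \hookrightarrow \mathscr{H}^i(\mathcal{C})$.

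Now the key computation: since $\mathcal{C}[i] \in D^{\geq 0}_{\qcoh}(\cX)$ and $\mathcal{P}$ is concentrated in degree $0$, the truncation triangle $\mathscr{H}^i(\mathcal{C})[0] \to \mathcal{C}[i] \to \tau^{\geq 1}(\mathcal{C}[i]) \to \mathscr{H}^i(\mathcal{C})[1]$ and the vanishing $\Hom_{D(\cX)}(\mathcal{P}, \tau^{\geq 1}(\mathcal{C}[i])[-1]) = 0$ yield an injection
\[
  \Hom_{\oh_\cX}(\mathcal{P},\mathscr{H}^i(\mathcal{C})) \hookrightarrow \Hom_{D(\cX)}(\mathcal{P}, \mathcal{C}[i]) = \mathscr{H}^{i}\bigl(\RDERF\Hom_{\oh_\cX}(\mathcal{P},\mathcal{C})\bigr).
\]
The inclusion $\mathcal{P} \hookrightarrow \mathscr{H}^i(\mathcal{C})$ gives a nonzero element of the left hand side, so $\RDERF\Hom_{\oh_\cX}(\mathcal{P},\mathcal{C}) \neq 0$, contradicting the hypothesis. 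Hence $\mathcal{C} = 0$ and $f$ is an isomorphism.

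The only point that requires any care is the existence of a coherent subsheaf of $\mathscr{H}^i(\mathcal{C})$; everything else is formal manipulation of truncations. This is standard for noetherian algebraic stacks (e.g.\ \cite[Tag \spref{0781}]{ref:stacks}), so there is no essential obstacle.
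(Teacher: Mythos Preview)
Your argument is correct and is essentially the same as the paper's: take the cone, pick the lowest nonvanishing cohomology sheaf, find a nonzero coherent subsheaf, and observe that this produces a nonzero map $\mathcal{P}[-i] \to \mathcal{C}$. The paper is terser (it simply asserts that the induced map is nonzero, whereas you spell out the truncation-triangle justification), and it cites \cite[Prop.~15.4]{ref:lmb} for the existence of a nonzero coherent subsheaf rather than the Stacks tag you give; you may want to double-check that \spref{0781} is really the tag you intend.
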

\begin{proof}
  Let $\mathcal{C}$ be a cone for $f$. If $\mathcal{C} \neq 0$, choose $d\in \bZ$ least
  such that $\mathscr{H}^d(\mathcal{C}) \neq 0$. Then there is a non-zero
  coherent sheaf $\mathcal{P}$ and an injection $\mathcal{P} \subseteq \mathscr{H}^d(\mathcal{C})$
  \cite[Prop.\ 15.4]{ref:lmb}. Hence, the induced map $\mathcal{P}[-d] \to \mathcal{C}$ is
  non-zero, which is a contradiction.
\end{proof}

As a result, we can conclude that formal functions implies that $\Coh(\cX) \to \Coh(\widehat{\cX})$ is fully faithful.

\begin{corollary} \label{C:fullyfaithful}
  Let $\cX$ be a noetherian algebraic stack and $\cX_0 \subset \cX$ a closed substack. If $(\cX, \cX_0)$ satisfies formal functions, then the functor
  \[ 
    \Coh(\cX) \to \Coh(\widehat{\cX}), \qquad \cF \mapsto \hat{\cF}
  \]
  is fully faithful, with essential image stable under kernels,
  cokernels and extensions. Moreover, it is an equivalence if for each non-zero
  $\mathfrak{F} \in \Coh(\widehat{\cX})$ there exist $\cF \in \Coh(\cX)$
  and a non-zero map $c^*\cF \to \mathfrak{F}$.
\end{corollary}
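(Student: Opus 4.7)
The plan is to deduce full faithfulness and the closure properties from the derived reformulation of formal functions in \Cref{P:underived-derived-comparison}, and to handle essential surjectivity by a Noetherian induction exploiting the hypothesis. By \Cref{P:underived-derived-comparison}\eqref{PI:underived-derived-comparison:ff}, formal functions is equivalent to derived formal functions, so $c^*\colon D_{\coh}(\cX) \to D_{\coh}(\hat{\cX})$ is fully faithful; restricting to hearts gives full faithfulness of $c^*\colon \Coh(\cX) \to \Coh(\hat{\cX})$. For closure under kernels and cokernels, I would use that $c$ is flat, so $c^*$ is exact: given $f\colon \hat{\cF}\to \hat{\cG}$ in $\Coh(\hat{\cX})$ with $\cF,\cG \in \Coh(\cX)$, full faithfulness yields the unique $f_0\colon \cF\to\cG$ with $\widehat{f_0}=f$, and then $\ker f = \widehat{\ker f_0}$ and $\coker f = \widehat{\coker f_0}$. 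For closure under extensions, the isomorphism $\Ext^1_{\cX}(\cG,\cF)\simeq \Ext^1_{\hat{\cX}}(\hat{\cG},\hat{\cF})$ (a consequence of derived full faithfulness) shows that every extension of $\hat{\cG}$ by $\hat{\cF}$ in $\Coh(\hat{\cX})$ is the completion of an extension in $\Coh(\cX)$.

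For essential surjectivity under the additional hypothesis, let $\cD \subseteq \Coh(\hat{\cX})$ denote the essential image. The first step is to show that for every $\cH \in \Coh(\cX)$, every subobject $\mathfrak{G}\subseteq c^*\cH$ lies in $\cD$, by Noetherian induction on $\cH$; this is valid since $\cX$ noetherian forces $\Coh(\cX)$ to satisfy ACC on subobjects. If $\mathfrak{G}\ne 0$, the hypothesis supplies a nonzero map $c^*\cE\to\mathfrak{G}$; composing with $\mathfrak{G}\hookrightarrow c^*\cH$ and invoking full faithfulness produces $\varphi\colon\cE\to\cH$ in $\Coh(\cX)$ whose completion equals this composite. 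Setting $\cE'=\mathrm{Im}(\varphi)\ne 0$, I obtain $c^*\cE'=\mathrm{Im}(c^*\varphi)\subseteq\mathfrak{G}$, an object of $\cD$. If $\cE'=\cH$ then $\mathfrak{G}=c^*\cH\in\cD$; otherwise the inductive hypothesis applied to the proper quotient $\cH/\cE'$ puts $\mathfrak{G}/c^*\cE' \subseteq c^*(\cH/\cE')$ into $\cD$, and extension closure places $\mathfrak{G}$ in $\cD$.

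The second step deduces essential surjectivity from the first. Given $\mathfrak{F} \in \Coh(\hat{\cX})$, I would construct an ascending chain $0=\mathfrak{F}_0\subseteq \mathfrak{F}_1\subseteq \cdots \subseteq \mathfrak{F}$ in $\cD$ as follows: while $\mathfrak{F}_n \subsetneq \mathfrak{F}$, apply the hypothesis to $\mathfrak{F}/\mathfrak{F}_n$ to obtain a nonzero $c^*\cH\to \mathfrak{F}/\mathfrak{F}_n$ with image $\mathfrak{I}$, which is a quotient of $c^*\cH$ and hence lies in $\cD$ by Step 1 together with closure under cokernels, and take $\mathfrak{F}_{n+1}$ to be the preimage of $\mathfrak{I}$ in $\mathfrak{F}$ (in $\cD$ by extension closure). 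The chain stabilizes by ACC in $\Coh(\hat{\cX})$, forcing $\mathfrak{F}_N=\mathfrak{F}$ for some $N$. The main obstacle I anticipate is verifying ACC on $\Coh(\hat{\cX})$; I expect to handle this by restricting to an affine smooth cover of $\cX$, where completions correspond to formal spectra of complete adic noetherian rings and coherent sheaves correspond to finitely generated modules, which inherit ACC from the ring.
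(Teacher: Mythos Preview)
Your argument is correct. The treatment of full faithfulness and closure under kernels, cokernels, and extensions is identical to the paper's: both invoke \Cref{P:underived-derived-comparison}\eqref{PI:underived-derived-comparison:ff} to get derived full faithfulness, and then read off the closure properties (the paper phrases the extension step via $\Ext^1$ exactly as you do).

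For essential surjectivity, however, you take a genuinely different route. The paper works with the right adjoint: it sets $c^0_{\qcoh,*}(\mathfrak{F}) = \mathscr{H}^0(c_{\qcoh,*}\mathfrak{F})$, uses the hypothesis to see this is nonzero whenever $\mathfrak{F}\neq 0$, proves that $c^*\cG \to \mathfrak{F}$ is injective for every coherent $\cG \subseteq c^0_{\qcoh,*}\mathfrak{F}$, and then uses ACC on subobjects of $\mathfrak{F}$ to conclude that $c^0_{\qcoh,*}\mathfrak{F}$ is itself coherent; finally it checks directly that the counit $c^*c^0_{\qcoh,*}\mathfrak{F} \to \mathfrak{F}$ is an isomorphism. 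Your approach instead avoids the quasi-coherator entirely, replacing it with the two-step d\'evissage: first showing that subobjects of any $c^*\cH$ lie in the essential image (by Noetherian induction on quotients of $\cH$), then filtering an arbitrary $\mathfrak{F}$ by such subquotients. Your method is more elementary in that it never leaves the abelian categories $\Coh(\cX)$ and $\Coh(\hat{\cX})$; the paper's method is more structural and, as a byproduct, exhibits the inverse functor explicitly as $c^0_{\qcoh,*}$. Both rely on the same noetherianity of $\Coh(\hat{\cX})$, which you correctly flag and which the paper uses as well (``by coherence of $\mathfrak{F}$, there must be a $\lambda_0$\dots'').
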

\begin{proof}
  Since $(\cX,\cX_0)$ satisfies formal functions, it satisfies derived
  formal functions (\Cref{P:underived-derived-comparison}\eqref{PI:underived-derived-comparison:ff}). Hence,
  $\Coh(\cX) \to\Coh(\hat{\cX})$ is exact and fully faithful. It follows immediately that the image of $\Coh(\cX)$ in $\Coh(\hat{\cX})$ is stable under kernels and cokernels. For the extensions, we know that derived formal functions implies that if $\cF$, $\cG \in \Coh(\cX)$, then
  \[
    \Ext^1_{\oh_{\cX}}(\cF,\cG) = \Hom_{\oh_{\cX}}(\cF,\cG[1]) \simeq \Hom_{\oh_{\hat{\cX}}}(\hat{\cF},\hat{\cG}[1]) =     \Ext^1_{\oh_{\hat{\cX}}}(\hat{\cF},\hat{\cG}). 
  \]
  Hence, the image is also stable under extensions.

  If $\mathfrak{P} \in \modd(\widehat{\cX})$, let
  $c_{\qcoh,*}^0(\mathfrak{P}) =
  \mathscr{H}^{0}(c_{\qcoh,*}\mathfrak{P})$, which is a left-exact
  functor. For the equivalence, the condition implies that
  $c^0_{\qcoh,*}(\mathfrak{P}) \neq 0$ whenever
  $\mathfrak{P} \in \Coh(\widehat{\cX})$. Now fix
  $\mathfrak{F} \in \Coh(\widehat{\mathfrak{F}})$. If
  $\cG \subseteq c^0_{\qcoh,*}(\mathfrak{F})$ is a coherent subsheaf,
  let $\mathfrak{K} = \ker(c^*\cG \to \mathfrak{F})$. Then
  $\mathfrak{K} \in \Coh(\widehat{\cX})$ and there is an exact
  sequence:
  \[
    0 \to c^0_{\qcoh,*}(\mathfrak{K}) \to c^0_{\qcoh,*}c^*\cG \to c^0_{\qcoh,*}\mathfrak{F}.
  \]
  By
  \Cref{P:underived-derived-comparison}\eqref{PI:underived-derived-comparison:ff},
  $\cG \to c_{\qcoh,*}^0c^*\cG$ is an isomorphism. Hence,
  $c^0_{\qcoh,*}(\mathfrak{K}) = 0$ and so $\mathfrak{K} = 0$. Now
  write $c_{\qcoh,*}^0\mathfrak{F} = \cup_\lambda \cF_\lambda$ as a
  filtered union of coherent subsheaves. Then
  $\{ c^*\cF_\lambda \}_\lambda$ defines an increasing union of coherent
  subsheaves of $\mathfrak{F}$. By coherence of $\mathfrak{F}$, there
  must be a $\lambda_0$ such that $c^*\cF_\lambda = c^*\cF_{\lambda_0}$
  for all $\lambda \geq \lambda_0$. By full faithfulness,
  $\cF_\lambda = \cF_{\lambda_0}$ for all $\lambda \geq \lambda_0$ and so
  $c_{\qcoh,*}^0\mathfrak{F}$ is coherent. Finally, consider the exact
  sequence:
  \[
    0 \to c^*c_{\qcoh,*}^0\mathfrak{F} \to \mathfrak{F} \to \mathfrak{C} \to 0. 
  \]
  Applying $c_{\qcoh,*}^0$, we see that from \Cref{P:underived-derived-comparison}\eqref{PI:underived-derived-comparison:ff} again that
  \[
    0 \to c_{\qcoh,*}^0\mathfrak{F} \xrightarrow{\mathrm{id}}
    c_{\qcoh,*}^0\mathfrak{F} \to c_{\qcoh,*}^0\mathfrak{C} \to 0
  \]
  is exact. It follows that $c_{\qcoh,*}^0\mathfrak{C} = 0$ and so
  $\mathfrak{C} = 0$, which proves the desired equivalence.
\end{proof}
\section{Strong cohomological properness and formal functions}\label{S:cp-implies-ff}
The main result of this section establishes a relationship between
formal functions and a strengthening of cohomological
properness. While something similar to \Cref{cor:adic-cp-ff} appeared
in \cite{ref:hlp}, \Cref{cor:reductive-ff} is new in positive and
mixed characteristic (it was established in the linearly reductive
case in \cite[Thm.~1.6]{ref:ahr2}). The arguments we use are simple
generalizations of the related results of \cite[III]{ref:ega} and \cite[IX]{ref:sga2}. To this
end, we make the following definition.
\begin{definition}\label{D:strong-cp}
  Let $f\colon \cX \to \cY$ be a morphism of noetherian algebraic
  stacks, $\cY_0 \subset \cY$ a closed substack,
  and $\cX_0 \subset f^{-1}(\cY_0)$ a closed substack. We say that the
  triple $(f,\cX_0,\cY_0)$ is \emph{strongly cohomologically proper}
  if for sheaves of ideals $\cJ$ and $\cI$ defining
  $\cY_0 \subset \cY$ and $\cX_0 \subset \cX$, the morphism
    $\underline{\Spec}_{\cO_{\cX}}(\oplus_{n\geq 0} \cI^n) \to
    \underline{\Spec}_{\cO_{\cY}}(\oplus_{n\geq
      0}\cJ^n)$ is cohomologically proper.
\end{definition}
\begin{remark}\label{R:strong-cp}
  In \Cref{D:strong-cp}, from the commutative diagram:
  \[
    \xymatrix{\cX \ar@{^(->}[r] \ar[d]_f &
      \underline{\Spec}_{\cO_{\cX}}(\oplus_{n\geq 0} \cI^n) \ar[d] \\
      \cY \ar@{^(->}[r] & \underline{\Spec}_{\cO_{\cY}}(\oplus_{n\geq
        0}\cJ^n),}
  \]
  where the horizontal morphisms are closed immersions, it follows
  that $f$ is cohomologically proper. It is also easy to see that the
  definition only depends on the closed subsets
  $|\cX_0| \subseteq |\cX|$ and $|\cY_0| \subseteq |\cY|$ (not the
  ideals $\cI$ and $\cJ$).  Indeed, suppose that we have other ideals
  $\cI'$, $\cJ'$ with vanishing locus $|\cX_0|$ and $|\cY_0|$. Now
  form the commutative diagram:
  \[
    \xymatrix{\underline{\Spec}_{\cO_{\cX}}(\oplus_{n\geq 0} \cI^n) \ar[d] \ar[r] &\underline{\Spec}_{\cO_{\cX}}(\oplus_{n\geq 0} (\cI\cap \cI')^n) \ar[d]  & \underline{\Spec}_{\cO_{\cX}}(\oplus_{n\geq 0} \cI'^n) \ar[d] \ar[l] \\
      \underline{\Spec}_{\cO_{\cY}}(\oplus_{n\geq 0}\cJ^n) \ar[r] &
      \underline{\Spec}_{\cO_{\cY}}(\oplus_{n\geq 0}(\cJ \cap \cJ')^n)
      & \underline{\Spec}_{\cO_{\cY}}(\oplus_{n\geq 0}\cJ'^n) \ar[l].}
  \]
  Since $\cX$ and $\cY$ are noetherian, it is easy to see that the
  horizontal morphisms are all finite and surjective. In particular, the
  cohomological properness of any of the vertical morphisms implies
  that of the others.
\end{remark}
\begin{remark}\label{R:strongcp-stein}
  Let $f\colon \cX \to \cY$ be a morphism of noetherian algebraic
  stacks with affine diagonal, $\cY_0 \subset \cY$ a closed substack,
  and $\cX_0 \subset f^{-1}(\cY_0)$ a closed substack. Assume that
  $\cO_{\cY} \to f_*\cO_{\cX}$ is an isomorphism (i.e., $f$ is Stein)
  and that $\cJ \simeq f_*\cI$, where $\cJ$ defines
  $\cY_0 \subset \cY$ and $\cI$ defines $\cX_0 \subset \cX$. Then the
  triple $(f,\cX_0,\cY_0)$ is strongly cohomologically proper
  if and only if the $\cO_{\cY}$-algebra $\oplus_{n\geq 0} f_*(\cI^n)$
  is finitely generated and the induced morphism
  $\underline{\Spec}_{\cO_{\cX}}(\oplus_{n\geq 0} \cI^n) \to
  \underline{\Spec}_{\cO_{\cY}}(\oplus_{n\geq 0} f_*(\cI^n))$ is
  cohomologically proper. Indeed, we have the composition
  \[
    \underline{\Spec}_{\cO_{\cX}}(\oplus_{n\geq 0} \cI^n) \to
    \underline{\Spec}_{\cO_{\cY}}(\oplus_{n\geq 0} f_*(\cI^n)) \to
    \underline{\Spec}_{\cO_{\cY}}(\oplus_{n\geq 0} \cJ^n) \to \cY.
  \]
  If $(f,\cX_0,\cY_0)$ is strongly cohomologically proper, then
  $\oplus_{n\geq 0} f_*(\cI^n)$ is a finite
  $\oplus_{n\geq 0} \cJ^n$-algebra and so the induced morphism
  $\underline{\Spec}_{\cO_{\cX}}(\oplus_{n\geq 0} \cI^n) \to
  \underline{\Spec}_{\cO_{\cY}}(\oplus_{n\geq 0} f_*(\cI^n))$ is
  cohomologically proper. Conversely, if $\oplus_{n\geq 0}f_*(\cI^n)$
  is a finitely generated $\cO_{\cY}$-algebra, then there is an
  $N\gg 0$ such that $f_*(\cI^N)^k = f_*(\cI^{Nk})$ as ideals of
  $f_*\cO_{\cX} \simeq \cO_{\cY}$ for all $k\geq 0$. But we have
  $\cJ^N \subseteq f_*(\cI^N) \subseteq f_*(\cI) = \cJ$. In particular, we have a commutative diagram:
  \[
    \xymatrix{\underline{\Spec}_{\cO_{\cX}}(\oplus_{n\geq 0} \cI^{n}) \ar[r] \ar[d] & \underline{\Spec}_{\cO_{\cX}}(\oplus_{k\geq 0} \cI^{Nk}) \ar[d] \\
    \underline{\Spec}_{\cO_{\cY}}(\oplus_{n\geq 0}f_*(\cI^{n})) \ar[d] \ar[r] & \underline{\Spec}_{\cO_{\cY}}(\oplus_{k\geq 0}f_*(\cI^{N})^k)\ar[d] \\ \underline{\Spec}_{\cO_{\cY}}(\oplus_{n\geq 0} \cJ^{n}) \ar[r] & \underline{\Spec}_{\cO_{\cY}}(\oplus_{k\geq 0} \cJ^{Nk}),}
\]
where the horizontal arrows are all finite and surjective. The top vertical arrow on
the right is cohomologically proper and the bottom vertical arrow on
the right is finite, whence the composition is cohomologically
proper. It follows that the composition of the vertical arrows on the
left is cohomologically proper, which proves the claim.
\end{remark}
We have the following two key examples.
\begin{example}\label{E:adic-strong-cp}
  Let $f\colon \cX \to \cY$ be a universally cohomologically proper
  morphism of noetherian algebraic stacks. Let
  $\cY_0 \subset \cY$ be a closed substack. Then the triple
  $(f,f^{-1}(\cY_0), \cY_0)$ is strongly cohomologically
  proper. To see this: let $\cJ \subset \cO_{\cY}$ be a coherent
  sheaf of ideals definining $\cY_0 \subset \cY$. Since $\cY$ is
  noetherian, $\oplus_{n\geq 0} \cJ^n$ is a finitely generated
  $\cO_{\cY}$-algebra. Now form the cartesian square:
  \[
    \xymatrix{\underline{\Spec}_{\cO_{\cX}}(\oplus_{n\geq 0} \cJ^n\cO_{\cX})
      \ar[r] \ar[d]_{\tilde{f}} & \cX \ar[d]^f
      \\\underline{\Spec}_{\cO_{\cY}}(\oplus_{n\geq 0} \cJ^n) \ar[r] & \cY.}
  \]
  Since $f$ is universally cohomologically proper, $\tilde{f}$ is
  cohomologically proper. 
\end{example}
\begin{example}\label{E:fund-strong-cp}
  Let $R$ be a noetherian ring. Let $A$ be a finitely generated
  $R$-algebra with an action of a reductive group scheme $G$ over
  $\Spec R$. Let $I \subseteq A$ be a $G$-equivariant ideal. Let
  $\cX=[\Spec A/G]$, $\cX_0 = [\Spec (A/I)/G]$,
  $\cY_0 = \Spec (A^G/I^G)$ and $f\colon \cX \to \Spec (A^G)$. Then
  the triple $(f, \cX_0,\cY_0)$ is strongly cohomologically
  proper. Indeed, by \Cref{E:fund-is-cp}, $f$ is universally
  cohomologically proper. Further, the $G$-equivariant $A$-algebra
  $\oplus_{n\geq 0} I^n$ is finitely generated and so
  $(\oplus_{n\geq 0}I^n)^G = \oplus_{n\geq 0} \mathrm{H}^0([\Spec
  A/G],\tilde{I}^n)$ is a finitely generated $A^G$-algebra, where
  $\tilde{I}$ denotes the coherent sheaf of ideals on $[\Spec A/G]$
  associated to $I$. By \Cref{E:fund-is-cp} again, the induced
  morphism
  $[\Spec (\oplus_{n\geq 0} I^n)/G] \to \Spec ((\oplus_{n\geq
    0}I^n)^G)$ is cohomologically proper and the claim follows from
  \Cref{R:strongcp-stein}. If $A^G$ is artinian, this also follows from
  \Cref{thm:main}.
\end{example}
We now have the main result of this section. 
\begin{theorem} \label{T:strong-cp-implies-ff} Let
  $f \colon \cX \to \Spec R$ be a finite type morphism of noetherian
  algebraic stacks, $J \subset R$ an ideal, and
  $\cX_0 \subset f^{-1}(\Spec (R/J))$ a closed substack. Assume that
  the triple $(f,\cX_0,\Spec(R/J))$ is strongly cohomologically proper and
  $R$ is $J$-adically complete.
  \begin{enumerate}[{\rm(1)}]
  \item \label{TI:strong-cp-implies-ff:complete-fns} Let $\cI$ be a
    coherent $\cO_{\cX}$-ideal defining the closed immersion
    $\cX_0 \subset \cX$. Then $\Gamma(\cX,\cO_{\cX})$ is
    $\Gamma(\cX,\cI)$-adically complete.
  \item \label{TI:strong-cp-implies-ff:ff} Formal functions holds for
    the pair $(\cX,\cX_0)$. Moreover, if $\cF$ is a coherent
    $\cO_{\cX}$-module, then the natural morphisms:
    \[
      \mathrm{H}^q(\cX,\cF) \to \mathrm{H}^q(\hat{\cX},\hat{\cF}) \to
      \varprojlim_n \mathrm{H}^q(\cX_n,\cF_n)
    \]
    are isomorphisms for all $q\geq 0$. 
  \item \label{TI:strong-cp-implies-ff:fcp}If $\mathfrak{F}$ is a
    coherent $\cO_{\hat{\cX}}$-module, then
    $\mathrm{H}^q(\hat{\cX},\mathfrak{F})$ is a finitely generated
    $R$-module and the natural morphism:
    \[
      \mathrm{H}^q(\hat{\cX},\mathfrak{F}) \to \varprojlim_n
      \mathrm{H}^q(\cX_n,\mathfrak{F}_n)
    \]
    is an isomorphism for all $q\geq 0$.
  \end{enumerate}
\end{theorem}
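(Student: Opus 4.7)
The plan is to adapt Grothendieck's classical formal functions theorem (\cite[III, \S3--4]{ref:ega}) to the setting of strongly cohomologically proper morphisms. The essential new input is that strong cohomological properness provides graded finite generation of Rees-style cohomology modules.

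The hypothesis means that the morphism $\tilde{f}\co \tilde{\cX}\to\tilde{Y}$ is cohomologically proper, where $\tilde{\cX} := \underline{\Spec}_{\cO_\cX}(\bigoplus_{n\geq 0}\cI^n)$ and $\tilde{Y} := \underline{\Spec}_R(\bigoplus_{n\geq 0}J^n)$. For a coherent $\cO_\cX$-module $\cF$, the graded module $\bigoplus_{n\geq 0}\cI^n\cF$ defines a coherent sheaf $\tilde{\cF}$ on $\tilde{\cX}$, and so $\RDERF\tilde{f}_*\tilde{\cF}$ is coherent on $\tilde{Y}$. Extracting graded pieces, for each $q\geq 0$ the graded $\bigoplus_{n\geq 0}J^n$-module $\bigoplus_{n\geq 0}\mathrm{H}^q(\cX,\cI^n\cF)$ is finitely generated. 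Hence each $\mathrm{H}^q(\cX,\cF)$ is finitely generated over $R$, and there exists $d\geq 0$ such that the multiplication $J\otimes_R \mathrm{H}^q(\cX,\cI^n\cF)\to\mathrm{H}^q(\cX,\cI^{n+1}\cF)$ is surjective for $n\geq d$.

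Set $I^q_n \coloneqq \mathrm{image}(\mathrm{H}^q(\cX,\cI^{n+1}\cF)\to\mathrm{H}^q(\cX,\cF))$. The above surjectivity yields $I^q_n = J^{n-d}\cdot I^q_d$ for $n\geq d$, so $\{I^q_n\}$ is cofinal with the $J$-adic filtration on $\mathrm{H}^q(\cX,\cF)$. Since $R$ is $J$-adically complete and $\mathrm{H}^q(\cX,\cF)$ is finitely generated over $R$, it follows that $\mathrm{H}^q(\cX,\cF)\iso\varprojlim_n\mathrm{H}^q(\cX,\cF)/I^q_n$. Taking $q=0$, $\cF=\cO_\cX$, the inclusions $\Gamma(\cX,\cI)^n\subseteq I^0_n\subseteq J^{n-d}\Gamma(\cX,\cO_\cX)\subseteq \Gamma(\cX,\cI)^{n-d}$ for $n\geq d$ show that $\{I^0_n\}$ is cofinal with the $\Gamma(\cX,\cI)$-adic filtration on $\Gamma(\cX,\cO_\cX)$, establishing \eqref{TI:strong-cp-implies-ff:complete-fns}. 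For \eqref{TI:strong-cp-implies-ff:ff}, the long exact sequence of $0\to\cI^{n+1}\cF\to\cF\to\cF_n\to 0$ produces
\[ 0\to\mathrm{H}^q(\cX,\cF)/I^q_n\to\mathrm{H}^q(\cX_n,\cF_n)\to K^{q+1}_n\to 0, \]
where $K^{q+1}_n \coloneqq \ker(\mathrm{H}^{q+1}(\cX,\cI^{n+1}\cF)\to\mathrm{H}^{q+1}(\cX,\cF))$. The left inverse system has surjective transitions, hence is Mittag--Leffler, and I would establish the same for $\{K^{q+1}_n\}$ by a graded Artin--Rees argument in the style of \cite[III.3.3]{ref:ega}: the graded finiteness of $\bigoplus\mathrm{H}^{q+1}(\cX,\cI^{n+1}\cF)$ over the Rees algebra forces the transition maps between the $K^{q+1}_n$ to have eventually stable image. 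Taking $\varprojlim$ and comparing with the Milnor sequence \eqref{eq:milnor-sequence} then yields both isomorphisms of \eqref{TI:strong-cp-implies-ff:ff}.

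For \eqref{TI:strong-cp-implies-ff:fcp}, a coherent sheaf $\mathfrak{F}$ on $\hat{\cX}$ corresponds to an inverse system $\{\mathfrak{F}_n\}\in\varprojlim\Coh(\cX_n)$ by \cite[Thm.~2.3]{ref:conrad}. Since $f$ is itself cohomologically proper by \Cref{R:strong-cp}, each $\mathrm{H}^q(\cX_n,\mathfrak{F}_n)$ is a finitely generated $R/J^{n+1}$-module. Mittag--Leffler for $\{\mathrm{H}^q(\cX_n,\mathfrak{F}_n)\}$ then follows from an argument parallel to the one for $\{K^{q+1}_n\}$, applied to the associated-graded coherent sheaf $\bigoplus_n \cI^n\mathfrak{F}/\cI^{n+1}\mathfrak{F}$, together with cohomological properness of the induced morphism $\underline{\Spec}_{\cX_0}(\bigoplus\cI^n/\cI^{n+1})\to\underline{\Spec}_{R/J}(\bigoplus J^n/J^{n+1})$ obtained by restricting $\tilde{f}$. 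The Milnor sequence then produces finiteness of $\mathrm{H}^q(\hat{\cX},\mathfrak{F})$ over $R$ together with the asserted isomorphism with $\varprojlim_n\mathrm{H}^q(\cX_n,\mathfrak{F}_n)$. The main obstacle will be the Mittag--Leffler verifications on $\{K^{q+1}_n\}$ in \eqref{TI:strong-cp-implies-ff:ff} and on $\{\mathrm{H}^q(\cX_n,\mathfrak{F}_n)\}$ in \eqref{TI:strong-cp-implies-ff:fcp}: one must leverage the full graded action of $\bigoplus J^n$ on the Rees-style cohomology, not merely its degree-wise finiteness, via a careful Artin--Rees-type argument.
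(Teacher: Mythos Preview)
Your approach is essentially the one the paper takes: it too follows Serre's argument from \cite[III.4.1.5]{ref:ega}, sets up the same short exact sequence
\[
  0 \to \mathrm{H}^q(\cX,\cF)/I^q_n \to \mathrm{H}^q(\cX_n,\cF_n) \to K^{q+1}_n \to 0,
\]
and for part \eqref{TI:strong-cp-implies-ff:fcp} likewise passes to the associated graded $\bigoplus_n \cI^n/\cI^{n+1}$ and invokes \cite[III.3.4.4]{ref:ega}. Your treatment of \eqref{TI:strong-cp-implies-ff:complete-fns} via cofinality of filtrations is exactly the paper's.

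There is, however, one genuine gap in your argument for \eqref{TI:strong-cp-implies-ff:ff}. You assert that graded finiteness forces the transition maps in $\{K^{q+1}_n\}$ to have \emph{eventually stable image}, i.e.\ Mittag--Leffler. But Mittag--Leffler only kills $\varprojlim^1 K^{q+1}_n$; to conclude $\mathrm{H}^q(\cX,\cF)\simeq \varprojlim_n \mathrm{H}^q(\cX_n,\cF_n)$ from your short exact sequence you need $\varprojlim_n K^{q+1}_n = 0$, which requires the system to be \emph{Artin--Rees zero} (some fixed $s$ with $K^{q+1}_{n+s}\to K^{q+1}_n$ the zero map for all $n$). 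Graded finiteness alone does not give this: a constant nonzero system is finitely generated over the Rees algebra and Mittag--Leffler, yet has nonzero limit. The missing observation is that $K^{q+1}_n$, being a quotient of $\mathrm{H}^q(\cX_n,\cF_n)$, is annihilated by $\Gamma(\cX,\cI)^{n+1}$. Combining this annihilation with the graded finiteness (so that $K^{q+1}_{n+s}$ is generated from $K^{q+1}_n$ by elements of $\Gamma(\cX,\cI)^s$ for large $n$) and the identity \eqref{eq:4} relating the graded action to the transition maps is what forces the transitions to be eventually zero. The paper carries this out explicitly; once you add this step your outline is complete.
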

Before we prove \Cref{T:strong-cp-implies-ff}, we provide several
corollaries.
\begin{corollary}\label{cor:ucp-stein}
  Let $f\colon \cX \to \cY$ be a universally cohomologically proper
  morphism of noetherian algebraic stacks. Then
  $f$ admits a \emph{Stein factorization}:
  $$\cX \xrightarrow{\bar{f}}
  \underline{\Spec}_{\cO_{\cY}}(f_*\cO_{\cX}) \xrightarrow{a} \cY,$$
  where $a$ is finite and $\bar{f}$ is universally cohomologically
  proper and universally closed with geometrically connected fibers.
\end{corollary}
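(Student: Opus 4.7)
The plan is to construct the Stein factorization directly from the coherent sheaf of algebras $f_*\cO_{\cX}$. Since $f$ is cohomologically proper, $f_*\cO_{\cX}$ is a coherent sheaf of $\cO_{\cY}$-algebras, so $\cZ := \underline{\Spec}_{\cO_{\cY}}(f_*\cO_{\cX})$ is finite over $\cY$ via the structural morphism $a\colon \cZ \to \cY$, and the adjunction produces a factorization $f = a\circ\bar{f}$ with $\bar{f}_{*}\cO_{\cX} = \cO_{\cZ}$ by construction; thus $\bar{f}$ is Stein.

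I would next verify that $\bar{f}$ is universally cohomologically proper. For cohomological properness itself, since $a$ is affine we have $\RDERF a_* = a_*$ on quasi-coherent sheaves, yielding $\RDERF^i f_*\cF = a_{*}\RDERF^i \bar{f}_*\cF$ for every coherent $\cF$ on $\cX$; coherence of $\RDERF^i f_*\cF$ combined with the finiteness of $a$ (so $a_*$ reflects coherence) forces $\RDERF^i\bar{f}_*\cF$ to be coherent. For universality, given any $\cZ' \to \cZ$, the affineness of $a$ makes $\cZ \to \cY$ separated, so the canonical map $\cX\times_{\cZ}\cZ' \hookrightarrow \cX \times_{\cY} \cZ'$ is obtained by pulling back the diagonal $\cZ \hookrightarrow \cZ \times_{\cY} \cZ$, hence is a closed immersion. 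Since $\cX \times_{\cY} \cZ' \to \cZ'$ is cohomologically proper by the universal cohomological properness of $f$, and precomposition with a closed immersion preserves cohomological properness (as $i_*$ is exact and preserves coherence for $i$ a closed immersion), the base change $\bar{f}\times_{\cZ}\cZ'$ is cohomologically proper. Universal closedness of $\bar{f}$ then follows from Remark \ref{R:ucp-uc}.

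For geometric connectedness of the fibers, I would adapt the classical formal-functions argument. Fix a geometric point $\bar{z}\colon\Spec K \to \cZ$ and set $R = \widehat{\cO}^{sh}_{\cZ,\bar{z}}$, a complete noetherian local ring with maximal ideal $\fm$. The base change $\bar{f}_R\colon \cX_R \to \Spec R$ is universally cohomologically proper by the previous step and remains Stein via flat base change along the flat map $\Spec R \to \cZ$. By Example \ref{E:adic-strong-cp}, the triple $(\bar{f}_R, \bar{f}_R^{-1}(\Spec R/\fm), \Spec R/\fm)$ is strongly cohomologically proper, so Theorem \ref{T:strong-cp-implies-ff} gives $\Gamma(\cX_R, \cO_{\cX_R}) \cong \varprojlim_n \Gamma(\cX_{R,n}, \cO_{\cX_{R,n}}) = R$, the last equality from the Stein property. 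If the fiber $\cX_{\bar{z}} = \cX_{R,0}$ were disconnected, any nontrivial idempotent in $\Gamma(\cX_{R,0}, \cO)$ would lift inductively through the square-zero thickenings $\cX_{R,n} \subset \cX_{R,n+1}$ (by Hensel for idempotents through nilpotent ideals) to a nontrivial idempotent in the inverse limit, i.e., in the local ring $R$, a contradiction. The main obstacle is this last idempotent-lifting step in the stacky setting: one must verify that Hensel-style lifting still applies, which reduces to checking that the obstructions live in cohomology controlled by the nilpotent sheaves $\cI^{n+1}/\cI^{n+2}$ and that global sections compute correctly in the limit, both of which are consequences of Theorem \ref{T:strong-cp-implies-ff}.
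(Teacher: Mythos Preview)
Your proof is correct and follows essentially the same strategy as the paper's: reduce to a complete local base, apply \Cref{E:adic-strong-cp} and \Cref{T:strong-cp-implies-ff} to identify $R$ with the inverse limit of $\Gamma(\cX_{R,n},\cO_{\cX_{R,n}})$, and conclude by lifting a hypothetical idempotent to contradict locality of $R$. Your explicit verification that $\bar{f}$ is universally cohomologically proper (via the closed-immersion factorization through $\cX\times_{\cY}\cZ'$) is a detail the paper leaves implicit, and your final worry about idempotent lifting is unnecessary---this is purely a statement about the inverse system of commutative rings $\Gamma(\cX_{R,n},\cO_{\cX_{R,n}})$ with surjective transition maps having nilpotent kernel, handled by \cite[Tag \spref{0G7X}]{ref:stacks} with no stack-specific input required.
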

\begin{proof}
  By \Cref{R:ucp-uc}, it suffices to prove that if $\cY= \Spec R$,
  where $(R,\mathfrak{m})$ is an $\mathfrak{m}$-adically complete
  noetherian local ring and $R \simeq \mathrm{H}^0(\cX,\cO_{\cX})$;
  then the closed fiber
  $\cX_0=\cX\otimes_R R/\mathfrak{m} \to \Spec (R/\mathfrak{m})$ is
  connected. By \Cref{E:adic-strong-cp}, the triple $(\cX,\cX_0, f)$
  is strongly cohomologically proper. It follows from
  \Cref{T:strong-cp-implies-ff}\eqref{TI:strong-cp-implies-ff:ff} that
  $R\simeq \mathrm{H}^0(\cX,\cO_{\cX}) \simeq \varprojlim_n
  \mathrm{H}^0(\cX_n,\cO_{\cX_n})$. If $\cX_0$ is disconnected, then
  the corresponding idempotent $e_0$ lifts to a non-trivial idempotent
  $e$ of $R$, which contradicts $R$ being local \cite[Tag
  \spref{0G7X}]{ref:stacks}.
\end{proof}
\begin{corollary}\label{cor:adic-cp-ff}
  Let $\pi \colon \cX \to \Spec R$ be a {universally}
  cohomologically proper morphism of noetherian algebraic stacks. Let
  $J \subseteq R$ be an ideal. If $R$ is $J$-adically complete, then
  formal functions holds for the pair $(\cX,\pi^{-1}(\spec R/J))$.
\end{corollary}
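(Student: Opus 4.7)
The plan is to deduce this corollary as an essentially immediate consequence of the two key results just established: \Cref{E:adic-strong-cp}, which upgrades universal cohomological properness to strong cohomological properness along preimages of closed substacks, and \Cref{T:strong-cp-implies-ff}\eqref{TI:strong-cp-implies-ff:ff}, which turns strong cohomological properness plus completeness into formal functions.

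Concretely, I would set $\cX_0 = \pi^{-1}(\Spec(R/J))$, viewed as a closed substack of $\cX$ via the morphism $\pi$. Applying \Cref{E:adic-strong-cp} to the universally cohomologically proper morphism $\pi \co \cX \to \Spec R$ and the closed substack $\Spec(R/J) \subset \Spec R$ yields that the triple $(\pi, \cX_0, \Spec(R/J))$ is strongly cohomologically proper. Since $R$ is by hypothesis $J$-adically complete, the hypotheses of \Cref{T:strong-cp-implies-ff} are met verbatim, and part \eqref{TI:strong-cp-implies-ff:ff} of that theorem produces formal functions for the pair $(\cX, \cX_0)$, which is exactly the conclusion.

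Thus there is no real obstacle: the content of the corollary is entirely absorbed into the two preceding results, and the role of the corollary is to package their combination in the most useful form (namely, phrased directly in terms of universal cohomological properness and adic completeness of the base, without the user having to check strong cohomological properness by hand).
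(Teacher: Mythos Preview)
Your proposal is correct and takes exactly the same approach as the paper, which simply writes ``Combine \Cref{E:adic-strong-cp} with \Cref{T:strong-cp-implies-ff}.'' You have spelled out the two steps in slightly more detail, but the argument is identical.
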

\begin{proof}
  Combine \Cref{E:adic-strong-cp} with \Cref{T:strong-cp-implies-ff}.
\end{proof}
\begin{corollary}\label{cor:reductive-ff}
  Let $R$ be a noetherian ring. Let $A$ be a finitely generated
  $R$-algebra with an action of a reductive group scheme $G$ over
  $\Spec R$. Let $I \subseteq A$ be a $G$-equivariant ideal. Let
  $\cX = [\Spec A/G]$, $\cX_0 = [\Spec (A/I)/G]$,
  $\cY_0 = \Spec (A^G/I^G)$, $\cY=\Spec (A^G)$ and
  $f \colon \cX \to \cY$ the induced morphism. If $A^G$ is
  $I^G$-adically complete, then the triple $(f,\cX_0,\cY_0)$ is
  strongly cohomologically proper and the pair $(\cX,\cX_0)$ satisfies
  formal functions.
\end{corollary}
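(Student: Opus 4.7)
The plan is to assemble this corollary from two results that have already been set up precisely for this purpose: Example \ref{E:fund-strong-cp} and Theorem \ref{T:strong-cp-implies-ff}.

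First, I would observe that the strong cohomological properness of the triple $(f,\cX_0,\cY_0)$ is literally the content of Example \ref{E:fund-strong-cp}; this part of the conclusion requires no completeness hypothesis on $A^G$. The argument there uses reductivity of $G$ and $G$-equivariance of $I$ to conclude via Example \ref{E:fund-is-cp} that the Rees algebra map $[\Spec(\bigoplus_n I^n)/G] \to \Spec((\bigoplus_n I^n)^G)$ is cohomologically proper, and then appeals to Remark \ref{R:strongcp-stein} to deduce the strong cohomological properness of the original triple.

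Next, for formal functions I would invoke Theorem \ref{T:strong-cp-implies-ff} applied to the morphism $f \colon \cX \to \Spec A^G$ with base ring $A^G$ and ideal $J = I^G$. The three hypotheses are easy to check: $A^G$ is $I^G$-adically complete by assumption; the containment $\cX_0 \subseteq f^{-1}(\Spec(A^G/I^G))$ holds because $I^G \cdot A \subseteq I$, so $f^{-1}(\cY_0) = [\Spec(A/I^G A)/G]$ contains $\cX_0 = [\Spec(A/I)/G]$; and the strong cohomological properness of the triple $(f,\cX_0,\Spec(A^G/I^G))$ is exactly the first part of the corollary established in the previous step. Then part \eqref{TI:strong-cp-implies-ff:ff} of Theorem \ref{T:strong-cp-implies-ff} delivers formal functions for $(\cX,\cX_0)$.

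I do not expect any genuine obstacle: the real work was absorbed into the two preparatory results, and this corollary is essentially a packaging statement that combines them. The only small verification is the containment $\cX_0 \subseteq f^{-1}(\cY_0)$, which is immediate from $I^G \subseteq I$.
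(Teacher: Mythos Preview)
Your proposal is correct and matches the paper's own proof exactly: the paper simply writes ``Combine \Cref{E:fund-strong-cp} and \Cref{T:strong-cp-implies-ff}.'' Your additional verification that $\cX_0 \subseteq f^{-1}(\cY_0)$ via $I^G A \subseteq I$ is a welcome clarification of a hypothesis the paper leaves implicit.
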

\begin{proof}
  Combine \Cref{E:fund-strong-cp} and \Cref{T:strong-cp-implies-ff}.
\end{proof}
We recall some background on filtrations that will be important for
the proof of \Cref{T:strong-cp-implies-ff}. Let $A$ be a ring,
$I \subset A$ an ideal, and $M$ an $A$-module. A filtration
$(M_n)_{n\in \mathbf{Z}}$ of $M$ is $I$-\emph{good} (or
$I$-\emph{stable}) if the following three conditions are satisfied:
\begin{enumerate}
\item $M=M_k$ for some $k\in \mathbf{Z}$;
\item $IM_n \subset M_{n+1}$ for all $n\in \mathbf{Z}$; and 
\item $IM_n = M_{n+1}$ for all $n\gg 0$.
\end{enumerate}
Obviously, the filtration $(I^{n+1}M)_{n\geq -1}$ is $I$-good; the
topology that this filtration defines on $M$ is called the $I$-adic
topology on $M$. A key observation is that the topology on $M$ defined
by any $I$-good filtration is equivalent to the $I$-adic topology on
$M$ \cite[Lem.~10.6]{MR0242802}. A much deeper fact is that if $A$ is
noetherian and $M$ is finitely generated, then a filtration $(M_n)$ is
$I$-stable if and only if $M_\ast = \oplus_{n\in \mathbf{Z}} M_n$ is a
finitely generated $A_\ast = \oplus_{n\geq 0} I^n$-module
\cite[Lem.~10.8]{MR0242802}. A key consequence of this whole theory is
that if $M$ is a finitely generated $A$-module and $A$ is $I$-adically
complete, then $M$ is $I$-adically complete.

Assume now that we are in the situation of
\Cref{T:strong-cp-implies-ff}.  Let $\cM$ be a coherent
$\cO_\cX$-module. Let $\cI_\ast = \oplus_{n\geq 0} \cI^n$ and let
$\cM_\ast = \oplus_{n\geq 0} \cI^{n}\cM$. The quasi-coherent
$\cO_\cX$-algebra $\cI_\ast$ is of finite type and $\cM_\ast$ is a
coherent $\cI^\ast$-module \cite[Lem.~10.8]{MR0242802}.
    
Let $I_\ast=\Gamma(\cX,\cI_\ast)$ and
$M^q_{\ast} = \mathrm{H}^q(\cX,\cM_\ast)$. Let us briefly remark on
the graded structure of $M^q_\ast$. If
$x \in I_s = \Gamma(\cX,\cI^s)$, then for all $t\geq 0$ there is an
induced homomorphism of $\cO_\cX$-modules that
$\cI^t\cM \to \cI^{s+t}\cM$ that is multiplication by $x$. It follows
that we obtain an induced morphism:
  \[
    \mu_{x,\cM,t}^q \colon
    \mathrm{H}^q(\cX,\cI^t\cM) \to
    \mathrm{H}^q(\cX,\cI^{t+s}\cM).
  \]
  This is how $M^q_{\ast}$ becomes a graded $I_\ast$-module. In
  particular, $I_sM^q_t \subset M^q_{t+s}$ denotes the image of the
  natural $R=I_0$-module homomorphism
  $I_s\tensor_R M^q_t \to M^q_{t+s}$.

  Further, the canonical inclusions
  $\cI^{t}\cM \subset \cI^{t'}\cM$
  for $t\geq t'$ give rise to an inverse system
  $(\mathrm{H}^q(\cX,\cI^t\cM))_{t\geq 0}$ with
  transition map
  $\nu_{\cM,t,t'}^q \colon
  \mathrm{H}^q(\cX,\cI^{t}\cM) \to
  \mathrm{H}^q(\cX,\cI^{t'}\cM)$ when $t\geq t'$. It
  follows that the composition:
  \[
    \mathrm{H}^q(\cX,\cI^t\cM)
    \xrightarrow{\mu_{x,\cM,t}^q}
    \mathrm{H}^q(\cX,\cI^{t+s}\cM)
    \xrightarrow{\nu_{\cM,t+s,t}^q}
    \mathrm{H}^q(\cX,\cI^t\cM)
  \]
  coincides with multiplication by $x$ on
  $\mathrm{H}^q(\cX,\cI^t\cM)$ as an $R$-module. In
  particular, if
  $P \subset \mathrm{H}^q(\cX,\cI^{t}\cM)$ is an
  $R$-submodule, then we have the equality of $R$-submodules of
  $\mathrm{H}^q(\cX,\cI^t\cM)$:
  \begin{equation}
    \Gamma(\cX,\cI^{s})P = \nu^q_{\cM,t+s,t}(I_sP) \label{eq:4}.
  \end{equation}
  By assumption, $I_\ast$ is a finitely generated $R$-algebra and
  $M^q_\ast$ is a finitely generated and graded $I_\ast$-module. It
  follows that for some sufficiently large $N$, that as ideals of the
  finite $R$-algebra $\Gamma(\cX,\cO_{\cX})$, we have
  $\Gamma(\cX,\cI^N)^k = \Gamma(\cX,\cI^{Nk})$ for all $k\geq 0$. In
  particular, replacing $I$ and $\cI$ by $I^N$ and $\cI^N$,
  respectively, we may assume that
  $\Gamma(\cX,\cI)^k=\Gamma(\cX,\cI^k)$ for all $k\geq 0$.
\begin{proof}[Proof of \Cref{T:strong-cp-implies-ff}]
  We first prove \eqref{TI:strong-cp-implies-ff:complete-fns}. The
  discussion above showed that $\oplus_{n\geq 0} I^n$ is a finite
  $\oplus_{n\geq 0} J^n$-module. It follows that the $I$-adic
  filtration on $S$ is $J$-stable and so the $J$-adic topology is
  equivalent to the $I$-adic topology. Since $R$ is $J$-adically
  complete and $S$ is a finite $R$-module, $S$ is $I$-adically
  complete too. 
  
  The proof of \eqref{TI:strong-cp-implies-ff:ff} follows Serre's
  argument in \cite[III.4.1.5]{ref:ega} (cf.\ \cite[\S 8.2]{MR2222646}
  and \cite[\S4]{ref:ahr2}). By
  \eqref{TI:strong-cp-implies-ff:complete-fns}, we may assume that
  $R=\Gamma(\cX,\cO_{\cX})$ and
  $\Gamma(\cX,\cI^k) = \Gamma(\cX,\cI)^k$ for all $k\geq 0$ and so
  $I_\ast = \oplus_{k\geq 0} I^k=R_\ast$. By \Cref{R:formal-fns}, it
  suffices to prove that
  $\mathrm{H}^q(\cX,\cF) \to \varprojlim_n \mathrm{H}^q(\cX_n,\cF_n)$
  is an isomorphism for all coherent sheaves $\cF$ on $\cX$ and
  $q\geq 0$ and $\{H^{q-1}(\cX_n,\cF_n)\}_n$ satisfies the Artin--Rees
  condition (this implies that the $\lim^1$ term in
  \eqref{eq:milnor-sequence} vanishes).

  We now let $\cF$ be a coherent $\cO_\cX$-module. Let
  $q\geq 0$ and $n\geq -1$ and consider the exact sequence of
  $R$-modules:
  \begin{equation}
    \xymatrix{0 \ar[r] & R^q_n \ar[r] & H^q \ar[r] & H^q_n \ar[r] & Q^q_n \ar[r] & 0,}\label{eq:2}
  \end{equation}  
  where $H^q = \mathrm{H}^q(\cX,\cF)$,
  $H^q_n = \mathrm{H}^q(\cX,\cF/\cI^{n+1}F)$,
  $L^q_{n} = \mathrm{H}^q(\cX,\cI^{n+1}\cF)$,
  \begin{align*}
    R^q_n &= \ker(H^q \to H^q_n)  = \mathrm{im}(L^q_n \to H^q), \,\mbox{and}\\
    Q^q_n &= \mathrm{im}(H^q_n \to {L}^{q+1}_n) = \ker({L}^{q+1}_n \to H^{q+1}).
  \end{align*}
  The result follows from the following three claims:
  \begin{enumerate}
  \item the filtration $(R^q_n)_{n\geq -1}$ on $H^q$ is $I$-good;
  \item the inverse system $(Q^q_n)$ is Artin--Rees zero (i.e., there exists an $s$ such that $Q_{n+s}^q \to Q^q_n$ is $0$ for all $n$); 
  \item the inverse system $(H^{q-1}_n)$ satisfies the uniform
    Artin--Rees condition (i.e., there is an $s$ such that the images
    of the morphisms $H^{q-1}_{n'} \to H^{q-1}_{n}$ agree for all
    $n'\geq n+s$).
  \end{enumerate}
  Indeed, the exact sequence \eqref{eq:2} induces the following short
  exact sequence:
  \begin{equation}
    \xymatrix{0 \ar[r] & H^q/R^q_n \ar[r] & H^q_n \ar[r] & Q^q_n \ar[r] &
      0.}\label{eq:3}
  \end{equation}
  We now take inverse limits, and obtain the following exact sequence:
  \[
    \xymatrix{0 \ar[r] & \varprojlim_n H^q/R^q_n \ar[r] & \varprojlim_n
      H^q_n \ar[r] & \varprojlim_n Q^q_n. }
  \]
  Since the system $(Q^q_n)$ is Artin--Rees zero, it follows
  immediately that $\varprojlim_n Q^q_n = 0$. Moreover, the filtration
  $(R^q_n)$ on $H^q$ is $I$-good and since $H^q$ is a finitely
  generated $R$-module, it follows that the natural map
  $H^q \to \varprojlim_n H^q/R^q_n$ is an isomorphism. What results
  from all of this is an isomorphism:
  \[
    \mathrm{H}^q(\cX,\cF) \simeq \varprojlim_n
    \mathrm{H}^q(\cX,\cF/\cI^{n+1}\cF).
  \]
  That is, Zariski formal functions holds for the pair $(\cX,\cX_0)$.
  
  We first establish that the filtration $(R^q_n)_{n\geq -1}$ on $H^q$ is
  $I$-good. To see this, we first
  note that $R_{-1} = H^q$. We now apply the previous discussion to the
  $\cO_X$-module $\cI\cF$. It follows that
  ${L}^q_\ast=\oplus_{n\geq 0} {L}^q_n$ is
  a finitely generated $R_\ast$-module. But the graded $R_\ast$-module
  $\oplus_{n\geq 0}R^q_n$ is the image of the graded $R_\ast$-module
  homomorphism
  \[
    \oplus_{n\geq 0} {L}^q_n \to
    \oplus_{n\geq 0} H^q,
  \]
  and so $\oplus_{n\geq -1} R^q_n$ is also a finitely generated graded
  $I_\ast$-module. By \cite[Lem.~10.8]{MR0242802}, it follows that the
  filtration $(R^q_n)_{n\geq -1}$ is $I$-good.

  We next prove that the inverse system $(Q^q_n)$ is Artin--Rees
  zero. First observe that $Q_\ast^q = \oplus_{n\geq 0} Q_n^q$ is a
  $R_\ast$-submodule of $L_\ast^{q+1} = \oplus_{n\geq 0} L_n^{q+1}$,
  which is a finitely generated $R_\ast$-module. Hence, $Q_\ast^q$ is
  a finitely generated $R_\ast$-module. In particular, there exist
  integers $h$, $l\geq 0$ such that $I_hQ_k^q = Q_{h+k}^q$ for all
  $k\geq l$. But $Q_n^q$ is always a quotient of $H^q_n$ and $H^q_n$
  is annihilated by $I^{n+1}$ and so if $m\geq l+h$, then
  write $m=th+r+l$, where $0\leq r<h$. Then
  \[
    I^{l+h+1}Q_{m}^q = I^{l+h+1}I_{th}Q_{l+r} \subset I^{l+r+1}I_{th}Q_{l+r} =0.
  \]
  It follows from \eqref{eq:4} and the above that if
  $s=(h+2)(l+h)\geq l+h+1$, then for $t\geq 0$ we have
  \[
    \nu_{\cI\cF,t+s,t}^{q+1}(Q^q_{t+s}) =
    \nu_{\cI\cF,t+s,t}^{q+1}(I_{(h+1)(l+h)}Q^q_{t+h+l})
    = I^{(h+1)(l+h)}Q^q_{t+l+h} \subset I^{l+h+1}Q^q_{t+l+h} = 0.
  \]
  Finally, the exact sequence of \eqref{eq:3} and basic properties of
  the Artin--Rees condition shows that it suffices to prove that the
  inverse systems $(H^q/R_n)$ and $(Q_n)$ satisfy the uniform
  Artin--Rees condition. Since $(Q_n)$ is Artin--Rees zero, it
  satisfies the uniform Artin--Rees condition. Further, since
  $(H^q/R_n)$ is a surjective system it trivially satisfies the uniform
  Artin--Rees condition. This proves \eqref{TI:strong-cp-implies-ff:ff}

  We now prove \eqref{TI:strong-cp-implies-ff:fcp}. Note that
  $\oplus_{n\geq 0} \cI^n/\cI^{n+1}$ is a coherent
  $\oplus_{n\geq 0} \cI^n$-algebra. In particular, the cohomological
  properness of
  $\underline{\Spec}_{\cO_{\cX}}(\oplus_{n\geq 0} \cI^n) \to
  \underline{\Spec}_{\cO_{\cY}}(\oplus_{n\geq 0} f_*(\cI^{n}))$
  implies that $\oplus_{n\geq 0} f_*(\cI^n/\cI^{n+1})$ is a coherent
  $\oplus_{n\geq 0} f_*(\cI^{n})$-algebra. It follows that:
    \begin{enumerate}[{\rm(1)}]
    \item $\oplus_{n\geq 0} \mathrm{H}^0(\cX_0,\cI^n/\cI^{n+1})$ is a
      finitely generated $\Gamma(\cX,\cO_{\cX})$-algebra; and
    \item
    $\underline{\Spec}_{\cO_{\cX_0}}(\oplus_{n\geq 0} \cI^n/\cI^{n+1})
    \to \Spec (\oplus_{n\geq 0}\mathrm{H}^0(\cX_0,\cI^n/\cI^{n+1}))$
    is cohomologically proper.
  \end{enumerate}
  The arguments of \cite[III.3.4.4]{ref:ega} now apply verbatim to
  prove \eqref{TI:strong-cp-implies-ff:fcp}.
\end{proof}
\section{Permanence of properties}\label{sec:permanence}
Let $f\colon \cX \to \cY$ be a morphism of noetherian algebraic
stacks, $\cY_0 \subset \cY$ a closed substack, and
$\cX_0 \subset f^{-1}(\cY_0)$ a closed substack. Then there is an
induced diagram:
\[
\xymatrix@+1pc{\modd(\cX) \ar@<-1ex>[r]_{c^*_{\cX}}\ar@<-1ex>[d]_{f_*} & \modd(\hat{\cX}) \ar@<-1ex>[l]_{c_{\cX,*}} \ar@<-1ex>[d]_{\hat{f}_*} \\ \modd(\cY) \ar@<-1ex>[r]_{c^*_{\cY}} \ar@<-1ex>[u]_{f^*}& \modd(\hat{\cY}) \ar@<-1ex>[u]_{\hat{f}^*} \ar@<-1ex>[l]_{c_{\cY,*}},}
\]
as well as natural isomorphisms of functors
$c_{\cX}^*f^* \simeq \hat{f}^*c_{\cY}^*$ and
$f_*c_{\cX,*} \simeq c_{\cY,*}\hat{f}_*$.  Because of the lack of
functoriality of the lisse-\'etale topos, the left derived functors of
$f^*$ and $\hat{f}^*$ are somewhat subtle if $f$ is not smooth. The
derived functors on the level of unbounded derived categories
$\RDERF f_* \colon D(\cX) \to D(\cY)$ and
$\RDERF \hat{f}_*\colon D(\hat{\cX}) \to D(\hat{\cY})$ always exist,
however. As $c_{\cX,*}$ and $c_{\cY,*}$ are exact (they are just
forgetful functors), it follows that we also have natural isomorphisms
$(\RDERF f_*)\circ c_{\cX,*} \simeq c_{\cY,*}\circ\RDERF\hat{f}_*$. If
$\cF \in D(\cX)$, then there is thus always a comparison morphism:
\begin{equation} \label{eqn:comparison-morphism} (\RDERF
  f_*\cF)^{\wedge} = c_{\cY}^*(\RDERF f_*\cF) \to \RDERF
  \hat{f}_*c_{\cX}^*\cF = \RDERF \hat{f}_* \hat{\cF},
\end{equation}
which comes from the adjoint of the morphism:
\[
  \RDERF f_*\cF \to (\RDERF f_*)\circ c_{\cX,*}\circ c_{\cX}^*\cF
  \simeq c_{\cY,*} \circ \RDERF \hat{f}_* \circ c_{\cX}^*\cF.
\]
We now introduce a relative version of \Cref{def:ff-cc} (cf.\
\cite[Defn.\ 1.1.3]{ref:hlp}).
\begin{definition}
  Let $f\colon \cX \to \cY$ be a morphism of noetherian algebraic
  stacks, $\cY_0 \subset \cY$ is a closed substack, and
  $\cX_0 \subset f^{-1}(\cY_0)$ is closed substack. We say that the
  triple $(f,\cX_0,\cY_0)$ satisfies \emph{relative formal
    functions} (resp.\ \emph{relatively (derived) coherently
    complete}) if $(\cX_{\hat{R}},(\cX_0)_{\hat{R}})$ satisfies formal
  functions (resp.\ (derived) coherent completeness) for all
  noetherian rings $R$ and \emph{smooth} morphisms
  $\rho\colon \Spec R \to \cY$, where $\hat{R}$ denotes the adic
  completion of $R$ with respect to the ideal defining
  $\rho^{-1}(\cY_0) \subset \Spec R$,
  $\cX_{\hat{R}} = \cX \times_{\cY} \Spec \hat{R}$, and
  $(\cX_0)_{\hat{R}}$ is the preimage of $\cX_0$ under
  $\cX_{\hat{R}} \to \cX$.
\end{definition}
We have the following simple lemma.
\begin{lemma} \label{L:comparison-relative-formal} Let
  $f\colon \cX \to \cY$ be a morphism of noetherian algebraic stacks,
  $\cY_0 \subset \cY$ a closed substack, and
  $\cX_0 \subset f^{-1}(\cY_0)$ a closed substack. If the triple
  $(f,\cX_0,\cY_0)$ satisfies relative formal functions, then for
  every $\cF \in D^+_{\coh}(\cX)$ the natural comparison morphism:
  \[
    (\RDERF f_\ast \cF)^{\wedge}
    \to \RDERF\hat{f}_\ast \hat{\cF}
  \]
  is an isomorphism. If, in addition, the pair $(\cY, \cY_0)$
  satisfies formal functions, then so does $(\cX, \cX_0)$.
\end{lemma}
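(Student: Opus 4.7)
The plan is to reduce Part 1 to a smooth affine cover of $\cY$ where relative formal functions applies, and then derive Part 2 via Part 1 combined with the Leray spectral sequence and formal functions on $(\cY,\cY_0)$.

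For Part 1, since both sides of the comparison morphism are complexes on the lisse-\'etale site of $\cY$, it suffices to verify the assertion after restriction to each object of this site. So take a smooth affine chart $\rho\colon U = \Spec R \to \cY$ with $R$ noetherian, and let $\hat R$ denote the completion of $R$ along $\rho^{-1}(\cY_0)$. By flat base change applied first to the smooth morphism $\rho$ and then to the flat morphism $\Spec \hat R \to \Spec R$, the left-hand side becomes $\RDERF f_{\hat R,*}\cF_{\hat R}$ on $\Spec \hat R$, while the right-hand side becomes $\RDERF \widehat{f_{\hat R}}_* \widehat{\cF_{\hat R}}$, where the completion is along $(\cX_0)_{\hat R}$. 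As $\Spec \hat R$ is affine and the complexes have quasi-coherent cohomology, each is determined by its global sections
\[
\RDERF\Gamma(\cX_{\hat R},\cF_{\hat R}) \qquad \text{and} \qquad \RDERF\Gamma(\widehat{\cX_{\hat R}},\widehat{\cF_{\hat R}}).
\]
The relative formal functions hypothesis applied to $\rho$ says precisely that $(\cX_{\hat R},(\cX_0)_{\hat R})$ satisfies formal functions; upgrading to derived formal functions via \Cref{P:underived-derived-comparison}\eqref{PI:underived-derived-comparison:ff} and evaluating at $\cF_{\hat R}\in D^+_{\coh}(\cX_{\hat R})$ yields the required isomorphism on global sections, hence on sheaves.

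For Part 2, assume that $(\cY,\cY_0)$ satisfies formal functions and let $\cF\in \Coh(\cX)$. The Leray spectral sequence together with Part 1 gives
\[
\RDERF\Gamma(\hat\cX,\hat\cF) \simeq \RDERF\Gamma(\hat\cY,\RDERF\hat f_*\hat\cF) \simeq \RDERF\Gamma(\hat\cY,(\RDERF f_*\cF)^{\wedge}),
\]
so the problem reduces to identifying this last expression with $\RDERF\Gamma(\cY,\RDERF f_*\cF) \simeq \RDERF\Gamma(\cX,\cF)$. This is an instance of formal functions on $(\cY,\cY_0)$ applied to $\RDERF f_*\cF$.

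The main obstacle in Part 2 is that $\RDERF f_*\cF$ need not have coherent cohomology sheaves, whereas formal functions on $(\cY,\cY_0)$---equivalently, derived formal functions via \Cref{P:underived-derived-comparison}\eqref{PI:underived-derived-comparison:ff}---applies a priori only to objects of $D_{\coh}(\cY)$. I would handle this via the hypercohomology spectral sequence, reducing to proving that $\mathrm{H}^p(\cY,\RDERF^q f_*\cF) \to \mathrm{H}^p(\hat\cY,(\RDERF^q f_*\cF)^{\wedge})$ is an isomorphism for each $q$. Since $\cY$ is noetherian, each $\RDERF^q f_*\cF \in \QCoh(\cY)$ is a filtered colimit of its coherent subsheaves; using that cohomology on $\cY$ commutes with filtered colimits of quasi-coherent sheaves and that $c_\cY^*$ commutes with colimits (being a left adjoint), the formal functions identification extends from $\Coh(\cY)$ to $\QCoh(\cY)$, completing the argument.
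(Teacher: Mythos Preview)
Your argument for Part 1 is essentially the paper's: both restrict to smooth affine charts $\Spec R \to \cY$, pass to the completion $\hat R$, and invoke formal functions on $(\cX_{\hat R},(\cX_0)_{\hat R})$. The paper phrases this as a computation of cohomology sheaves of both sides, while you pass through derived formal functions and global sections; the content is the same.

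For Part 2 you are more careful than the paper. The paper simply writes the chain
\[
\rgamma(\cX,\cF) \simeq \rgamma(\cY,\RDERF f_*\cF) \simeq \rgamma(\hat\cY,(\RDERF f_*\cF)^\wedge) \simeq \rgamma(\hat\cY,\RDERF\hat f_*\hat\cF) \simeq \rgamma(\hat\cX,\hat\cF),
\]
treating the second isomorphism as an instance of formal functions on $(\cY,\cY_0)$, even though $\RDERF f_*\cF$ need not lie in $D^+_{\coh}(\cY)$. You correctly flag this and attempt a fix via filtered colimits.

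However, your fix has its own gap. You need $\mathrm{H}^p(\hat\cY,-)$ to commute with filtered colimits, applied to the system $\{c_\cY^* M_\lambda\}$. The standard result---that cohomology commutes with filtered colimits of \emph{quasi-coherent} sheaves on a noetherian stack---does not apply here: the sheaves $c_\cY^*M_\lambda$ are $\oh_{\hat\cY}$-modules, not quasi-coherent $\oh_\cY$-modules, and commutation of lisse-\'etale cohomology with filtered colimits of arbitrary abelian sheaves is not a standard fact (the lisse-\'etale site is ``big''). One way to make this rigorous is to fix a smooth hypercover of $\cY$ by affines and argue that both $\RDERF\Gamma(\cY,-)$ and $\RDERF\Gamma(\hat\cY,c_\cY^*(-))$ are computed by the same \v{C}ech-type complex on quasi-coherent inputs, which then visibly commutes with filtered colimits; but this needs to be spelled out.
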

\begin{proof}
  The second claim follows from the first since for any coherent sheaf
  $\cF$ on $\cX$, we have isomorphisms:
  \[
    \rgamma(\cX, \cF) \simeq \rgamma(\cY, \RDERF f_\ast \cF) \simeq
    \rgamma(\widehat{\cY}, (\RDERF f_\ast \cF)^{\wedge}) \simeq
    \rgamma( \widehat{\cY}, \RDERF\widehat{f}_\ast \hat{\cF}) \simeq
    \rgamma(\widehat{\cX}, \widehat{\cF}).
  \]
  To treat the first claim, let $\Spec R \to \cY$ be a smooth
  morphism, $I \subset R$ the ideal defined by the preimage of
  $\cY_0 \subset \cY$, and $\widehat{R}$ the $I$-adic completion of
  $R$. Let $f_R \colon \cX_R \to \Spec R$ be the base change morphism,
  where $\cX_R = \cX \times_\cY \Spec R$. For an integer $i$,
  $\mathscr{H}^i((\RDERF f_\ast \cF)^{\wedge})$ is the sheafification
  of the presheaf:
  \[
    (\spec R \to \cY) \mapsto \mathrm{H}^i(\cX_R, \cF_R) \tensor_R
    \widehat{R} \simeq \mathrm{H}^i(\cX_{\hat{R}},\cF_{\hat{R}}),
  \]
  with the last isomorphism by flat base change. One similarly finds
  that $\mathscr{H}^i(\RDERF\widehat{f}_\ast \hat{\cF})$ is the
  sheafification of the presheaf:
  \[
    (\spec R \to \cY) \mapsto \mathrm{H}^i(\widehat{\cX}_{\hat{R}},
    \hat{\cF}_{{\hat{R}}} ).
  \]
  By assumption, the pair $(\cX_{\hat{R}},(\cX_0)_{\hat{R}})$
  satisfies formal functions, and the result follows.
\end{proof}
The following is a reformulation of \Cref{T:strong-cp-implies-ff} in
the relative and derived situation.
\begin{theorem}\label{T:strong-cp-comparison}
  Let $f\colon \cX \to \cY$ be a finite type morphism of noetherian
  algebraic stacks, $\cY_0 \subset \cY$ a closed substack, and
  $\cX_0 \subset f^{-1}(\cY_0)$ a closed substack. Assume that the
  triple $(f,\cX_0,\cY_0)$ is strongly cohomologically proper.
  \begin{enumerate}[{\rm(1)}]
  \item \label{TI:comparison:ff} The triple $(f,\cX_0,\cY_0)$
    satisfies relative formal functions.
  \item \label{TI:comparison:compare} The comparison morphism
    $(\RDERF f_* \cF)^\wedge \to \RDERF \hat{f}_*\hat{\cF}$ is an
    isomorphism for all $\cF \in D^+_{\Coh}(\cX)$.
  \item \label{TI:comparison:formal} The functor $\RDERF \hat{f}_*$
    sends $D^+_{\Coh}(\hat{\cX})$ to $D^+_{\Coh}(\hat{\cY})$.
  \end{enumerate}
\end{theorem}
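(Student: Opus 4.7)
The plan is to reduce each of the three assertions to an affine smooth chart on $\cY$, where the corresponding part of \Cref{T:strong-cp-implies-ff} applies directly. The key reduction, underpinning all three parts, is that strong cohomological properness is preserved under flat base change on the target. Indeed, the formation of the Rees algebras $\oplus_{n\geq 0}\cI^n$ and $\oplus_{n\geq 0}\cJ^n$ commutes with flat base change (as $\mathrm{Tor}$ vanishes), and ordinary cohomological properness is preserved under flat base change via the flat base change formula for $\RDERF f_*$. In particular, strong cohomological properness persists after smooth base change $\Spec R \to \cY$ and subsequent completion $\Spec \hat R \to \Spec R$, the latter being flat because $R$ is noetherian.

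With this reduction in hand, part \eqref{TI:comparison:ff} is immediate: for every smooth $\rho\colon \Spec R \to \cY$ with $I \subset R$ the ideal defining $\rho^{-1}(\cY_0)$, the base-changed triple over $\Spec \hat R$ is strongly cohomologically proper, and $\hat R$ is $I\hat R$-adically complete, so \Cref{T:strong-cp-implies-ff}\eqref{TI:strong-cp-implies-ff:ff} gives formal functions for the pair $(\cX_{\hat R},(\cX_0)_{\hat R})$. This is precisely the definition of relative formal functions. Part \eqref{TI:comparison:compare} then follows immediately from \eqref{TI:comparison:ff} combined with \Cref{L:comparison-relative-formal} applied to $\cF \in D^+_{\coh}(\cX)$.

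For part \eqref{TI:comparison:formal}, I would argue that coherence on $\hat \cY$ can be detected smooth-locally, namely after pulling back $\RDERF \hat f_* \mathfrak F$ along a smooth chart $\Spec \hat R \to \hat \cY$ arising from a smooth $\Spec R \to \cY$. Using flat base change for $\RDERF \hat f_*$ along the flat completion morphism $c_{\cY}$, the pullback is identified with $\RDERF (\hat f_{\hat R})_* (\mathfrak F|_{\hat \cX_{\hat R}})$. A standard truncation argument on $\mathfrak F \in D^+_{\coh}(\hat \cX)$ reduces to the case where $\mathfrak F$ is a single coherent formal sheaf, in which case finite generation of each $\mathrm H^q$ as an $\hat R$-module is exactly \Cref{T:strong-cp-implies-ff}\eqref{TI:strong-cp-implies-ff:fcp}. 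The main obstacle here is bookkeeping the compatibility of derived pushforward with the completion morphism and with smooth base change; this relies on the flatness of $c$ and the equivalence $\Coh(\hat \cX) \simeq \varprojlim \Coh(\cX_n)$ of \cite[Thm.~2.3]{ref:conrad}, together with the fact that $\mathfrak F |_{\hat \cX_{\hat R}}$ remains coherent and represents the corresponding compatible system of coherent sheaves on the nilpotent thickenings after base change.
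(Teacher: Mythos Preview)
Your proposal is correct and follows essentially the same route as the paper. The paper's proof is extremely terse—``By \Cref{L:comparison-relative-formal}, \eqref{TI:comparison:ff}$\Rightarrow$\eqref{TI:comparison:compare}. The other claims follow from \Cref{T:strong-cp-implies-ff}''—and you have simply unpacked what is implicit there: that strong cohomological properness is stable under flat (in particular smooth and completion) base change to a noetherian target, so one may apply \Cref{T:strong-cp-implies-ff} on each affine chart $\Spec \hat R$ to obtain \eqref{TI:comparison:ff} and \eqref{TI:comparison:formal}.
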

\begin{proof}
  By \Cref{L:comparison-relative-formal},
  \eqref{TI:comparison:ff}$\Rightarrow$\eqref{TI:comparison:compare}. The
  other claims follow from \Cref{T:strong-cp-implies-ff}.
\end{proof}

Cohomologically proper morphisms and relative formal functions are
stable under composition. The analogous question for coherent
completeness seems more subtle.  The following theorem asserts that
they ascend under proper representable morphisms, however.

\begin{theorem}\label{thm:properness-ascent}
  Let $\cY$ be a noetherian algebraic stack with affine stabilizers and
  $\cY_0 \subset \cY$ a closed substack. Let $f \colon \cX \to \cY$ be
  a proper and representable morphism. If $(\cY,\cY_0)$ satisfies
  formal functions and is coherently complete, then so does $(\cX,f^{-1}(\cY_0))$.
\end{theorem}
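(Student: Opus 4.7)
Set $\cX_0 \coloneqq f^{-1}(\cY_0)$. The strategy is to first establish formal functions for $(\cX,\cX_0)$---which by \Cref{C:fullyfaithful} automatically gives full faithfulness of $\Coh(\cX) \to \Coh(\hat{\cX})$ with essential image stable under kernels, cokernels, and extensions---and then to verify essential surjectivity via the concrete criterion at the end of \Cref{C:fullyfaithful}: for every nonzero $\mathfrak{F} \in \Coh(\hat{\cX})$, it suffices to produce some $\cF \in \Coh(\cX)$ together with a nonzero morphism $c_{\cX}^*\cF \to \mathfrak{F}$.

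For formal functions, observe that $f$ is proper and hence universally cohomologically proper by \Cref{E:proper-is-cp}. Then \Cref{E:adic-strong-cp} implies that the triple $(f,\cX_0,\cY_0)$ is strongly cohomologically proper. By \Cref{T:strong-cp-comparison}\eqref{TI:comparison:ff}, this triple satisfies relative formal functions, and combined with the formal functions hypothesis for $(\cY,\cY_0)$, \Cref{L:comparison-relative-formal} produces formal functions for $(\cX,\cX_0)$.

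For essential surjectivity, the first attempt is the adjunction approach: given nonzero $\mathfrak{F} \in \Coh(\hat{\cX})$, form $\hat{f}_* \mathfrak{F} \in \Coh(\hat{\cY})$, which is coherent by \Cref{T:strong-cp-comparison}\eqref{TI:comparison:formal}. When $\hat{f}_* \mathfrak{F} \neq 0$, coherent completeness of $(\cY,\cY_0)$ yields $\cG \in \Coh(\cY)$ with $c_{\cY}^*\cG \simeq \hat{f}_*\mathfrak{F}$, and the counit of adjunction produces a morphism
\[
  c_{\cX}^*(f^*\cG) \;\simeq\; \hat{f}^* c_{\cY}^*\cG \;\simeq\; \hat{f}^*\hat{f}_*\mathfrak{F} \;\longrightarrow\; \mathfrak{F}
\]
that is nonzero, being adjoint to the identity on the nonzero sheaf $\hat{f}_* \mathfrak{F}$; so $\cF = f^*\cG$ works.

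The main obstacle is the case $\hat{f}_* \mathfrak{F} = 0$, which genuinely occurs for proper morphisms---e.g.\ when $\mathfrak{F}$ is supported on a contracted fiber and has vanishing cohomology there. To address this, the plan is to exploit the hypothesis that $f$ is representable: then $\cX$ is a proper algebraic space over $\cY$, and Chow's lemma provides a projective birational $g\colon \cX' \to \cX$ with $f \circ g$ projective, equipping $\cX'$ with a relative very ample line bundle $\cO(1)$ over $\cY$. After pulling $\mathfrak{F}$ back to $\hat{\cX}'$ and twisting by a large power of $\cO(1)$, Serre vanishing should force the pushforward along the projective morphism $f \circ g$ to be nonzero, and the direct argument then yields a nonzero map into a twist of $g^*\mathfrak{F}$; a Noetherian induction on the support of $\mathfrak{F}$, using stability of the essential image under kernels, cokernels, and extensions from the first step, should then close the argument. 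The delicate technical point---and the hardest part of the proof---is ensuring compatibility of these pullback, twisting, and pushforward constructions with completion along $\cY_0$, and descending the resulting data from $\cX'$ back to $\cX$.
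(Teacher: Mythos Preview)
Your formal-functions argument is correct and matches the paper exactly. Your Case~1 (when $\hat{f}_*\mathfrak{F}\neq 0$) is also fine.

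The differences and gaps are all in the essential-surjectivity argument for the hard case. The paper does \emph{not} use the ``nonzero map'' criterion from \Cref{C:fullyfaithful}. Instead it works derivedly: it first reduces (via Rydh's Chow lemma \emph{plus an additional blow-up}) to the case where $f$ is projective with relatively ample $\mathcal{L}$ \emph{and} $\cX_0\subset\cX$ is a Cartier divisor. In that situation, for $\mathfrak{F}\in\Coh(\hat{\cX})$ the projection formula gives
\[
\RDERF f_*\bigl(\mathcal{L}^{\otimes n}\otimes^{\LDERF} c_{\qcoh,*}\mathfrak{F}\bigr)\;\simeq\; d_{\qcoh,*}\RDERF\hat{f}_*\bigl(\hat{\mathcal{L}}^{\otimes n}\otimes^{\LDERF}\mathfrak{F}\bigr)\in D^b_{\coh}(\cY)
\]
for all $n$ (using derived coherent completeness of $(\cY,\cY_0)$ from \Cref{P:underived-derived-comparison}), and then a characterization of $D^b_{\coh}$ via ample twists (\cite[Thm.~3.8]{ref:hall}) forces $c_{\qcoh,*}\mathfrak{F}\in D^b_{\coh}(\cX)$. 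The Cartier hypothesis now enters: $\cO_{\cX_0}$ is perfect, so the cone $\mathfrak{K}$ of $c^*c_{\qcoh,*}\mathfrak{F}\to\mathfrak{F}$ satisfies $\mathfrak{K}\otimes^{\LDERF} c^*\cO_{\cX_0}\simeq 0$, and Nakayama on the top cohomology sheaf yields $\mathfrak{K}=0$. The general case follows by applying \Cref{thm:descent}\eqref{thm:descent3} to the proper surjection $\cX'\to\cX$.

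Your outline omits the Cartier reduction entirely, and this is not cosmetic: without it there is no clean Nakayama step, and your proposed substitute---Serre nonvanishing for $\hat{f}'_*(\hat{g}^*\mathfrak{F}(n))$ in the \emph{formal} setting---is exactly the kind of ``compatibility with completion'' you flag as unresolved. It can be made to work (restrict to $\cX'_0$, use ordinary Serre, then lift), but you have not done it. For the descent from $\cX'$ back to $\cX$, the tool you need is \Cref{thm:descent}\eqref{thm:descent3} (proper surjective case, $n=1$ suffices), which is precisely what the paper invokes; your ``Noetherian induction on the support of $\mathfrak{F}$'' is not the right mechanism here, since the support lives in $\hat{\cX}$ and the algebraization problem does not obviously localize on it.
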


\begin{remark} \label{rmk:hlp-properness-ascent}
    This theorem is similar in spirit to the statement of \cite[Thm.~4.2.1]{ref:hlp} that cohomologically projective morphisms (see \cite[Def.4.1.3]{ref:hlp}) are formally proper.  Projective morphisms are cohomologically projective and thus formally proper.  Using Rydh's Chow Lemma \cite{ref:rydh-equivariant} and the fact that formal properness descends under proper surjective morphisms (\Cref{rmk:formal-properness-descends}), it follows that proper morphisms are also formally proper.
\end{remark}

\begin{proof}[Proof of \Cref{thm:properness-ascent}]
  Let $\cX_0 = f^{-1}(\cY_0)$.   We have the following diagram:
  \[
    \xymatrix@C+2pc{D^+_{\qcoh}(\cX) \ar[r] \ar[d]_-{\RDERF f_*} & D_{\qcoh}(\cX) \ar@<+0.5ex>[r]^-{c^*} & \ar@<+0.5ex>[l]^-{c_{{\qcoh},*}} D(\hat{\cX}) & \ar[l] D^+(\hat{\cX}) \ar[d]^{\RDERF \hat{f}_*}\\
      D^+_{\qcoh}(\cY) \ar[r] & D_{\qcoh}(\cY) \ar@<+0.5ex>[r]^-{d^*} &
      \ar@<+0.5ex>[l]^-{d_{{\qcoh},*}} D(\hat{\cY}) & \ar[l]
      D^+(\hat{\cY}).}
  \]
  Since $f$ is proper and representable, it is universally
  cohomologically proper \cite[Thm.\ 1.2]{ref:olsson}, so the
  triple $(f,\cX_0,\cY_0)$ is strongly cohomologically proper
  (\Cref{E:adic-strong-cp}). In particular,
  \Cref{T:strong-cp-comparison} implies that $f$ satisfies relative
  formal functions and $\RDERF \hat{f}_*$ sends
  $D^+_{\coh}(\hat{\cX})$ to $D^+_{\coh}(\hat{\cY})$. A simple
  calculation using homotopy limits shows that because $f$ is
  representable, $\RDERF \hat{f}_*$ even sends $D^b_{\coh}(\hat{\cX})$
  to $D^b_{\coh}(\hat{\cY})$. In particular,
  \Cref{L:comparison-relative-formal} implies that $(\cX,\cX_0)$
  satisfies formal functions. By
  \Cref{P:underived-derived-comparison}\eqref{PI:underived-derived-comparison:cc}
  it remains to prove that \[\Coh(\cX) \to \Coh(\hat{\cX})\] is
  essentially surjective.  We first prove this under the following
  assumptions:
  \begin{enumerate}
  \item $f$ is projective (so comes with a relatively ample line bundle $\mathcal{L}$); and 
  \item $\cX_0$ is a Cartier divisor. 
  \end{enumerate}
  In this case, let $\mathfrak{F} \in \Coh(\hat{\cX})$ and let $n\in \bZ$. Then by the projection formula \cite[Thm.~A.12]{ref:hall}, 
  \begin{align*}
    \RDERF f_*( \mathcal{L}^{\otimes n} \otimes^{\LDERF}_{\oh_{\cX}} c_{{\qcoh},*}\mathfrak{F}) &\simeq     \RDERF f_*c_{{\qcoh},*}( {\hat{\mathcal{L}}}^{\otimes n} \otimes^{\LDERF}_{\oh_{\hat{\cX}}} \mathfrak{F}) && \\
    &\simeq d_{{\qcoh},*}\RDERF \hat{f}_*( {\hat{\mathcal{L}}}^{\otimes n} \otimes^{\LDERF}_{\oh_{\hat{\cX}}} \mathfrak{F}).
  \end{align*}
  But
  $\RDERF \hat{f}_*( {\hat{\mathcal{L}}}^{\otimes n}
  \otimes^{\LDERF}_{\oh_{\hat{\cX}}} \mathfrak{F}) \in
  D^b_{\coh}(\hat{\cY})$ and since $(\cY,\cY_0)$ is derived coherently
  complete, by  (\Cref{P:underived-derived-comparison}\eqref{PI:underived-derived-comparison:cc})
$d_{{\qcoh},*}\RDERF \hat{f}_*( {\hat{\mathcal{L}}}^{\otimes n}
  \otimes^{\LDERF}_{\oh_{\hat{\cX}}} \mathfrak{F}) \in D^b_{\coh}(\cY).$

  Let $p\colon \Spec A \to \cY$ be a smooth cover by an affine
  scheme. Let $\cX_A = \cX\times_{\cY} \Spec A$ and let
  $f_A \colon \cX_A \to \Spec A$ and $q \colon \cX_A \to \cX$ be the
  induced morphisms. We may choose $p$ so that $q^*\mathcal{L}$ is ample relative to $f_A$. Then
  \begin{align*}
    \RDERF \Gamma(\cX_A, (q^*\mathcal{L})^{\otimes n} \otimes^{\LDERF}_{\oh_{\cX_A}} q^*c_{{\qcoh},*}\mathfrak{F}) &\simeq \RDERF \Gamma(\cX_A, q^*(\mathcal{L}^{\otimes n} \otimes^{\LDERF}_{\oh_{\cX}} c_{{\qcoh},*}\mathfrak{F})) \\
                                                                                                       &\simeq \RDERF \Gamma(\Spec A, \RDERF f_{A,*}q^*(\mathcal{L}^{\otimes n} \otimes^{\LDERF}_{\oh_{\cX}} c_{{\qcoh},*}\mathfrak{F}))\\
                                                                                                       &\simeq \RDERF \Gamma(\Spec A, p^*\RDERF f_*(\mathcal{L}^{\otimes n} \otimes^{\LDERF}_{\oh_{\cX}} c_{{\qcoh},*}\mathfrak{F}))\\
    &\in D^b_{\coh}(A).
  \end{align*}
  Since this is true for all $n \in \bZ$ and $q^*\mathcal{L}$ is ample relative to
  $f_A$, it follows from \cite[Thm.\ 3.8]{ref:hall} that
  $q^*c_{{\qcoh},*}\mathfrak{F} \in D^b_{\coh}(\cX_A)$. By smooth
  descent, $c_{{\qcoh},*}\mathfrak{F} \in D^b_{\coh}(\cX)$. It remains
  to prove that the adjunction morphism
  \begin{eqnarray} \label{eq:adjunction_iso}
  c^*c_{{\qcoh},*}\mathfrak{F} \to \mathfrak{F} 
  \end{eqnarray}
  is an isomorphism.
  
To prove this, observe that $\oh_{\cX_0}$ is a perfect complex on $\cX$ since $\cX_0 \subseteq \cX$ is Cartier. It follows from the
  projection formula that
  \[
    (c^*c_{{\qcoh},*}\mathfrak{F})\otimes^{\LDERF}_{\oh_{\hat{\cX}}}
    c^*\oh_{\cX_0} \simeq
    c^*(c_{{\qcoh},*}\mathfrak{F}\otimes^{\LDERF}_{\oh_{{\cX}}}
    \oh_{\cX_0}) \simeq c^*c_{{\qcoh},*}(\mathfrak{F}
    \otimes^{\LDERF}_{\oh_{\hat{\cX}}} c^*\oh_{\cX_0}) \simeq \mathfrak{F}
    \otimes^{\LDERF}_{\oh_{\hat{\cX}}} c^*\oh_{\cX_0}.
  \]
In summary, if $\mathfrak{K}$ denotes the cone of (\ref{eq:adjunction_iso}), we have shown that
  $\mathfrak{K} \otimes^{\LDERF}_{\oh_{\hat{\cX}}} c^*\oh_{\cX_0}
  \simeq 0$. To finish the proof, suppose that $\mathfrak{K} \neq 0$. Then since $\mathfrak{K} \in D^b_{\coh}(\widehat{\cX})$, we may choose a largest integer $q$ such that $\mathscr{H}^q(\mathfrak{K}) \neq 0$. Then
  \[ \mathscr{H}^q(\mathfrak{K}) \otimes_{\oh_{\hat{\cX}}} c^*\oh_{\cX_0} \simeq 
\mathscr{H}^q(\mathfrak{K} \otimes^{\LDERF}_{\oh_{\hat{\cX}}} c^*\oh_{\cX_0} )\simeq 0\]
and so by Nakayama's Lemma, $\mathscr{H}^q(\mathfrak{K}) \simeq 0$, which is a contradiction.

In general, Rydh's Chow Lemma \cite{ref:rydh-equivariant} provides a
blow-up $\cX' \xrightarrow{\pi} \cX$ such that the composition
$g=f\circ \pi$ is projective. We may replace $\cX'$ by an additional
blow-up so that $\cX_0'=g^{-1}(\cY_0) \subseteq \cX'$ becomes
Cartier. It follows from the case already considered that
$(\cX',\cX'_0)$ is derived coherently complete. By
\Cref{thm:descent}\eqref{thm:descent3}, it follows that $(\cX,\cX_0)$
is derived coherently complete.
\end{proof}

We also have the following easy result.

\begin{lemma} \label{lem:permanence-under-larger-substacks}
  Let $\cX$ be a noetherian algebraic stack and $\cX_0 \subset \cX$ a closed substack.  Suppose that $\cZ \subset \cX$ is a closed substack such that $|\cX_0| \subseteq |\cZ|$.  If $(\cX, \cX_0)$ satisfies formal functions (resp. is coherently complete), then the same holds for $(\cX, \cZ)$.  
\end{lemma}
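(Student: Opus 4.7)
The plan is to reduce both statements to the hypothesis on $(\cX,\cX_0)$ by exploiting the inclusion $\cJ^m\subseteq\cI$ that follows from $|\cX_0|\subseteq|\cZ|$ and noetherianness of $\cX$; here $\cI,\cJ\subseteq\cO_\cX$ are the coherent ideals defining $\cX_0$ and $\cZ$. Consequently $\cJ^{mk}\subseteq\cI^k$ for all $k\geq 1$, which is equivalent to having closed immersions $\cX_{k-1}\hookrightarrow\cZ_{mk-1}$, and the subset $\{m(k+1)-1\}_{k\geq 0}$ is cofinal in $\{n\}_{n\geq 0}$.

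For coherent completeness, starting from $\mathfrak{F}=(\mathfrak{F}_n)_n\in\varprojlim_n\Coh(\cZ_n)$, I would form the compatible system $\mathfrak{G}_k:=\mathfrak{F}_{m(k+1)-1}|_{\cX_k}\in\varprojlim_k\Coh(\cX_k)$ (compatibility is immediate from compatibility of $(\mathfrak{F}_n)_n$), and then apply coherent completeness of $(\cX,\cX_0)$ to produce $\cF\in\Coh(\cX)$ with $\cF|_{\cX_k}\cong\mathfrak{G}_k$. The crux is to verify $\cF|_{\cZ_n}\cong\mathfrak{F}_n$. To this end, I would first note that $(\cZ_n,\cX_0)$ is itself coherently complete: the equivalence $\Coh(\cX)\simeq\Coh(\widehat{\cX}_{\cX_0})$ is exact and restricts to the full subcategories of $\cJ^{n+1}$-torsion objects, and the standard identification $\widehat{\cO}_{\cZ_n,\cX_0}\cong\widehat{\cO}_{\cX,\cX_0}/\cJ^{n+1}\widehat{\cO}_{\cX,\cX_0}$ matches these with $\Coh(\cZ_n)$ and $\Coh(\widehat{(\cZ_n)}_{\cX_0})$ respectively. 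By construction, the images of $\cF|_{\cZ_n}$ and $\mathfrak{F}_n$ in $\Coh(\widehat{(\cZ_n)}_{\cX_0})$ coincide, so coherent completeness of $(\cZ_n,\cX_0)$ forces $\cF|_{\cZ_n}\cong\mathfrak{F}_n$.

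For formal functions, I would use the derived reformulation in \Cref{P:underived-derived-comparison}\eqref{PI:underived-derived-comparison:ff}: the hypothesis is equivalent to $\cF\xrightarrow{\sim}\holim{k}(\cF/\cI^{k+1}\cF)$ in $D(\cX)$ for every $\cF\in\Coh(\cX)$, and formal functions for $(\cX,\cZ)$ is equivalent to $\cF\xrightarrow{\sim}\holim{n}(\cF/\cJ^{n+1}\cF)$. Cofinality gives $\holim{n}(\cF/\cJ^{n+1}\cF)\simeq\holim{k}(\cF/\cJ^{m(k+1)}\cF)$, and the termwise short exact sequences
\[
0\to \cI^{k+1}\cF/\cJ^{m(k+1)}\cF\to\cF/\cJ^{m(k+1)}\cF\to\cF/\cI^{k+1}\cF\to 0
\]
assemble into an exact triangle of homotopy limits. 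The problem therefore reduces to showing $\holim{k}(\cI^{k+1}\cF/\cJ^{m(k+1)}\cF)\simeq 0$ in $D(\cX)$. This vanishing is the main obstacle, and I would attack it by an Artin--Rees/Mittag--Leffler analysis of the inverse system $\{\cI^{k+1}\cF/\cJ^{m(k+1)}\cF\}_k$ of coherent sheaves, modeled on the proof of \Cref{T:strong-cp-implies-ff}, using that each term is annihilated by $\cJ^{m(k+1)}$ and using the coherent completeness statement already established to control the pro-system at each nilpotent thickening $\cZ_n$.
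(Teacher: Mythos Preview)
The paper states this lemma as an ``easy result'' and provides no proof, so there is nothing to compare your approach against directly. Let me assess the correctness of your proposal.

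Your argument for coherent completeness is essentially sound: restricting a $\cZ$-adic system to the $\cX_0$-thickenings via the inclusions $\cX_k \hookrightarrow \cZ_{m(k+1)-1}$, algebraizing, and then checking the restriction to each $\cZ_n$ via coherent completeness of $(\cZ_n,\cX_0\cap\cZ_n)$ is a reasonable strategy. You should also address full faithfulness, but this follows once formal functions is established (via \Cref{C:fullyfaithful}) or by a similar restriction argument.

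Your argument for formal functions, however, has two genuine gaps. First, your reformulation is not what \Cref{P:underived-derived-comparison}\eqref{PI:underived-derived-comparison:ff} gives: derived formal functions says $\cF \to c_{\qcoh,*}c^*\cF$ is an isomorphism in $D_{\qcoh}(\cX)$, which is \emph{not} the same as $\cF \simeq \holim{k}(\cF/\cI^{k+1}\cF)$ in $D(\cX)$, since the quasi-coherator intervenes and $\holim{k}\cF_k$ need not lie in $D_{\qcoh}(\cX)$. Second, and more seriously, you reduce to the vanishing $\holim{k}(\cI^{k+1}\cF/\cJ^{m(k+1)}\cF)\simeq 0$ but do not prove it; your proposed attack invokes ``the coherent completeness statement already established,'' but the lemma treats formal functions and coherent completeness as separate ``(resp.)'' assertions, so when proving the formal functions statement you may only assume formal functions for $(\cX,\cX_0)$, not coherent completeness. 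Without that, your Mittag--Leffler sketch has no input to feed on: the terms $\cI^{k+1}\cF/\cJ^{m(k+1)}\cF$ need not form a pro-zero or Mittag--Leffler system in general (the $\cI$-adic and $\cJ$-adic filtrations are \emph{not} cofinal---you only have $\cJ^{mk}\subseteq\cI^k$, not the reverse).

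A cleaner route for formal functions is to use the factorization $\mathrm{H}^i(\cX,\cF)\xrightarrow{\alpha}\mathrm{H}^i(\hat{\cX}_{\cZ},\hat{\cF}_{\cZ})\xrightarrow{\beta}\mathrm{H}^i(\hat{\cX}_{\cX_0},\hat{\cF}_{\cX_0})$ coming from the morphism of ringed sites $\hat{\cX}_{\cX_0}\to\hat{\cX}_{\cZ}$; since $\beta\alpha$ is an isomorphism, it suffices to show $\beta$ is injective, and this is what you should focus on.
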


We collect here some examples of algebraic stacks $\cX$ that satisfy each of the three properties: cohomological properness, formal functions and coherent completeness.

\begin{example} [Noetherian affine schemes]
  Let $A$ be a noetherian ring and let $I \subseteq A$ be an
  ideal. Then 
  $(\Spec A,\Spec A/I)$ is coherently complete if
  and only if $A$ is $I$-adically complete \cite[Ex. 3.9]{ref:ahr2}. 
\end{example}

\begin{example}[Proper algebraic stacks] \label{ex:proper} A proper
  morphism $\cX \to \cY$ of noetherian algebraic stacks is universally
  cohomologically proper \cite[Thm.~1.2]{ref:olsson}. In the case that
  $\cY = \Spec R$ and $I \subset R$ is an ideal such that $R$ is
  $I$-adically complete, then setting $\cX_0 \subset \cX$ to be the
  preimage of $\Spec R/I \subset \Spec R$, the pair $(\cX, \cX_0)$
  satisfies formal functions and is coherently complete
  \cite{ref:olsson,ref:conrad}. 
\end{example} 

\begin{example}[Good moduli spaces] \label{ex:good} Let
  $\cX \to \spec R$ be a good moduli space \cite{ref:alper}, where
  $\cX$ is a noetherian algebraic stack with affine diagonal. Then
  $\cX \to \spec R$ is of finite type \cite[Thm.~A.1]{ref:ahr}, and
  $R$ is noetherian and $\cX$ is cohomologically proper over $R$
  \cite[Thm.~4.16 (x)]{ref:alper}. Since good moduli spaces are
  compatible with arbitrary base change, $\cX$ is even universally
  cohomologically proper over
  $R$. 
  
  Let $\cX_0 \subset \cX$ be a closed substack defined by a coherent
  sheaf of ideals $\mathcal{I} \subset \cO_\cX$ and let
  $I = \Gamma(\cX, \mathcal{I})$ to be the corresponding ideal of
  $R$. If $\cX_0$ has the resolution property and $R$ is $I$-adically
  complete, then the pair $(\cX, \cX_0)$ is coherently complete and
  satisfies formal functions \cite[Thm.~1.6]{ref:ahr2}.  When $\cX$
  has the resolution property and $\cX_0$ is the preimage of
  $\Spec R/I$, then this was the main result of \cite{ref:gerdzb}. If
  $R$ is equicharacteristic, $\cX$ has the resolution property and
  $\cX_0$ is supported at a closed point, then this is
  \cite[Thm.~1.3]{ref:ahr}.

  We also point out the following converse. Let $\cX$ be a noetherian
  algebraic stack with noetherian adequate moduli space
  $\pi \colon \cX \to X$. Let $\cX_0 \subseteq \cX$ be a closed
  substack with scheme-theoretic image $X_0 \subset X$.  If
  $X=\Spec A$ is affine and the pair $(\cX,\cX_0)$ is coherently
  complete, then the pair $(X,X_0)$ is coherently complete
  \cite[Lem.~3.5(1)]{ref:ahr2}.
\end{example}

\begin{example}[Adequate moduli spaces] 
  Let $\cX = [\Spec A/G]$, where $G$ is a reductive algebraic group
  over a noetherian ring $R$ acting on an affine scheme $\Spec A$ of
  finite type over $R$.  Then the cohomology ring
  $H^*(\cX, \oh_{\cX}) = H^*(G,A)$ is a finitely generated
  $R$-algebra.  See \cite[Thm.\ 1.1]{touze-vanderkallen} for the case
  when $R$ is a field and \cite{MR3525842} in general. It follows that
  $\cX \to \Spec (A^G)$ is cohomologically proper.  If $J \subseteq A$
  is a $G$-equivariant ideal $A^G$ is $J^G$-adically complete, then
  $(\cX,[\Spec(A/J)/G])$ satisfies formal functions
  (\Cref{cor:reductive-ff}).
\end{example}

\begin{example}[Unipotent groups] \label{ex:unipotent}
  If $U$ is a unipotent (affine) algebraic group over an algebraically closed field $k$, then $BU$ is cohomlogically proper if and only if ${\rm char}(k) = 0$.  Indeed, if ${\rm char}(k) = 0$, then $U$ admits a filtration by $\bG_a$'s and thus it suffices to show that $B \bG_a$ is cohomologically proper.  Any finite dimensional representation of $\bG_a$ has a filtration by trivial representations and so it is enough to compute the cohomology $\mathrm{H}^i(B \bG_a, k)$ of the trivial representation.  Using the \v{C}ech complex corresponding to the cover $\Spec k \to B \bG_a$, one can compute that $\mathrm{H}^i(B \bG_a, k) = k$ if $i=0,1$ and zero otherwise.   

  On the other hand, if ${\rm char}(k) = p$, then we can compute (again using the \v{C}ech complex) that $H^1(B \bG_a, k)$ is infinite dimensional.  Thus $B \bG_a$ is not cohomologically proper and it follows that neither is $BU$ for any unipotent group $U$.

  Supposing again that ${\rm char}(k)=0$, let $(R, \fm)$ be a complete local noetherian ring with residue field $k$ such that $R$ is not artinian.  
  Then $(BU_R, BU_{k})$ is not coherently complete.  It suffices to assume that $U = \bG_a$ and that $R = k[[\epsilon]]$.  Consider the exponential map $\bG_{a, R/\fm^n} \to \bG_{m, R/\fm^n}$ defined by $t \mapsto \exp(\epsilon x) = 1 + (\epsilon x) + \frac{1}{2} (\epsilon x)^2 + \cdots $, where $t$ and $x$ are coordinates of $\bG_m$ and $\bG_a$, respectively.  This defines a compatible system of non-trivial line bundles on $B \bG_{a, R/\fm^n}$ that does not algebraize to a line bundle on $B \bG_{a,R}$. Indeed, every line bundle on $B \bG_{a,R}$ is trivial. 
\end{example}

\begin{example}[Universal cohomological properness of $BG$ in characteristic
  0] \label{ex:cohomological-properness-BG-char0} If $G$ is an affine group scheme of finite
  type over an algebraically closed field $k$ of characteristic $0$,
  then $BG$ is cohomologically proper.  To see this, let $G_u$ be the
  unipotent radical of $G$ so that $G_r:=G/G_u$ is linearly reductive.
  Since $BG_r$ has vanishing higher coherent cohomology, it is
  cohomologically proper.  Therefore, it suffices to prove that
  $BG \to BG_r$ is cohomologically proper. By flat base
  change and descent, it remains to prove that the base change
  $BG_u \to \Spec k$ (of $BG \to BG_r$ by $\Spec k \to BG_r$) is
  cohomologically proper but this follows from \Cref{ex:unipotent}.

  More generally, $BG \to \Spec k$ is universally cohomologically
  proper. In particular, it follows that if $T$ is an $J$-adically
  complete noetherian $k$-algebra, then $(BG_T, BG_{T/J})$ satisfies
  formal functions (\Cref{cor:adic-cp-ff}). To see this, let $R$ be a
  noetherian $k$-algebra.  We first observe that $BG_R$ has finite
  cohomological dimension.  Since $G$ is affine, we may choose an
  embedding $G \subset \GL_n$.  This induces a morphism
  $BG_R \to B \GL_{n,R}$ of algebraic stacks such that the base change
  by $\Spec R \to B \GL_{n,R}$ is the quotient $\GL_{n,R} / G_R$,
  which we know has finite cohomological dimension.  Let $d$ be the
  cohomological dimension.  If $M$ is a coherent $BG_R$-module, then
  there is a $BG_k$-module $N$ and a surjection
  $N_R=N \tensor_k R \to M$.
  Indeed, the morphism $r\colon BG_R \to BG_k$ is affine, so the adjunction $r^*r_*M \to M$ is surjective. We may write $r_*M = \varinjlim_\lambda N_\lambda$, where each $M_\lambda$ is a vector bundles on $BG_k$. Since $M$ is coherent on $BG_R$, there is a $\lambda_0$ sufficiently large such that $r^*r_*N_\lambda \twoheadrightarrow M$ for all $\lambda\geq \lambda_0$. Now take $N=N_{\lambda_0}$.
  Letting $L$ be its kernel, there is a long exact sequence
  \[
    \cdots \to \mathrm{H}^d(BG_R,L) \to \mathrm{H}^d(BG_R, N_R) \to \mathrm{H}^d(BG_R,M) \to 0
  \]
  By flat base change, we know that
  $\mathrm{H}^d(BG_R,N_R) = \mathrm{H}^d(BG,N) \tensor_k R$, which is a finite
  $R$-module, so $\mathrm{H}^d(BG_R,M)$ is a finite $R$-module.  Since this
  holds for any coherent $BG_R$-module $M$, we may perform descending
  induction to conclude that $\mathrm{H}^i(BG_R,M)$ is a finite
  $R$-module for all $i$.
\end{example}

\section{Descent of properties}

The main result of this section is a criterion for descending cohomological properness, formal functions and coherent completeness along universally submersive morphisms.  Recall that a finite type morphism $\cX \to \cY$ of noetherian algebraic stacks is \emph{universally submersive} if for every morphism $V \to \cY$ from a scheme, the base change $\cX \times_{\cY} V \to V$ is surjective and $V$ has the quotient topology.  This is equivalent to requiring that for every morphism $\Spec R \to \cY$ from a DVR $R$ there is an extension of DVRs $R \to R'$ and a lift $\Spec R' \to \cX$ of the composition $\Spec R' \to \Spec R \to \cX$.

The most important examples for us are when $\cX \to \cY$ is representable, proper, and surjective, or when $\cX \to \cY$ is representable, faithfully flat, and locally of finite presentation.  For proper morphisms, these descent results are rather standard. The novelty here is that they hold more generally as long as you require the corresponding property on not only $\cX$ but the higher fiber products
\[ (\cX/\cY)^n \coloneqq \underbrace{\cX \times_{\cY} \cdots \times_{\cY} \cX}_{n} \]
for all $n \ge 1$.

\begin{theorem} \label{thm:descent} Let $f \co \cX \to \cY$ be a
  representable, universally submersive and finite type morphism of
  noetherian algebraic
  stacks.  
  \begin{enumerate}[{\rm(1)}]
    \item \label{thm:descent1} 
      Suppose that $\cY$ is of finite type over a noetherian ring $R$.  If $(\cX/\cY)^n$ is cohomologically proper over $\Spec R$ for $n \ge 1$, then so is $\cY$.
    \item \label{thm:descent2}
      Let $\cY_0 \subset \cY$ be a closed substack and $\cX_0 \subset \cX$ be its preimage.  If $((\cX/\cY)^n, (\cX_0/\cY_0)^n)$ satisfies formal functions for $n \ge 1$, then so does $(\cY, \cY_0)$.
    \item \label{thm:descent3} 
      Let $\cY_0 \subset \cY$ be a closed substack defined by a coherent sheaf of ideals $\cI \subset \oh_{\cY}$ and let $\cX_0 \subset \cX$ be its preimage.
If $(\cX, \cX_0)$ is coherently complete and $((\cX/\cY)^n, (\cX_0/\cY_0)^n)$ satisfies formal functions for $n \ge 1$, then $(\cY, \cY_0)$ is coherently complete and satisfies formal functions.
  \end{enumerate}
  If $f \co \cX \to \cY$ is proper and surjective, then in each case one only needs to require the properties for $n=1$.
\end{theorem}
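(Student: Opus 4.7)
The strategy is to prove all three parts via cohomological descent along the augmented simplicial algebraic stack $\cX_\bullet \to \cY$, where $\cX_n \coloneqq (\cX/\cY)^{n+1}$ and $f_n\colon \cX_n \to \cY$ is the structure map. The central input is that a representable, universally submersive, finite type morphism of noetherian algebraic stacks is of universal cohomological descent for quasi-coherent sheaves: for $\cF \in D^+_{\qcoh}(\cY)$, the augmentation
\[
  \cF \to \RDERF\lim_{[n]\in\Delta} \RDERF (f_n)_{*} f_n^{*}\cF
\]
is an isomorphism. This follows from Rydh's $h$-descent machinery, which handles representable universally submersive morphisms by refining them locally on the target by morphisms that are either proper surjective or faithfully flat. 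When $f$ is itself proper and surjective, a $2$-truncated \v{C}ech diagram suffices and only $\cX$ and $\cX\times_\cY\cX$ enter, which will explain why the final clause needs only $n=1$.

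For part \eqref{thm:descent1}, take $\cF \in \Coh(\cY)$ and apply $\RDERF\Gamma(\cY,-)$ to the descent equivalence; this produces a spectral sequence with $E_1^{p,q} = \mathrm{H}^q(\cX_p, f_p^{*}\cF)$ converging to $\mathrm{H}^{p+q}(\cY,\cF)$. Each $E_1^{p,q}$ is a finitely generated $R$-module by hypothesis, and only finitely many $(p,q)$ with fixed $p+q$ contribute (the simplicial object truncates in the proper surjective case; in the flat case the \v{C}ech spectral sequence lies in a half-plane), so the abutment is finitely generated. For part \eqref{thm:descent2}, the same descent equivalence applies after base change along the flat morphism $c_{\cY}\colon \widehat{\cY} \to \cY$: the completion of $(\cX/\cY)^n$ along the preimage of $\cY_0$ is identified with the corresponding formal fiber product, so the descent spectral sequence computes $\mathrm{H}^i(\widehat{\cY},\widehat{\cF})$ in exactly the same way. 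The hypothesis of formal functions on each $(\cX/\cY)^n$ gives term-by-term isomorphisms on $E_1$, hence the comparison $\mathrm{H}^i(\cY,\cF) \iso \mathrm{H}^i(\widehat{\cY},\widehat{\cF})$.

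For part \eqref{thm:descent3}, part \eqref{thm:descent2} already yields formal functions for $(\cY,\cY_0)$, so by \Cref{C:fullyfaithful} it suffices to establish essential surjectivity of $\Coh(\cY) \to \Coh(\widehat{\cY})$. Given $\mathfrak{F} \in \Coh(\widehat{\cY})$, pull back to $\widehat{f}^{*}\mathfrak{F} \in \Coh(\widehat{\cX})$ and algebraize via coherent completeness of $(\cX,\cX_0)$ to a unique $\cG \in \Coh(\cX)$. The two pullbacks of $\cG$ to $\cX\times_\cY\cX$ agree after completion (both equal the pullback of $\mathfrak{F}$), so by the full faithfulness coming from formal functions on $(\cX/\cY)^2$ they agree algebraically; the cocycle condition on $(\cX/\cY)^3$ is verified identically. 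Effective descent for coherent sheaves along representable universally submersive morphisms then produces $\cF \in \Coh(\cY)$ with $\widehat{\cF} \simeq \mathfrak{F}$. The main obstacle is packaging universal cohomological descent and effective $\Coh$-descent along arbitrary representable universally submersive morphisms: the proper surjective and faithfully flat cases are individually classical, but their common treatment (in particular, the existence of suitable $h$-refinements in the stacky setting) is where all the technical work lives; once this input is in hand, the three parts follow essentially formally from the descent spectral sequence and \Cref{C:fullyfaithful}.
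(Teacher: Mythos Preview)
Your plan diverges substantially from the paper's. The paper never invokes cohomological or effective descent along $f$ itself. Instead it runs a d\'evissage (noetherian induction on $\Coh(\cY)$, resp.\ $\Coh(\widehat{\cY})$), applies Raynaud--Gruson/Rydh flatification to produce a blow-up $g\colon \cY' \to \cY$ and a genuinely faithfully flat $f'\colon \cX' \to \cY'$, uses fppf descent on $\cY'$, and then compares $\cY'$ with $\cY$ via the triangle $\cF \to \RDERF g_*g^*\cF \to \cC$ (resp.\ the adjunction $\mathfrak{F} \to \widehat{g}_*\widehat{g}^*\mathfrak{F}$), where the error term is supported on a proper closed substack and is killed by the induction hypothesis.

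The gap in your proposal is the sentence ``Effective descent for coherent sheaves along representable universally submersive morphisms then produces $\cF \in \Coh(\cY)$.'' This is not a result you can cite: universally submersive morphisms are not in general morphisms of effective descent for the abelian category $\Coh$ (finite surjective non-flat maps already illustrate that the descent category need not be equivalent to $\Coh(\cY)$). The h-descent theorems that do exist---Rydh's for \'etale morphisms, or \cite[Thm.~3.3.1]{ref:hlp} for $\APerf$ in the derived setting---do not yield classical effective descent for coherent sheaves. You acknowledge this is ``where all the technical work lives,'' but then proceed as if it were available; that is the missing idea. The paper's flatification-plus-d\'evissage argument is not a repackaging of h-descent but a genuine workaround: it never descends along $f$, only along the flat $f'$, and handles the difference between $\cY$ and $\cY'$ by induction. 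The same remark applies, less fatally, to your cohomological descent input for parts \eqref{thm:descent1}--\eqref{thm:descent2}: it is plausible but not something one can simply cite for stacks, and the paper's argument supplies it as a byproduct of the same d\'evissage.

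A minor correction: in the proper surjective case the simplicial object does not ``truncate.'' What happens is that the hypotheses for $n=1$ propagate to all $n$ because $(\cX/\cY)^n \to \cX$ is proper, so cohomological properness and formal functions ascend via \Cref{E:adic-strong-cp} and \Cref{T:strong-cp-comparison}; the paper's proof then runs unchanged.
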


\begin{remark} \label{rmk:formal-properness-descends}
  This theorem is similar in spirit to the statement \cite[Thm.~3.3.1]{ref:hlp} that an h-covering $\cX' \to \cX$ induces an equivalence of $\infty$-categories of almost perfect complexes on $\cX$ and on the \v{C}ech nerve of $\cX' \to \cX$ and its consequence \cite[Thm.~4.3.1]{ref:hlp} that formal properness descends under proper surjective morphisms.
\end{remark}

\begin{proof}
  We will argue \eqref{thm:descent1} and \eqref{thm:descent2} by noetherian induction 
on the abelian category $\Coh(\cY)$ and we may assume that every proper closed substack $\cZ \subsetneq \cY$ is cohomologically proper over $R$ and that the pair $(\cZ, \cZ \cap \cX_0)$ satisfies formal functions.  For \eqref{thm:descent3}, note that $(\cY, \cY_0)$ satisfies formal functions by \eqref{thm:descent2} and the functor $\Coh(\cY) \to \Coh(\widehat{\cY})$
  is fully faithful, with image stable under kernels, cokernels and extensions, by \Cref{C:fullyfaithful}. Thus it suffices to show that $\Coh(\cY) \to \Coh(\widehat{\cY})$ is essentially surjective.  To accomplish this, we will also argue via d\'evissage on the abelian category $\Coh(\widehat{\cY})$ and we may assume that if $\fG$ is a coherent sheaf on $\hat{\cY}$ annihilated by $\cJ \cO_{\widehat{\cY}}$ for some non-zero coherent sheaf of ideals $\cJ \subset \cO_\cY$, then $\fG$ is in the essential image. 
  
  In each case, we may further reduce to the situation where $\cY$ is reduced. By generic flatness, there is a dense open  substack $\cU \subset \cY$ such that $f^{-1}(\cU) \to \cU$ is flat. By Rydh's  extension \cite{ref:rydh} of Raynaud-Gruson's theorem \cite{ref:rayanud-gruson} on flatification by blow-ups, there is a commutative diagram:
 \[ \begin{tikzcd}
  \cX' \ar{r}{g'}\ar{d}{f'}  & \cX \ar{d}{f}  \\
  \cY' \ar{r}{g} & \cY,
\end{tikzcd} \]
where $g \co \cY' \to \cY$ is a blow-up along a closed substack contained in the complement of $\cU$, $\cX'$ is the strict transform of $\cX$ along $g \co \cY' \to \cY$, and  $f' \co \cX' \to \cY'$ is flat.  Observe that $g \co \cY' \to \cY$ is a surjective, proper and representable morphism.  Moreover, since $f \co \cX \to \cY$ is universally submersive, 
$f' \co \cX' \to \cY'$ is surjective.  Indeed, for every point $y' \in |\cY'|$ there is a morphism $\Spec R \to \cY'$ from a DVR whose generic point maps to $g^{-1}(\cU)$.  As $\cX \to \cY$ is universally submersive, there exists an extension of DVRs $R \to R'$ and a lift $\Spec R' \to \cX$ of $\Spec R' \to \Spec R \to \cY' \to \cY$.  The induced map $\Spec R' \to \cX \times_{\cY} \cY'$ factors through the strict transform $\cX'$.  The image of the closed point under $\Spec R' \to \cX'$ is a preimage of $y'$ and we conclude that  $f' \co \cX' \to \cY'$ is a faithfully flat morphism of finite type.

 Since $g \co \cY' \to \cY$ is separated and representable, its diagonal is a closed immersion and it follows that $(\cX'/\cY')^n \to (\cX'/\cY)^n$ is also closed immersion for each $n \geq 1$. Since $g' \co \cX' \to \cX$ is proper, $(\cX'/\cY)^n \to (\cX/\cY)^n$ is proper and it follows that $(\cX'/\cY')^n \to (\cX/\cY)^n$ is proper and thus universally cohomologically proper (\Cref{ex:proper}).

 We now establish \eqref{thm:descent1}. 
 The hypotheses that each $(\cX/\cY)^n$ is cohomologically proper over $R$ implies that $(\cX'/\cY')^n$ is also cohomologically proper over $R$.
Since $f' : \cX' \to \cY'$ is faithfully flat, cohomological descent implies that for a coherent sheaf $\cM$ on $\cY'$, there is a convergent spectral sequence:
\[ 
  \mathrm{H}^i((\cX'/\cY')^j, \cM_{(\cX'/\cY')^j}) 
  \Rightarrow \mathrm{H}^{i+j}(\cY', \cM)
\]
where $\cM_{(\cX'/\cY')^j}$ denotes the pullback of $\cM$ under the projection $(\cX'/\cY')^j \to \cY'$. 
It follows immediately that $\cY'$ is cohomologically proper over $R$.

For a coherent sheaf $\cF$ on $\cY$, consider the exact triangle of complexes on $\cY$:
\[
  \cF \to \RDERF g_\ast g^\ast \cF \to \cC \to \cF[1]. 
\]
Since $g$ is proper and representable, $\mathrm{R}g_\ast g^\ast \cF$ and also $\cC$ have bounded coherent cohomology.  Since $\cY'$ is cohomologically proper over $R$, we conclude (using \Cref{rmk:cp-equivalence}) that 
\[ 
  \rgamma(\cY,  \RDERF g_\ast g^\ast \cF ) 
  \simeq \rgamma(\cY', g^\ast \cF) \in D^+_{\coh}(R).
\]
Since $g \co \cY' \to \cY$ is an isomorphism over $\cU \subset \cY$, $\cC|_U \simeq 0$.  Therefore in order to show that $\rgamma(\cY, \cF) \in  D^+_{\coh}(R)$, it suffices to show that $\rgamma(\cY, \cC) \in D^+_{\coh}(R)$.  By using the spectral sequence $\RDERF^i \pi_\ast \mathscr{H}^j(\cC) \Rightarrow \mathscr{H}^{i+j}(\RDERF\pi_\ast \cC)$ as in \Cref{rmk:cp-equivalence}, it suffices to show that if $\cE$ is a coherent sheaf on $\cY$ such that $\cE|_U = 0$, then $\rgamma(\cY, \cE) \in D^+_{\coh}(R)$.  But in this case $\cE$ is supported on a proper closed substack $\cZ \subsetneq \cX$ and the statement follows from the d\'evissage hypothesis.  

For \eqref{thm:descent2} and \eqref{thm:descent3}, we also establish
the notation that the closed substacks $\cY'_0 \subset \cY'$ and
$\cX'_0 \subset \cX'$ denote the preimages of $\cY_0$.  For
\eqref{thm:descent2}, since $(\cX'/\cY')^n \to (\cX/\cY)^n$ is
universally cohomologically proper and
$((\cX/\cY)^n, (\cX_0/\cY_0)^n)$ satisfies formal functions for each
$n \ge 1$, it follows from \Cref{L:comparison-relative-formal},
\Cref{T:strong-cp-comparison}\eqref{TI:comparison:ff} and
\Cref{E:adic-strong-cp} that $((\cX'/\cY')^n, (\cX'_0/\cY'_0)^n)$ also
satisfies formal functions.

Since $f'\co \cX' \to \cY'$ is faithfully flat, cohomological descent implies that for each coherent sheaf $\cM'$ on $\cY'$, we have a morphism of convergent spectral sequences:
\[ 
  \begin{tikzcd} 
    \mathrm{H}^i((\cX'/\cY')^j, \cM'_{(\cX'/\cY')^j}) \ar[Rightarrow]{r} \ar{d}
      & \mathrm{H}^{i+j}(\cY', \cM') \ar{d} \\
    \mathrm{H}^i((\cX'/\cY')^{j \wedge{\hspace{1mm}}}, \widehat{\cM'}_{(\cX'/\cY')^{j \wedge{\hspace{1mm}}}}) \ar[Rightarrow]{r} 
      &  \mathrm{H}^{i+j}(\widehat{\cY'}, \widehat{\cM'}), 
  \end{tikzcd} 
\]
where $(\cX'/\cY')^{j \wedge{\hspace{1mm}}}$ is the completion of
$(\cX'/\cY')^j$ along $(\cX'_0/\cY'_0)^j$ and $\hat{\cY'}$ is the
completion of $\cY'$ along $\cY'_0$.  Since
$((\cX'/\cY')^n, (\cX'_0/\cY'_0)^n)$ satisfies formal functions, so
does $(\cY', \cY'_0)$.

For a coherent sheaf $\cF$ on $\cY$, we again consider the exact triangle:
\[
  \cF \to \RDERF g_\ast g^\ast \cF \to \cC \to \cF[1],
\]
which induces a morphism of exact triangles 
\[ \begin{tikzcd} 
  \rgamma(\cY,\cF) \ar{r} \ar{d} 
    & \rgamma(\cY, \RDERF g_\ast g^\ast \cF )\ar{r} \ar{d} 
    &  \rgamma(\cY, \cC)  \ar{r} \ar{d}
    &  \rgamma(\cY, \cF)[1] \ar{d} \\
 \rgamma(\widehat{\cY}, \widehat{\cF}) \ar{r} 
  & \rgamma(\widehat{\cY}, (\RDERF g_\ast g^\ast \cF)^{\wedge{\hspace{1mm}}} )\ar{r} 
  &  \rgamma(\widehat{\cY}, \widehat{\cC}) \ar{r} 
  &  \rgamma(\widehat{\cY}, \widehat{\cF})[1].
 \end{tikzcd} \]

Since $(\cY', \cY'_0)$ satisfies formal functions, the second vertical map is an isomorphism.   Since $\cC|_U \simeq 0$, the third vertical map is also a isomorphism.  Indeed, we can reduce as above to the case that $\cC$ is a coherent sheaf supported in cohomological degree 0 in which case $\cC$ is supported on a proper closed substack $\cZ \subsetneq \cY$ and we may apply the d\'evissage hypothesis. Thus $\rgamma(\cY, \cF) \to \rgamma(\hat{\cY}, \hat{\cF})$ is a an isomorphism and $(\cY, \cY_0)$ satisfies formal functions.

For \eqref{thm:descent3}, let $\fG$ be a coherent sheaf on $\hat{\cY}$.  By coherent completeness of $(\cX, \cX_0)$, there is a coherent sheaf $\cF$ on $\cX$ and an isomorphism $\hat{\cF} \cong \hat{f}^* \fG$.  Denoting by $\cF'$ the pullback $g'^* \cF$, we have that completion $\hat{\cF'}$ of $\cF'$ along $\cX'_0$ is identified with $\hat{g'}^* \hat{f}^* \fG \cong \hat{f'}^* \hat{g}^{\, *} \fG$.  Letting $p_1$ and $p_2$ denote the two projections $\cX' \times_{\cY'} \cX'$, then because $\hat{\cF'}$ is the pullback of a coherent sheaf on $\hat{\cY'}$ we have an isomorphism
\[
  \vartheta : \hat{ p^*_1 \cF'} 
  \iso \hat{ p^*_2 \cF'}
\]
of coherent sheaves on the completion of $(\cX'/\cY')^2$ along the preimage of $\cY_0$
satisfying the cocycle condition on the completion of $(\cX'/\cY')^3$ along the preimage of $\cY_0$. 
As the morphisms $(\cX'/\cY')^n \to (\cX/\cY)^n$ are universally cohomologically proper for $n \ge 1$,  $((\cX'/\cY')^n, (\cX'_0/\cY'_0)^n)$ satisfy formal functions.
It  follows that there is a unique isomorphism $\theta \co p_1^* \cF' \iso p_2^* \cF'$ of coherent sheaves on $(\cX'/\cY')^2$ satisfying the cocycle condition on $(\cX'/\cY')^3$ such that $\hat{\theta} = \vartheta$.
By faithfully flat descent, there is a coherent sheaf $\cG'$ on $\cY'$ such that there is an isomorphism $f'^* \cG' \cong \cF'$ compatible with $\theta$. Since the coherent sheaves $\hat{\cG'}$ and $\hat{g}^* \fG$ are described via the same formal descent data with respect to $\hat{\cX'} \to \hat{\cY'}$, there is an isomorphism $\hat{ \cG'} \cong \hat{g}^* \fG$.  \Cref{T:strong-cp-comparison}\eqref{TI:comparison:compare} now gives equivalences
$\hat{g}_* \hat{g}^* \fG \cong \hat{g}_* \hat{\cG'} \cong \hat{g_* \cG'}$ 
and thus $\hat{g}_* \hat{g}^* \fG$ is in the essential image of $\Coh(\cY) \to \Coh(\hat{\cY})$.
As $g \co \cY' \to \cY$ is an isomorphism over $\cU$, the adjunction morphism
$$\fG \to \hat{g}_* \hat{g}^* \fG$$
is an isomorphism over $\cU$.  We claim that the kernel and cokernel of this adjunction morphism are annihilated by some power of sheaf ideals $\cJ \subset \oh_{\hat{\cY}}$ defining the reduced complement of $\cU$.  Indeed, this can be checked on a smooth presentation $\Spec B \to \cY$.  Let $I \subset B$ be the ideal defined by the preimage of $\cY_0$ in $\Spec B$,  let $\hat{B}$ be the $I$-adic completion of $B$, and let $\cY'_B = \cY' \times_{\cY} \Spec B$ be endowed with the (proper) projection morphism $g_B \co \cY'_B \to \Spec B$.  The module of sections of $\fG$ over $\Spec B \to \cY$ is a finite $\hat{B}$-module $N$ while the module of sections of $\hat{g}_* \hat{g}^* \fG$ over $\Spec B \to \cY$ is $\mathrm{H}^0(\hat{\cY'}_B, \hat{g_B}^* N)$, where $\hat{g_B}$ is the completion of $g$ along the preimage of $\cY_0$.  Moreover, the adjunction morphism $\fG \to \hat{g}_* \hat{g}^* \fG$ corresponds to the morphism 
$$N \to \mathrm{H}^0(\hat{\cY'}_B, \hat{g_B}^* N) \simeq \mathrm{H}^0(\cY'_{\hat{B}},g_{\hat{B}}^*N)$$
and both the kernel and cokernel are annihilated by some power of the ideal of sections of $\cJ$ over $\Spec B \to \cY$ because $\cY'_{\hat{B}} \to \Spec \hat{B}$ is an isomorphism over the open subset $\Spec \hat{B} - \Spec (\hat{B}/\mathscr{J}(\Spec B)\hat{B})$.  
We know that the essential image of $\Coh(\cY) \to \Coh(\hat{\cY})$ is stable under kernels, cokernels and extensions.
Since $\hat{g}_* \hat{g}^* \fG$ is in the essential image  and both the kernel and cokernel of $\fG \to \hat{g}_* \hat{g}^* \fG$ are in the essential image by the d\'evissage hypothesis, we conclude that $\fG$ is in the essential image.
\end{proof}

\begin{corollary} \label{cor:descent}
  Let $f \co \cX \to \cY$ be a universally submersive and representable morphism of noetherian algebraic stacks of finite type over a noetherian $I$-adically complete ring $R$.  Let $\cY_0 \subset \cY$ be a closed substack and $\cX_0 \subset \cX$ be its preimage.  Assume that $\Gamma(\cY, \oh_{\cY}) = R$ and that $\Gamma(\cY_0, \oh_{\cY_0}) = R/I$. Suppose that for each $n \ge 1$, $(\cX/\cY)^n$ is cohomologically proper over $\Spec R$ and that there is a closed substack $\cZ_n \subset (\cX/\cY)^n$ such that $|\cZ_n| \subset |(\cX_0/\cY_0)^n)|$ and such that the pair $((\cX/\cY)^n, \cZ_n)$ is coherently complete and satisfies formal functions.  Then $\cY$ is cohomologically proper over $\Spec R$ and the pair $(\cY, \cY_0)$ is coherently complete and satisfies formal functions.

  If $f \co \cX \to \cY$ is surjective, proper, and representable, then in each case one only needs to require the properties for $n=1$. 
\end{corollary}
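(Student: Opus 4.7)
The plan is to apply \Cref{thm:descent} directly to $f \co \cX \to \cY$, which is universally submersive, representable, and of finite type by hypothesis. The only mismatch between the given data and the hypotheses of \Cref{thm:descent} is that we are told the pairs $((\cX/\cY)^n, \cZ_n)$ (rather than $((\cX/\cY)^n, (\cX_0/\cY_0)^n)$) satisfy formal functions and coherent completeness. Since $|\cZ_n| \subseteq |(\cX_0/\cY_0)^n|$ by assumption, \Cref{lem:permanence-under-larger-substacks} immediately promotes both properties from the smaller substack $\cZ_n$ to the larger substack $(\cX_0/\cY_0)^n$ for every $n \ge 1$. In particular, the $n=1$ case gives that $(\cX, \cX_0)$ is itself coherently complete and satisfies formal functions.

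With this bookkeeping done, all the hypotheses of \Cref{thm:descent} are in force: each $(\cX/\cY)^n$ is cohomologically proper over $\Spec R$ (given), and each $((\cX/\cY)^n, (\cX_0/\cY_0)^n)$ satisfies formal functions (just established), together with the coherent completeness of $(\cX, \cX_0)$. Part (1) of \Cref{thm:descent} then yields that $\cY$ is cohomologically proper over $\Spec R$, part (2) yields that $(\cY, \cY_0)$ satisfies formal functions, and part (3) yields that $(\cY, \cY_0)$ is coherently complete. The final supplementary assertion---that when $f$ is proper and surjective it suffices to check the hypotheses only at $n=1$---is inherited verbatim from the corresponding supplement in \Cref{thm:descent}.

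I do not anticipate any substantive obstacle here: the corollary is a convenient repackaging of \Cref{thm:descent} whose sole new ingredient is the mild slack in specifying the closed substack, handled by \Cref{lem:permanence-under-larger-substacks}. The global section hypotheses $\Gamma(\cY, \oh_{\cY}) = R$ and $\Gamma(\cY_0, \oh_{\cY_0}) = R/I$, together with the $I$-adic completeness of $R$, serve to ensure that the completion of $\cY$ along $\cY_0$ is compatible with the $I$-adic completeness of the base, making the conclusion a useful input for subsequent applications, but they play no role in the descent argument itself.
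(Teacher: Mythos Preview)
Your proposal is correct and matches the paper's proof exactly: apply \Cref{lem:permanence-under-larger-substacks} to upgrade from $\cZ_n$ to $(\cX_0/\cY_0)^n$, then invoke the three parts of \Cref{thm:descent}. Your closing observation that the global-section hypotheses play no role in the descent itself is also accurate; the paper's one-line proof does not invoke them either.
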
 

\begin{proof}
  The corollary follows directly from \Cref{thm:descent} using \Cref{lem:permanence-under-larger-substacks} to deduce that the pair $((\cX/\cY)^n, (\cX_0/\cY_0)^n)$ is coherently complete and satisfies formal functions.
\end{proof}

\section{$\bG_m$-actions and destabilizing one-parameter subgroups}

After reviewing properties of $\bG_m$-actions and one parameter subgroups, we establish a refinement of the stabilization theorem in GIT (sometimes called the Hilbert--Mumford criterion) establishing the existence of destabilizing one-parameter subgroups that are regular in the stabilizer of the limit (\Cref{prop:refined-destabilization}).

\subsection{The fixed and attractor
  subschemes} \label{sec:fixed-attractor} Let $G$ be a connected,
smooth, and affine group scheme of finite type over a noetherian ring
$R$.  Let $X = \Spec A$ be an affine scheme over $R$ with an action of
$G$.  For any one-parameter subgroup $\lambda \co \bG_m \to G$, we
define the subfunctors
\begin{eqnarray*} 
  X^0_\lambda 
    &\coloneqq
    & \underline{\Hom}_R^{\bG_m}(\spec R, X) \hspace{0.45cm} \text{(fixed locus)} \\
  X^+_\lambda 
    &\coloneqq
    & \underline{\Hom}_R^{\bG_m}(\bA^1, X) \hspace{1.05cm} \text{(attractor)} 
\end{eqnarray*}
of $X$ as introduced in \cite{drinfeld}. These functors are represented by closed subschemes of $X$. Indeed, the $\lambda$-action on $X$ induces a $\bZ$-grading $A = \bigoplus_n A_n$.  Let
$I^-_\lambda = \sum_{n < 0} A_n$ and $I^+_{\lambda} = \sum_{n > 0} A_n$ denote the ideals generated by homogeneous elements in strictly negative and strictly positive degree; then one can check that $X^0_\lambda$ and $X^+_\lambda$ are represented by the closed subschemes $\Spec A/(I^-_\lambda+I^+_\lambda)$ and  $\Spec A/I^-_{\lambda}$, respectively.
If $k$ is an algebraically closed field over $R$, then the $k$-points of $X^0_{\lambda}$ are the $\bG_m$-fixed points  the $k$-points of $X^+_\lambda$ can be described as
\[
  X^+_\lambda(k) = 
  \{x \in X(k) \, \, \vert \, \, \lim_{t \to 0}  \lambda(t) \cdot x \, \, \text{exists} \}.
\]
The inclusion $X^0_{\lambda} \into X^+_{\lambda}$ has a natural retraction given by 
$$\ev_0 \co X^+_{\lambda} \to X^0_{\lambda}, \qquad x \mapsto \lim_{t \to 0} \lambda(t) \cdot x.$$

\begin{remark} \label{R:reduced} 
  If $X = \Spec A$ is integral, then $X^0_{\lambda}$ and $X^+_{\lambda}$ may be non-reduced and reducible.
  For example, if $\bG_m$ acts on $\Spec k[x,y,z,w]/(xw-y^2z^2)$ with weights $(1,0,0,-1)$, then $X^0_{\lambda} = \Spec k[y,z]/(y^2z^2)$ and $X^+_{\lambda} = \Spec k[x,y,z]/(y^2z^2)$.  If $X$ is integral, it can also happen that $X^0_{\lambda}(k)$ is a finite set with more than one point; e.g., if $X = \Spec k[x,y,z]/(x(x-1)+yz)$ with weights $(0,1,-1)$, then $X^0_{\lambda} = \Spec k[x]/(x(x-1))$.
\end{remark}

\subsection{Centralizer, parabolic and unipotent subgroups} 
\label{subsec:centralizer-parabolic}
In this subsection, we recall the dynamic approach to algebraic groups
with respect to one-parameter subgroup as discussed in \cite[Chapter
2.1]{ref:cgp} (also see \cite[\S\S 4-5]{ref:conrad_red}).  Let $G$ be a
connected, smooth, and affine group scheme of finite type over a noetherian ring
$R$.  Let $G$ act on itself via conjugation. Then a one-parameter
subgroup $\lambda \co \bG_m \to G$ induces an action of $\bG_m$ on $G$
(again via conjugation). Consider the following subgroups of $G$:
\begin{align*}
  Z_{\lambda}
  &=\{g \in G \, \vert  \, \, \text{$\lambda(t)g\lambda(t)^{-1} = g$ for all $t \in \bG_m$}\}
  &&\text{(centralizer of $\lambda$)}, \\
  P_{\lambda}
  &= \{g \in G \, \vert \, \lim_{t \to 0} \lambda(t) g \lambda(t)^{-1} \, \, \text{exists} \}
  &&\text{(parabolic of $\lambda$), } \\
  U_{\lambda}
  &= \{g \in G \, \vert \, \lim_{t \to 0} \lambda(t) g \lambda(t)^{-1} = 1 \}
  &&\text{(unipotent of $\lambda$)}, \
\end{align*}
which extend naturally to subgroup functors of $G$. When it is necessary to record the group $G$, we write $Z^{G}_{\lambda}$, $P^{G}_{\lambda}$, and $U^{G}_{\lambda}$.  
These functors are represented by closed subgroup schemes of $G$.
Indeed, observe that with the notation introduced in \S \ref{sec:fixed-attractor}, we have that $Z_{\lambda} = G^0_{\lambda}$ and $P_{\lambda} = G^+_{\lambda}$, and that $U_{\lambda}$ is identified with the kernel of $P_{\lambda}= G^+_{\lambda} \to G^0_{\lambda} = Z_{\lambda}$ defined by $g \mapsto \lim_{t \to 0} \lambda(t) g \lambda(t)^{-1} $. See also \cite[Lems. 2.1.4 and 2.1.5]{ref:cgp}. 

There is a split exact sequence
\begin{equation} \label{E:UPZ} 
  1 \to U_{\lambda}\to P_{\lambda}\to Z_{\lambda}\to 1.
\end{equation}
Over a field, the groups $Z_{\lambda}$, $P_{\lambda}$ and
$U_{\lambda}$ are well-known to satisfy several nice properties. If
$G$ is smooth (resp. connected), so is $Z_{\lambda}, P_{\lambda}$ and
$U_{\lambda}$.  The group $U_{\lambda}$ is unipotent.  If $G$ is
connected and reductive then so is $Z_{\lambda}$
\cite[Thm.~C.2.1]{ref:conrad_alg_notes}. Moreover, in this case,
$U_{\lambda}$ is the unipotent radical of $P_{\lambda}$ and
$G/P_{\lambda}$ is projective \cite[Prop.~2.2.9]{ref:cgp}. In general,
if $G$ is a split reductive group scheme over a noetherian ring $R$
and $\lambda \co \bG_{m} \to G$ is a one-parameter subgroup, then
$Z_{\lambda}$, $P_{\lambda}$, and $U_{\lambda}$ are closed subgroup
schemes of $G$ smooth over $R$, $G / P_{\lambda}$ is projective over
$R$ and $Z_\lambda$ is reductive \cite[Thm.~4.1.7, Ex.~4.1.9,
Cor.~5.2.8]{ref:conrad_red}.

If $G$ acts on an affine scheme $X$ of finite type over $R$, there are
natural actions of $Z_{\lambda}$ and $P_{\lambda}$ on $X^0_\lambda$
and $X^+_\lambda$ respectively.  The evaluation map
$\ev_0 \co X^+_{\lambda} \to X^0_{\lambda}$ is equivariant with
respect to $P_{\lambda}\to Z_{\lambda}$, and induces a morphism
$\ev_0 \co [X^+_{\lambda}/P_{\lambda}] \to[X^0_{\lambda}/Z_{\lambda}]$
on quotient stacks.

\subsection{Positively graded
  actions} \label{subsec:postively-graded-sections} Here we consider a
distinguished class of actions that appear frequently in practice (see
\Cref{E:plus-is-semipositive}).
\begin{definition} 
  Recall that an action of $\bG_m$ on an affine scheme $\Spec A$ over
  a noetherian ring $R$ is given by a grading $A = \bigoplus_{d}
  A_d$. The action is \emph{positively graded}
  (resp. \emph{semipositively graded}) if $A_d = 0$ for $d < 0$ and
  $A_0$ is finite over $R$ (resp., if $A_d = 0$ for $d < 0$).  A
  representation $V = \bigoplus_d V_d$ of $\bG_m$ is \emph{positively
    graded} (resp., \emph{semipositively graded}) if $V_d = 0$ for
  $d \le 0$ (resp., if $V_d = 0$ for $d < 0$).

  For the action of an algebraic group $G$ on an affine scheme
  $\Spec A$ over $R$ and a one-parameter subgroup
  $\lambda \co \bG_m \to G$, we say that $\Spec A$ is
  \emph{$\lambda$-positive} (resp. \emph{$\lambda$-semipositive}) if
  the $\bG_m$-action on $\Spec A$ induced by $\lambda$ is positively
  graded (resp. semipositively graded).
\end{definition}

\begin{lemma} \label{lem:positively-graded-equivalence}
  Let $\bG_m$ act on an affine scheme $X=\Spec A$ of finite type
  over a field $k$.
  \begin{enumerate}[{\rm(1)}]
  \item The action is semipositively graded if and only if
    $X^+_{\id} = X$.
  \item The action is positively graded if and only if $X^{\bG_m}$ is finite over $\Spec k$.
  \end{enumerate}
\end{lemma}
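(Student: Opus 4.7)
The plan is to read both parts directly off from the explicit descriptions of the ideals defining $X^+_{\id}$ and $X^{\bG_m} = X^0_{\id}$ in terms of the $\bZ$-grading $A = \bigoplus_{d\in\bZ} A_d$ induced by the $\bG_m$-action, as recalled at the start of Section~6.1.

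For (1), $X^+_{\id}$ is by construction the closed subscheme $\Spec(A/I^-_{\id})$, where $I^-_{\id} = \sum_{d<0} A_d$. Hence $X^+_{\id} = X$ if and only if $I^-_{\id} = 0$, equivalently $A_d = 0$ for all $d<0$, which is exactly the definition of semipositively graded.

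For (2), since ``positively graded'' is by definition the conjunction of $A_d=0$ for $d<0$ and $A_0$ finite over $k$, part (1) reduces matters to the semipositively graded case. In that case $I^-_{\id}=0$, so
\[
  X^{\bG_m} \;=\; \Spec\bigl(A/(I^-_{\id}+I^+_{\id})\bigr) \;=\; \Spec(A/I^+_{\id}),
\]
and the degree-zero projection induces a ring isomorphism $A/I^+_{\id} \cong A_0$ (the ideal $I^+_{\id}$ is graded and contains every positive-degree homogeneous component). Consequently $X^{\bG_m} \cong \Spec A_0$, and this scheme is finite over $\Spec k$ if and only if $A_0$ is a finite $k$-module, which is the remaining condition in the definition of positively graded.

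The only subtle point I anticipate is the ``if'' direction of (2): in isolation, finiteness of $X^{\bG_m}$ need not force semipositivity (for example, $\bG_m$ acting on $\bA^2$ with weights $(1,-1)$ has $X^{\bG_m}$ equal to the origin, yet the action has a negative weight). Accordingly, I read (2) in the setting furnished by (1), so that the combined content of the lemma is that the action is positively graded if and only if both $X^+_{\id}=X$ and $X^{\bG_m}$ is finite over $\Spec k$; with that hypothesis in place the proof reduces to the identification of $X^{\bG_m}$ with $\Spec A_0$ given above.
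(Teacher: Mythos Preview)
Your proof is correct and follows essentially the same approach as the paper, which simply notes that part (1) is clear from the definitions and for part (2) observes that $A_d=0$ for $d<0$ gives $X^{\bG_m}=\Spec(A/I^+)=\Spec A_0$. Your observation about the ``if'' direction of (2) is astute: the paper's own proof likewise only treats (2) under the standing assumption $A_d=0$ for $d<0$, confirming that the intended reading is the one you adopt (and this is how the lemma is applied in the subsequent example about $X^+_\lambda$).
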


\begin{proof}
  The first part is clear from the definitions.  For the second, observe that the condition that $A_d = 0$ for $d < 0$ implies that $X^{\bG_m} = \Spec (A/I^+) = \Spec A_0$. \end{proof}
  
The following is our key example.
\begin{example}\label{E:plus-is-semipositive}
  If $G$ is an algebraic group acting on an affine scheme $X = \Spec A$ of finite type over $k$ and $\lambda \co \bG_m \to G$ is a one-parameter subgroup, then $X^+_{\lambda}$ is always $\lambda$-semipositive.  Moreover, $X^+_{\lambda}$ is $\lambda$-positive if and only if $X^0_{\lambda}$ is finite over $\Spec k$.
\end{example}

\begin{lemma} \label{lem:positivity-under-products} Let $G$ be an
  affine group scheme of finite type over a noetherian ring $R$.  Let
  $\Spec A$ and $\Spec B$ be affine schemes of finite type over $R$
  with actions of $G$.  Let $\lambda \co \bG_{m} \to G$ be a
  one-parameter subgroup.  If $A$ is $\lambda$-positive with
  $A^{\lambda}=R$ and $B$ is $\lambda$-positive
  (resp. $\lambda$-semipositive), then $A \tensor_R B$ is
  $\lambda$-positive (resp. $\lambda$-semipositive) and
  $(A \tensor_R B)^G = B^G$.
\end{lemma}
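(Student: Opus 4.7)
The plan is to decompose everything along the $\bZ$-grading induced by $\lambda$ and then read off both claims. Write $A = \bigoplus_d A_d$ and $B = \bigoplus_d B_d$; the hypothesis $A^\lambda = R$ unpacks to $A_0 = R$, since $A^\lambda$ is the degree-$0$ piece. The tensor product carries the grading $(A \tensor_R B)_n = \bigoplus_{i+j=n} A_i \tensor_R B_j$ induced by the diagonal $\lambda$-action.

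For the first assertion, I would observe that for $n < 0$, every pair $(i,j)$ with $i+j=n$ has either $i < 0$ or $j < 0$, so the summand vanishes in both the $\lambda$-positive and $\lambda$-semipositive settings for $B$. In degree zero, the only surviving summand is $A_0 \tensor_R B_0 = B_0$, which is a finite $R$-module whenever $B$ is $\lambda$-positive. Hence $A \tensor_R B$ inherits $\lambda$-positivity or $\lambda$-semipositivity from $B$ accordingly.

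For the invariants, I would use two ingredients. First, since the $\lambda$-action factors through $G$, we have the containment $(A \tensor_R B)^G \subseteq (A \tensor_R B)^\lambda = (A \tensor_R B)_0 = 1 \tensor B_0$, and similarly $B^G \subseteq B^\lambda = B_0$. Second, the map $\iota \co B \to A \tensor_R B$, $b \mapsto 1 \tensor b$, is $G$-equivariant because $G$ acts on $A$ by $R$-algebra automorphisms (so fixes $1_A$); it is also split injective via the retraction $\pi_0 \tensor \id_B \co A \tensor_R B \to B$ coming from the grading projection $\pi_0 \co A \to A_0 = R$. Combining these, the element $1 \tensor b_0$ with $b_0 \in B_0$ lies in $(A \tensor_R B)^G$ if and only if $gb_0 = b_0$ for every $g \in G$, i.e.\ if and only if $b_0 \in B^G$. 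This identifies $(A \tensor_R B)^G$ with $B^G$.

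No serious obstacle stands in the way here; everything hinges on the splitting $R = A_0 \into A \onto R$ provided by the positive grading, which allows one to identify the $\lambda$-fixed locus of the tensor product cleanly with $B_0$ and to pass back and forth between $G$-invariance in $A \tensor_R B$ and $G$-invariance in $B$.
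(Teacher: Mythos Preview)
Your proposal is correct and follows essentially the same approach as the paper: decompose along the $\lambda$-grading to compute $(A \tensor_R B)^{\lambda} = A^{\lambda} \tensor_R B^{\lambda} = B^{\lambda}$, and then deduce $(A \tensor_R B)^G = B^G$ from the containment $(-)^G \subseteq (-)^{\lambda}$ together with the $G$-equivariance of $b \mapsto 1 \tensor b$. You supply more detail than the paper (which compresses the invariants step into a single sentence), but the underlying idea is identical.
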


\begin{proof}
  Let $A = \bigoplus_{d \ge 0} A_d$ and $B = \bigoplus_{d \ge 0} B_d$ be the gradings induced by the $\lambda$-action.  Then $(A \tensor_R B)_d = \bigoplus_{i + j = d} A_i \tensor B_j$ is $\lambda$-semipositive with 
  $$(A \tensor_R B)^{\lambda} = A^{\lambda} \tensor_R B^{\lambda} = B^{\lambda}.$$
  It follows that $(A \tensor_R B)^G = B^G$.
\end{proof}

\begin{example} \label{ex:lambda-unipotents-are-positively-graded} Let
  $G$ be a reductive algebraic group over an algebraically closed
  field $k$. If $\lambda \co \bG_m \to G$ is a one-parameter subgroup,
  then the $\lambda$-grading on
  $\Gamma(U_{\lambda})=\Gamma(U_{\lambda}, \oh_{U_{\lambda}})$ is
  positively graded and $\Gamma(U_{\lambda})^{\lambda} = k$.  To see
  this, choose a maximal torus $T$ containing $\lambda$. Since the
  root system $\Phi(G,T)$ is reduced \cite[Cor.\
  2.2.1]{ref:conrad_alg_notes}, there is a decomposition as schemes
  $U_{\lambda}\simeq \prod_a U_a$ \cite[Cor.\ 3.3.12]{ref:cgp}, where
  the product is over all root groups $U_a$ with $a \in \Phi(G,T)$
  satisfying $\langle a, \lambda \rangle > 0$. Each $U_a \simeq \bG_a$
  as group schemes over $k$, hence
  $\Gamma(U_{\lambda}, \oh_{U_{\lambda}})$ is a polynomial ring in the
  variables $x_a$ indexed by $a \in \Phi(G,T)$, where the
  $\lambda$-weight of $x_a$ is $\langle a, \lambda \rangle$
  \cite[Prop.\ 2.1.8]{ref:cgp}. Therefore, the $\lambda$-weight of a
  monomial $x_{a_1}^{m_1}\cdots x_{a_k}^{m_k}$ is strictly positive
  unless $m_1 = \cdots = m_k = 0$.  It then follows from
  \Cref{lem:positivity-under-products} that the $\lambda$-grading on
  $\Gamma(U_{\lambda})^{\tensor n}$ is also positively graded for
  $n \ge 1$ and that
  $(\Gamma(U_{\lambda})^{\tensor n})^{\lambda} = k$.

  More generally, if $G$ is a split reductive group scheme over a
  noetherian ring $R$ and $\lambda$ is a one-parameter subgroup
  contained in a maximal torus $T \subset G$, then
  $\Gamma(U_{\lambda})$ is $\lambda$-positive with
  $\Gamma(U_{\lambda})^{\lambda} =
  R$.
\end{example}

\subsection{Regular one-parameter subgroups}
If $T$ is a torus over a noetherian ring $R$, there is a
perfect bilinear pairing between the \emph{character lattice}
$\bX^\ast(T) \coloneqq \Hom(T,\bG_m)$ and the \textit{lattice of
  one-parameter subgroups} (or \emph{cocharacter lattice})
$\bX_\ast(T) \coloneqq \Hom(\bG_m, T)$
\begin{equation}\label{eq:pairing} 
  \langle \cdot, \cdot \rangle \co 
  \bX^\ast(T) \times \bX_\ast(T) 
  \to \operatorname{End}(\bG_m) \simeq \bZ
\end{equation}
given by $\langle a, \lambda \rangle = a \circ \lambda$.

Recall that if $T$ is a maximal torus of $G$, the \textit{root system}
of the pair $(G,T)$ is the subset
$\Phi(G,T) \subset \bX^\ast(T) - \{0\}$ consisting of non-trivial
weights for the adjoint action of $T$ on
$\operatorname{Lie}(G)$. 

\begin{definition}
  A one-parameter subgroup $\lambda \co \bG_m \to T$ is
  \textit{regular with respect to $T$} if
  $\langle a, \lambda \rangle \neq 0$ for all $a \in \Phi(G,T)$.
\end{definition}

\begin{remark}
  When $G = \GL_n$ and $T$ is the diagonal torus in $G$, then the Lie algebra ${\rm Lie}(G) = {\rm Mat}_{n,n}$ is the vector space of $n \times n$ matrices. The basis element $E_{i,j}$ with a 1 in position $(i,j)$ and 0 elsewhere has weight $\chi_i - \chi_j$, where $\chi_i \co T \to \bG_m$ denotes the character defined by $(t_k) \mapsto t_i$.  Therefore a one-parameter subgroup $\lambda=(\lambda_k) \co \bG_m \to T$ is regular if and only if the $\lambda_i$'s are distinct.
\end{remark}

\begin{lemma}\label{L:regular} 
  Let $G$ be a connected, smooth, and affine group scheme  over a noetherian ring $R$. Let $T \subset G$ be a
  maximal torus and $\lambda$ a one-parameter subgroup of $T$.
  \begin{enumerate}[{\rm(1)}]
    \item \label{L:regular_1} 
     $\lambda$ is regular with respect to $T$ if and only if $\Cent^G(T) = Z_{\lambda}$. 
   \item \label{L:regular_2} If $G$ is reductive, then $\lambda$ is regular with respect to $T$ if and only if
     $Z_{\lambda}$ is a maximal torus in $G$.
   \item \label{L:regular_3} If $T' \subset G$ is another maximal
     torus containing the image of $\lambda$, then $\lambda$ is
     regular with respect to $T$ if and only the same is true with
     respect to $T'$. In other words, the definition of regularity is
     independent of the maximal torus $T$ chosen. 
  \end{enumerate}
\end{lemma}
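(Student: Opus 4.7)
The plan is to analyze each claim by reducing to the split case and comparing Lie algebras. The inclusion $\Cent^G(T) \subseteq Z_\lambda$ is immediate, since any element of $G$ commuting with $T$ certainly commutes with $\lambda(\bG_m) \subseteq T$. Both subgroup schemes are smooth: $\Cent^G(T)$ by the standard smoothness of centralizers of tori in smooth affine group schemes (Grothendieck), and $Z_\lambda = G^0_\lambda$ because $\bG_m$ is linearly reductive acting on the smooth scheme $G$ by conjugation. Since both subgroup schemes contain $T$, and smooth closed immersions of smooth group schemes are determined by the inclusion of Lie algebras, it suffices to check when the Lie algebras agree. I would reduce to the case that $T$ is split by an fpqc (in fact \'etale) base change, since regularity is itself fpqc-local. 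In the split situation, there is a weight decomposition $\Lie(G) = \Lie(T) \oplus \bigoplus_{a \in \Phi(G,T)} \fg_a$ as a $T$-representation, and $\bG_m$ acts on $\fg_a$ via $\lambda$ with weight $\langle a, \lambda\rangle$. Hence $\Lie(\Cent^G(T)) = \Lie(T)$ while $\Lie(Z_\lambda) = \Lie(T) \oplus \bigoplus_{\langle a,\lambda\rangle = 0} \fg_a$; these agree precisely when no root pairs trivially with $\lambda$, proving \eqref{L:regular_1}.

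For \eqref{L:regular_2}, recall that when $G$ is reductive the maximal torus is self-centralizing, so $\Cent^G(T)=T$. By \eqref{L:regular_1}, regularity of $\lambda$ is then equivalent to $Z_\lambda = T$, i.e.\ to $Z_\lambda$ being a torus. For one direction this clearly says $Z_\lambda$ is a maximal torus; for the other, if $Z_\lambda$ is a maximal torus, then it contains the maximal torus $T$, forcing $Z_\lambda = T$.

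For \eqref{L:regular_3}, since $\lambda(\bG_m) \subseteq T \cap T'$ we see that both $T$ and $T'$ are maximal tori of $Z_\lambda$. After an fpqc base change, conjugacy of maximal tori in the smooth affine group scheme $Z_\lambda$ provides $g \in Z_\lambda$ with $gTg^{-1} = T'$. Conjugation by $g$ induces a bijection $\Phi(G,T)\iso\Phi(G,T')$ and fixes $\lambda$ (because $g \in Z_\lambda$ centralizes it), so the pairings $\langle a,\lambda\rangle$ are preserved term-by-term and regularity with respect to $T$ coincides with regularity with respect to $T'$; since regularity is fpqc-local, the identity descends to $R$.

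The main technical obstacle is handling the general noetherian base: both the Lie-algebra comparison and the conjugacy of maximal tori are most comfortably run over an algebraically closed field, so the arguments have to be set up so that they pass cleanly through fpqc descent to the split case, and one has to verify that $\Cent^G(T)$ and $Z_\lambda$ are genuinely smooth and \emph{connected} (containing $T$) over $R$ before invoking the Lie-algebra criterion for equality.
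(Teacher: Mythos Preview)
Your argument for \eqref{L:regular_1} and \eqref{L:regular_2} is essentially the paper's: reduce to a situation where the Lie algebras can be compared directly and read off regularity from the weight decomposition. One correction: for general (not necessarily reductive) $G$, the zero-weight space of $\Lie(G)$ under $T$ is $\Lie(\Cent^G(T))$, which need not equal $\Lie(T)$. Your displayed decomposition $\Lie(G) = \Lie(T) \oplus \bigoplus_{a\in\Phi(G,T)} \fg_a$ and the identity $\Lie(\Cent^G(T)) = \Lie(T)$ are only valid in the reductive case. The fix is cosmetic---replace $\Lie(T)$ by $\Lie(G)_0 = \Lie(G)^T = \Lie(\Cent^G(T))$ throughout---and the conclusion is unaffected, since the comparison $\Lie(\Cent^G(T)) \subseteq \Lie(Z_\lambda)$ is still an equality exactly when no root pairs to zero with $\lambda$. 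The paper phrases the reduction as ``check on geometric fibers'' rather than ``fpqc-descend from the split case,'' but these amount to the same thing; your flagged concern about connectedness is handled in the paper by citation.

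For \eqref{L:regular_3} your route genuinely differs from the paper's. The paper argues by dimension: over an algebraically closed field, all maximal tori of $G$ are conjugate, so $\dim \Cent^G(T) = \dim \Cent^G(T')$; if $\lambda$ is regular with respect to $T$ then $\dim Z_\lambda = \dim \Cent^G(T) = \dim \Cent^G(T')$, and since $\Cent^G(T') \subseteq Z_\lambda$ are both smooth connected, they coincide. Your argument is more structural: you observe that $T$ and $T'$ are maximal tori of the smooth connected group $Z_\lambda$, conjugate them by some $g \in Z_\lambda$, and use that $g$ centralizes $\lambda$ to transport the pairings $\langle a,\lambda\rangle$ along the induced bijection $\Phi(G,T) \simeq \Phi(G,T')$. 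Both are correct. The paper's version is quicker and avoids tracking the root bijection; yours is more explicit about \emph{why} the regularity condition transfers, and would generalize more readily if one wanted finer information about how the two root systems match up.
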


\begin{proof}
  Since $G$ is smooth connected, so too are $\Cent^G(T)$
  \cite[Lem.~2.2.4]{ref:conrad_red} and $Z_{\lambda}$. Therefore, the
  containment $\Cent^G(T) \subset Z_{\lambda}$ is an equality if and
  only if it is true on geometric fibers. In particular, we may assume
  that $R$ is an algebraically closed field. In this case, it suffices
  to prove that
  $\operatorname{Lie}(\Cent^G(T)) =
  \operatorname{Lie}(Z_{\lambda})$. Now
  \[
\operatorname{Lie}(G)_0=\operatorname{Lie}(G)^T=\operatorname{Lie}(\Cent^G(T)) \subset  \operatorname{Lie}(Z_{\lambda}) = \operatorname{Lie}(G)^\lambda =\oplus_{a\in \Phi(G,T)\cup \{0\}\,:\,\langle
    a, \lambda \rangle = 0} \operatorname{Lie}(G)_a.
  \]
  This proves that $\lambda$ is regular if and only if
  $\Cent^G(T) = Z_{\lambda}$. Moreover if $G$ is reductive then
  $\Cent^G(T) = T$ \cite[Prop.\ 2.3.1]{ref:conrad_alg_notes}, proving
  \eqref{L:regular_1} and \eqref{L:regular_2}.

  To prove \eqref{L:regular_3}, suppose that the image of $\lambda$ is
  contained in some other maximal torus $T'$. By the preceding
  discussion, it suffices to show that
  $\dim \Cent^G(T') = \dim Z_{\lambda}$ when $R$ is an algebraically
  closed field. We have $\dim Z_{\lambda}= \dim \Cent^G(T)$ since
  $\lambda$ is regular with respect to $T$. On the other hand, all
  maximal tori of $G$ are $R$-conjugate, so
  $\dim \Cent^G(T') = \dim \Cent^G(T)$ and hence $\lambda$ is also
  regular with respect to $T'$.
\end{proof}

\subsection{Destabilizing one-parameter subgroups}

A degeneration from a non-closed orbit to a closed orbit can be realized by a one-parameter subgroup.  This is the classical destabilization theorem in GIT, which implies the Hilbert--Mumford criterion \cite[p.~53]{git}.

\begin{proposition} \label{P:destabilization}
  Let $G$ be a reductive algebraic group over an algebraically closed field $k$.  Let $X$ be an affine scheme of finite type over $k$ with an action of $G$.  For any point $x \in X(k)$, there is a one-parameter subgroup $\lambda \co \bG_m \to G$ such that $\lim_{t \to 0} \lambda(t) \cdot x$ exists and has closed orbit.
\end{proposition}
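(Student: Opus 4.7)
The strategy is classical, going back to Mumford, Birkes, and Kempf, and proceeds in four stages.

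First, I would locate a closed orbit in the orbit closure. Since $G$ is reductive and $X$ is affine, the affine GIT quotient $\pi \co X \to X \gitq G = \Spec A^G$ exists; the orbit $G \cdot x$ maps to a single point $\bar{x}$, and the fiber $\pi^{-1}(\bar{x})$ contains a unique closed $G$-orbit $Y$, which necessarily lies in $\overline{G \cdot x}$. If $G \cdot x = Y$ is already closed, take $\lambda$ trivial; otherwise $Y \subsetneq \overline{G \cdot x}$.

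Second, I would connect $x$ to $Y$ by a trace over a DVR. Irreducibility of $\overline{G \cdot x}$ yields an integral curve $C \subset \overline{G \cdot x}$ meeting both $G \cdot x$ and $Y$; normalizing $C$ and localizing at a preimage of a point of $Y$ produces a DVR $R$ with residue field $k$ and fraction field $K$, together with a morphism $\phi \co \Spec R \to \overline{G \cdot x}$ such that $\phi(\eta) \in G \cdot x$ and $\phi(0) \in Y$. Since the orbit map $G \to G \cdot x \simeq G/G_x$ is smooth and surjective, after passing to a finite separable extension $K'/K$ and a DVR $R' \subset K'$ dominating $R$, I may lift $\phi(\eta)$ to an element $g \in G(K')$ with $g \cdot x = \phi(\eta)$.

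Third, I would factor $g$ via a cocharacter through a refined Cartan decomposition. Because $k$ is algebraically closed, $G$ is split reductive, and the Iwahori--Matsumoto decomposition of $G(K')$ writes
\[
  g = k_1 \cdot \lambda(\pi') \cdot k_2
\]
with $k_1, k_2 \in G(R')$, $\pi' \in R'$ a uniformizer, and $\lambda \co \bG_m \to G$ a cocharacter of a maximal torus. I would refine this (via Iwasawa together with projectivity of $G/P_\lambda$) so that $k_2 \in P_\lambda(R')$. Writing $k_2 = u \cdot z$ under the split extension $P_\lambda = U_\lambda \rtimes Z_\lambda$ with $u \in U_\lambda(R')$, $z \in Z_\lambda(R')$, and exploiting that $\lambda$ acts with strictly positive weights on $U_\lambda$ while centralizing $Z_\lambda$, one sees that $h(\pi') := \lambda(\pi') k_2(\pi') \lambda(\pi')^{-1}$ extends to $\Spec R'$ with $h(0) = z(0) \in Z_\lambda(k)$.

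Fourth, rewriting $\phi(\pi') = k_1(\pi') \cdot h(\pi') \cdot \lambda(\pi') \cdot x$ and using that $\phi$, $k_1$, and $h$ all extend to $\Spec R'$, it follows that $\lambda(\pi') \cdot x$ extends as well; in other words $x \in X^+_{\lambda}(k)$ and the limit $\lim_{t \to 0} \lambda(t) \cdot x$ exists. Passing to the special fiber,
\[
  \phi(0) \;=\; k_1(0) \cdot h(0) \cdot \lim_{t \to 0} \lambda(t) \cdot x,
\]
so since $\phi(0) \in Y$ and $Y$ is $G$-invariant, the limit lies in $Y$, as required. The main obstacle is the refinement in step three: without arranging $k_2 \in P_\lambda(R')$, the conjugate $\lambda(t) k_2(t) \lambda(t)^{-1}$ may diverge as $t \to 0$ and $\lambda(t) \cdot x$ need not extend over $0$, so the argument collapses. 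This strengthened decomposition is standard for split reductive groups over discrete valuation rings and follows from the affine Bruhat/Iwahori decomposition.
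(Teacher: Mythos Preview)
The paper does not prove this proposition; it is simply recorded as the classical destabilization theorem and attributed to \cite[p.~53]{git}. Your proposal reconstructs the standard argument, and the overall shape (closed orbit in the closure, DVR trace, Cartan decomposition over the DVR) is correct and is exactly what underlies the cited reference.

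However, your diagnosis of the ``main obstacle'' is off, and the refinement you propose in step three is both unnecessary and not clearly available. From the unrefined Cartan decomposition $g = k_1\,\lambda(\pi')\,k_2$ with $k_1,k_2 \in G(R')$, one already has that $k_1^{-1}\phi = \lambda(\pi')\cdot(k_2\cdot x)$ extends over $R'$; reading this in the $\lambda$-grading of $A$ forces $x' := k_2(0)\cdot x \in X^+_\lambda(k)$, so $x'_0 := \lim_{t\to 0}\lambda(t)\cdot x'$ exists. The special value $\xi_0 := k_1(0)^{-1}\phi(0)$ lies in $Y$ and agrees with $x'_0$ on $A_{\geq 0}$ (while vanishing on $A_{>0}$), whence $x'_0 = \lim_{t\to 0}\lambda(t)^{-1}\cdot \xi_0 \in \overline{G\cdot\xi_0} = Y$ since $Y$ is closed. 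Conjugating $\lambda$ by $k_2(0)$ then gives the desired one-parameter subgroup for $x$ itself. By contrast, arranging $k_2 \in P_\lambda(R')$ for the \emph{same} $\lambda$ produced by the Cartan decomposition is not a formal consequence of Iwasawa: the projectivity of $G/P_\lambda$ gives $G(K') = G(R')\cdot P_\lambda(K')$, but combining this with Cartan while keeping $\lambda$ fixed requires an argument you have not supplied.
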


When $X$ is integral and has a unique closed orbit, it is possible to find a one-parameter subgroup destabilizing every orbit.

\begin{proposition} \label{P:generic-destabilization}
  Let $G$ be a reductive algebraic group over an algebraically closed field $k$.  Let $X = \Spec A$ be an integral affine scheme of finite type over $k$ with an action of $G$. Assume that there is a unique closed orbit $G$-orbit $Gx_0 \subset X$.  
  Then there exists a one-parameter subgroup $\lambda \co \bG_m \to G$ such that for any point $x \in X(k)$, there exists $g \in G(k)$ such that $\lim_{t \to 0} \lambda(t) \cdot gx \in Gx_0$.  In particular, $G \cdot X^+_\lambda = X$ and $\emptyset \neq X^0_\lambda \subset Gx_0$. 
\end{proposition}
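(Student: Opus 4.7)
The plan is to reduce to Proposition~\ref{P:destabilization} via a finiteness argument on attractor configurations, then use irreducibility of $X$ to select a single destabilizing cocharacter.

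First, I would show $A^G = k$. By Nagata, $A^G$ is a finitely generated $k$-algebra. Closed points of $\Spec A^G$ correspond bijectively to closed $G$-orbits in $X$, so the unique-closed-orbit hypothesis forces $\Spec A^G$ to have a single closed point. Since $\Spec A^G$ is a Jacobson scheme (being finitely generated over $k$), this means $\Spec A^G$ is zero-dimensional. As $X$ is integral, $A^G$ is an integral domain; a zero-dimensional finitely generated integral domain over an algebraically closed field is the field itself, so $A^G = k$.

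Next, fix a maximal torus $T \subset G$ and a finite-dimensional $T$-stable subspace $V \subset A$ generating $A$ as a $k$-algebra, with finite weight set $\Phi \subset \bX^*(T)$. For $\lambda \in \bX_*(T)$, the ideal $I^-_\lambda$ cutting out $X^+_\lambda$ is generated by the elements of $A$ of negative $\lambda$-weight; since any monomial in the $V$-generators has negative $\lambda$-weight only if at least one factor does, $I^-_\lambda$ is generated by $\{V_\chi : \langle \chi, \lambda \rangle < 0\}$, and similarly $X^0_\lambda$ is cut out by $\{V_\chi : \langle \chi, \lambda \rangle \neq 0\}$. Hence $X^+_\lambda$ and $X^0_\lambda$ depend on $\lambda$ only through its sign pattern on $\Phi$, yielding only finitely many possibilities as $\lambda$ varies over $\bX_*(T)$. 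Moreover, the natural map $G \times^{P_\lambda} X^+_\lambda \to X$ is proper (it factors through the closed immersion into $X \times G/P_\lambda$ sending $(g,x)\mapsto (gx,gP_\lambda)$, followed by the projective projection to $X$), so each $G$-saturation $G \cdot (X^+_\lambda \cap \ev_\lambda^{-1}(Gx_0))$ is closed in $X$.

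Applying Proposition~\ref{P:destabilization} at each $x \in X(k)$ and using uniqueness of the closed orbit yields, after $G$-conjugation into $T$, a cocharacter $\lambda \in \bX_*(T)$ and $g \in G$ such that $gx \in X^+_\lambda \cap \ev_\lambda^{-1}(Gx_0)$. By the finiteness established above, $X$ is covered by finitely many closed subsets $G \cdot (X^+_\lambda \cap \ev_\lambda^{-1}(Gx_0))$. Since $X$ is irreducible, one such subset equals all of $X$; this identifies the required $\lambda^*$. The main destabilization statement, together with $G \cdot X^+_{\lambda^*} = X$ and $X^0_{\lambda^*} \neq \emptyset$, then follows.

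The principal obstacle is verifying the remaining inclusion $X^0_{\lambda^*} \subset Gx_0$. For $y \in X^0_{\lambda^*}$, write $y = gx$ with $x \in X^+_{\lambda^*} \cap \ev_{\lambda^*}^{-1}(Gx_0)$; the $\lambda^*$-invariance of $y$ gives
\[
y \;=\; \lim_{t\to 0} \lambda^*(t)y \;=\; \lim_{t\to 0}\bigl(\lambda^*(t)g\lambda^*(t)^{-1}\bigr)\cdot \lambda^*(t) x,
\]
so the strategy is to arrange that $g_0 := \lim_{t\to 0}\lambda^*(t)g\lambda^*(t)^{-1}$ exists (i.e., $g \in P_{\lambda^*}$), in which case $g_0 \in Z_{\lambda^*}$ and $y = g_0 \cdot \ev_{\lambda^*}(x) \in Z_{\lambda^*} \cdot Gx_0 = Gx_0$. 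Reducing $g$ modulo $P_{\lambda^*}$ is the delicate step and may require refining the choice of $\lambda^*$ within its cell, or an induction on $\dim X^0_{\lambda^*}$ applied to the reductive pair $(X^0_{\lambda^*}, Z_{\lambda^*})$, to guarantee that every $\lambda^*$-fixed point admits a representation $y = gx$ with $g \in P_{\lambda^*}$.
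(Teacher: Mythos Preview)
Your argument for the main assertion—the existence of a single $\lambda$ destabilizing every $G$-orbit to $Gx_0$—is correct and takes a different route from the paper. The paper applies Proposition~\ref{P:destabilization} to the generic point $\overline{\xi}\in X(\overline{K(X)})$, conjugates the resulting cocharacter into a fixed maximal torus to descend it to $k$, and then observes that the proper morphism $[X^+_\lambda/P_\lambda]\to [X/G]$ hits the generic point and is therefore surjective (with an alternative sketch via the Kempf--Hesselink stratification). You instead use the finiteness of sign patterns on the generating weight set $\Phi$ to produce a finite closed cover of $X$ by the sets $G\cdot(X^+_\lambda\cap\ev_\lambda^{-1}(Gx_0))$, then invoke irreducibility. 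Both arguments ultimately rest on properness of $G\times^{P_\lambda}X^+_\lambda\to X$; yours trades the function-field excursion for a combinatorial finiteness step. One small point: you should enlarge $\Phi$ to include the roots of $(G,T)$, so that $P_\lambda$ (and hence the saturation) is also constant on each sign cell.

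You are right to flag $X^0_{\lambda^*}\subset Gx_0$ as a genuine obstacle, and the strategy of forcing $g\in P_{\lambda^*}$ will not succeed without further input. The paper's proof does not address this inclusion either, and in fact it is \emph{not} a formal consequence of the main destabilization statement. On the closure $X$ of the minimal nilpotent orbit in $\mathfrak{sl}_3$ under the adjoint $\SL_3$-action (say in characteristic zero, so $A^G=k$), the cocharacter $\lambda(t)=\mathrm{diag}(t,t,t^{-2})$ satisfies the main conclusion---every rank-one nilpotent is conjugate into $\mathrm{span}(E_{13},E_{23})$, which $\lambda$-flows to $0$---yet $X^0_\lambda$ contains the nilpotent cone of the upper-left $\mathfrak{sl}_2$ block and is not contained in $\{0\}=Gx_0$. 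So establishing $X^0_\lambda\subset Gx_0$ requires a more specific choice of $\lambda$ than either your argument or the paper's first proof produces.
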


\begin{proof}
  Let $K = K(X)$ be the function field of $X$ and let $\xi \in X(K)$
  be the generic point.  Letting $K \to \overline{K}$ be an algebraic
  closure, we consider the base change $X \otimes_k \overline{K}$ with
  the point
  $\overline{\xi} \in (X \otimes_k \overline{K})(\overline{K})$
  induced from $\xi$. By \Cref{P:destabilization}, there is a
  one-parameter subgroup $\overline{\lambda} \colon \bG_{m, \overline{K}} \to G_{\overline{K}}$
  such that $\lim_{t \to 0} \overline{\lambda}(t) \cdot \overline{\xi}$ exists and has
  closed $G_{\overline{K}}$-orbit.  If $T \subset G$ is a maximal torus,
  then there exists an element $g \in G(\overline{K})$ such that
  $g \overline{\lambda} g^{-1} \in T_{\overline{K}}$.  Since extension of scalars gives an
  isomorphism
  \[
    \bZ^n \cong \Hom_{k}(\bG_{m}, T) 
    \iso \Hom_{\overline{K}}(\bG_{m, \overline{K}}, T_{\overline{K}}) \cong \bZ^n,
  \]
  there is a one-parameter subgroup $\lambda \co \bG_m \to T$ with
  $\lambda_{\overline{K}} = g \overline{\lambda} g^{-1}$.  Since the
  generic point $\xi \in [X/G](K)$ is in the image of the proper map
  $[X^+_{\lambda}/P_{\lambda}] \to [X/G]$, this map is surjective.
  Moreover, every point $x \in [X/G](k)$ has a representative in
  $X(k)$ which specializes via $\lambda$ to the unique closed orbit.

  Alternatively, we can appeal to the Kempf--Hesselink stratification.  Kempf proved in \cite[Thm.~3.4]{ref:kempf} that for any point  $x \in X(k)$, there is a  destabilizing one-parameter subgroup $\lambda_x \co \bG_m \to G$ (unique up to conjugation by the unipotent radical $U_{\lambda_x}$) with $\lim_{t \to 0} \lambda_x(t) x \in G x_0$ minimizing the normalized Hilbert--Mumford index $\mu(x,-) / ||-||$, where $|| - ||$ is a fixed conjugation-invariant norm on $\bX_*(G)$. 
  In \cite{hesselink}, Hesselink used Kempf's optimal destabilizing one-parameter subgroup 
  to show that $X \setminus G x_0 = \coprod S_{\lambda_i,M_i}$ admits a stratification into locally closed $G$-invariant subschemes where each $\lambda_i$ is a one-parameter subgroup and $M_i < 0$.  For any $k$-point $x \in S_{\lambda_i,M_i}$ there is an element $g \in G(k)$ such that $\lambda_i$ is an optimal one-parameter subgroup for $gx$ with $\mu(gx,\lambda_i)/||\lambda_i|| = M_i$.  Since $X$ is integral, there is a generic strata $S_{\lambda, M}$ for some $(\lambda, M) \in \{(\lambda_i, M_i) \}$, which is dense.  Since $[X^+_{\lambda}/P_{\lambda}] \to [X/G]$ is proper and its image contains the dense set $[S_{\lambda,M}/G]$, it must be surjective.  The statement follows.
\end{proof}

\begin{example}
  Under the $\SL_n$ action on $\bA^n$ given by the standard representation, the origin is the unique closed orbit and the complement $\bA^n \setminus 0$ is a dense orbit.  Kempf's  optimal one-parameter subgroup for $(1, 0, \ldots, 0)$ is $\lambda(t) = \mathrm{diag}(t^{n-1}, t^{-1}, \ldots, t^{-1})$. Observe that this subgroup is unfortunately not regular, which is a property we desire for our application.  However, nearby deformations are regular: for  distinct positive integers $d_1, \ldots, d_{n-1}$ with sum $d$, the one-parameter subgroup $\lambda'(t) =  \mathrm{diag}(t^{d}, t^{-d_1}, \ldots, t^{-d_{n-1}})$ is regular and also destabilizes the generic orbit.
\end{example}

In fact, one can always find destabilizing one-parameter subgroups $\lambda$ of a point $x$ such that $\lambda$ is regular in the stabilizer of the limit $x_0 = \lim_{t \to 0} \lambda(t) \cdot x$.  This is proven in \Cref{prop:refined-destabilization}, which relies on the following lemma.

\begin{lemma} \label{lem:deformation-1ps}
  Let $G$ be a reductive algebraic group over an algebraically closed field $k$.  Let $X$ be an affine scheme of finite type over $k$ with an action of $G$ and let $x \in X(k)$. Let $\lambda \co \bG_m \to G$ be a one-parameter subgroup such that $x_0 := \lim_{t \to 0} \lambda(t) x$ has closed orbit.  If $\rho \co \bG_m \to G_{x_0}$ is a one-parameter subgroup commuting with $\lambda$ (e.g., $\rho$ and $\lambda$ are contained in the same maximal torus of $G_{x_0}$), then for $n \gg 0$
  $$\lim_{t \to 0} (\lambda^n \rho)(t) \cdot x \in G x_0.$$
\end{lemma}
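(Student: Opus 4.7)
The plan is to reduce to the case of a linear action and then analyze a joint weight decomposition for $\lambda$ and $\rho$. Since $G$ is reductive and $X = \Spec A$ is affine of finite type, the $G$-action on $A$ extends a finite-dimensional $G$-stable subspace $W \subset A$ generating $A$ as a $k$-algebra, and dualizing produces a $G$-equivariant closed embedding $X \into V := W^\vee$ into a finite-dimensional $G$-representation. Because $X \subset V$ is closed, any morphism $\bA^1 \to V$ whose restriction to $\bG_m$ factors through $X$ automatically factors through $X$. It therefore suffices to exhibit the limit in $V$.

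Since $\lambda$ and $\rho$ commute, the rule $(s,t) \mapsto \lambda(s)\rho(t)$ defines a homomorphism $\bG_m \times \bG_m \to G$, and the resulting $\bG_m^2$-action on $V$ gives a decomposition
\[
V = \bigoplus_{(a,b) \in \bZ^2} V_{a,b},
\]
where $\lambda(s)\rho(t)$ acts on $V_{a,b}$ by $s^a t^b$. Write $x = \sum x_{a,b}$ with $x_{a,b} \in V_{a,b}$; only finitely many summands are nonzero. The existence of $x_0 = \lim_{s \to 0} \lambda(s) \cdot x$ forces $x_{a,b} = 0$ whenever $a < 0$, and then $x_0 = \sum_{b} x_{0,b}$. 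The hypothesis that $\rho$ lands in $G_{x_0}$ means $\rho(t) \cdot x_0 = x_0$ for all $t$; applying this in the weight decomposition forces $x_{0,b} = 0$ for every $b \neq 0$, so $x_0 = x_{0,0}$.

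Now compute, for any $n \geq 1$,
\[
(\lambda^n \rho)(s) \cdot x \;=\; \sum_{a \geq 0,\; b} s^{\,na + b}\, x_{a,b}.
\]
Since the set of $(a,b)$ with $x_{a,b} \neq 0$ is finite and all such pairs have $a \geq 0$, we may choose $n$ large enough that $na + b > 0$ whenever $a \geq 1$ and $x_{a,b} \neq 0$; for $a = 0$ only the term $(0,0)$ survives by the previous paragraph. For such $n$ every nonzero contribution other than $x_{0,0}$ carries a strictly positive power of $s$, so the limit as $s \to 0$ exists in $V$ and equals $x_{0,0} = x_0 \in Gx_0$. By the closedness of $X$ in $V$, the limit exists in $X$ and lies in $Gx_0$, as required.

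The only mildly delicate step is the initial reduction, which uses reductivity of $G$ to produce an equivariant embedding of $X$ into a representation; the rest is a formal consequence of the joint weight decomposition and the finite-dimensionality of $V$.
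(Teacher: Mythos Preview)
Your proof is correct and considerably more elementary than the paper's. The paper argues via a stacky pushout: it identifies the pushout of $[\bA^1/\bG_m] \hookleftarrow B\bG_m \hookrightarrow [\bA^1/\bG_m]\times B\bG_m$ as an inverse limit $\varprojlim_n [\bA^2/_{D_n}\bG_m^2]$ of quotient stacks, observes that the data $(\lambda_x,\rho)$ assemble to a map from this pushout to $[X/G]$, and then uses finite-typeness of $[X/G]$ to factor through some finite stage, from which the result is read off. Your approach instead linearizes by equivariantly embedding $X$ into a representation $V$ and reads off the limit directly from the finite joint $(\lambda,\rho)$-weight decomposition of $x$. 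Both arguments in fact show the limit equals $x_0$ itself (not merely a point of $Gx_0$), so neither uses the closed-orbit hypothesis. Your argument is shorter and more transparent; the paper's has the mild advantage of being intrinsic to $[X/G]$ and not requiring a linearization. One small correction: the equivariant embedding of $X$ into a representation does not require reductivity of $G$; the action of any affine algebraic group on the coordinate ring $A$ is locally finite, so a finite-dimensional $G$-stable generating subspace $W\subset A$ always exists.
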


\begin{proof}
  We first determine the pushout of the diagram:
  \begin{equation} \label{eqn:pushout}
    \begin{split}
      \xymatrix{
        B \bG_m \ar@{^(->}[r] \ar@{^(->}[d]
          & [\bA^1/\bG_m] \\
        [\bA^1/\bG_m] \times B \bG_m,
      }
    \end{split}
  \end{equation}
  where the top map $B\bG_m \into [\bA^1/\bG_m]$ is the closed immersion defining the origin and the left map $B \bG_m \into [\bA^1/\bG_m] \times B \bG_m$ is the open immersion corresponding to the product of $B \bG_m$ and the inclusion of the open point $1 \co \Spec k \into [\bA^1/\bG_m]$. Expressing the diagram $\bG_m^2$-equivariantly, we can write $B \bG_m = [\Spec k[y]_y / \bG_m^2]$, $[\bA^1/\bG_m] \times B \bG_m = [\Spec k[y]/\bG_m^2]$, and $[\bA^1/\bG_m] = [\Spec k[y,z]_y / \bG_m^2]$ under the diagonal action $(t_1,t_2) \cdot (y,z) = (t_1 y, t_2z)$.  The $\bG_m^2$-equivariant pushout is determined by the fiber product of rings
  $$\xymatrix{
    k[y]_y
      & k[y,z]_y \ar[l]_{0 \mapsfrom z} \\
    k[y] \ar[u] 
      & k[y, \frac{z}{y}, \frac{z}{y^2}, \ldots]. \ar[l] \ar[u]
  }$$
  We can write $k[y, \frac{z}{y}, \frac{z}{y^2}, \ldots] = \bigcup_n k[y,z_n]$ with $z_n =\frac{z}{y^n}$, where $\bG_m^2$ acts via $(t_1, t_2) \cdot (y,z_n) = (t_1 y, \frac{t_2}{t_1^n} z_n)$.  Identifying $\bA^2 = \Spec k[y,z_n]$, then $\bG_m^2$ acts via the degree matrix $D_n = \begin{pmatrix} 1 & 0 \\ -n & 1 \end{pmatrix}$.  By generalities of pushouts of stacks (see \cite[\S 4]{ref:ahhlr}), the pushout of \eqref{eqn:pushout} is a limit
  $$\limit_n \, [\bA^2/_{D_n} \bG_m^2]$$
  of quotient stacks with affine transition morphisms.

  The point $x \in X(k)$ and one-parameter subgroup $\lambda$ define a morphism $\lambda_x \co [\bA^1/\bG_m] \to [X/G]$ with $\lambda_x(1) \simeq x$ and $\lambda_x(0) \simeq x_0$. Since $\rho$ commutes with $\lambda$, we also have a morphism $(\lambda_x, \rho) \co [\bA^1/\bG_m] \times B \bG_m \to [X/G]$ giving a commutative diagram of solid arrows
  $$      \xymatrix{
    B \bG_m \ar@{^(->}[r]^{z=0} \ar@{^(->}[d]^{y \neq 0}
      & [\bA^1/\bG_m] \ar[d] \ar@/^1pc/[ddr]^{\lambda_x} \\
    [\bA^1/\bG_m] \times B \bG_m \ar[r] \ar@/_1pc/[rrd]_{(\lambda_x,\rho)}
      & [\bA^2/_{D_n} \bG_m^2] \ar@{-->}[dr]^{\Psi} \\
    & & [X/G].
  }$$
  Since $[X/G]$ is of finite type, the induced morphism $\limit_n  [\bA^2/_{D_n} \bG_m^2] \to [X/G]$ factors through $[\bA^2/_{D_n} \bG_m^2]$ for $n \gg 0$, and thus defines the dotted arrow $\Psi \co [\bA^2/_{D_n} \bG_m^2]  \to [X/G]$.

  Now consider the map $[\bA^1/\bG_m] \to [\bA^2/_{D_n} \bG_m^2]$ induced by the diagonal $\bA^1 \to \bA^2$ and the group homomorphism $\bG_m \to \bG_m^2$ defined by $t \mapsto (t, t^{n+1})$.  The composition $[\bA^1/\bG_m] \to [\bA^2/_{D_n} \bG_m^2] \xto{\Psi} [X/G]$ defines a morphism such that 0 maps to $x_0$ and such that the induced map $\bG_m \to G_{x_0}$ on stabilizers is given by $\lambda^{n+1} \rho$.  The statement follows.
\end{proof}

\begin{proposition} \label{prop:refined-destabilization}
  Let $G$ be a reductive algebraic group over an algebraically closed field $k$, and let $X$ be an affine scheme of finite type over $k$ with an action of $G$.  For any point $x \in X(k)$, there is a one-parameter subgroup $\lambda \co \bG_m \to G$ such that $x_0 := \lim_{t \to 0} \lambda(t) \cdot x$ has closed orbit and such that the induced one-parameter subgroup $\lambda \co \bG_m \to G_{x_0}$ is regular.  

  Moreover, if there is a unique closed orbit $Gx_0$ and $X$ is integral, then there is a one-parameter subgroup $\lambda \co \bG_m \to G$ such that for any point $y \in X(k)$, there exists $g \in G(k)$ such that $y' := \lim_{t \to 0} \lambda(t) \cdot gy \in Gx_0$ and such that the induced map $\lambda \co \bG_m \to G_{y'}$ is a regular one-parameter subgroup.
\end{proposition}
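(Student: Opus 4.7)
My plan is to prove both statements by twisting the one-parameter subgroup produced by the classical destabilization theorems by a regular one-parameter subgroup of a maximal torus of the stabilizer of the limit, then invoking \Cref{lem:deformation-1ps} to preserve the destabilization.

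For the first statement, apply \Cref{P:destabilization} to $x$, obtaining $\lambda_1 \co \bG_m \to G$ with $x_0 := \lim_{t\to 0}\lambda_1(t)\cdot x$ of closed $G$-orbit. Then $G_{x_0}$ is reductive and contains $\lambda_1$. Fix a maximal torus $T \subseteq G_{x_0}$ containing $\lambda_1$ and a one-parameter subgroup $\rho \co \bG_m \to T$ that is regular in $G_{x_0}$. Set $\lambda := \lambda_1^N \rho$. For every root $a$ of $(G_{x_0},T)$ the pairing $\langle a,\lambda\rangle = N\langle a,\lambda_1\rangle + \langle a,\rho\rangle$ is nonzero as soon as $N$ exceeds finitely many thresholds, so $\lambda$ is regular in $G_{x_0}$ for $N\gg 0$. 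Since $\rho \in G_{x_0}$ commutes with $\lambda_1$ in $T$, \Cref{lem:deformation-1ps} applies, and its proof in fact constructs a pushout extension sending $0\in[\bA^1/\bG_m]$ to $x_0$, yielding the stronger conclusion $\lim_{t\to 0}\lambda(t)\cdot x = x_0$ for $N\gg 0$.

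For the moreover statement, I would replace \Cref{P:destabilization} by \Cref{P:generic-destabilization}, producing $\lambda_1$ with $G\cdot X^+_{\lambda_1} = X$ and $\emptyset \neq X^0_{\lambda_1}\subseteq Gx_0$; after translating $x_0$ within its orbit we may assume $x_0 \in X^0_{\lambda_1}$, so $\lambda_1 \subseteq G_{x_0}$. Construct $T$, $\rho$, and $\lambda := \lambda_1^N \rho$ exactly as above. For any $y \in X(k)$, pick $g \in G(k)$ with $gy \in X^+_{\lambda_1}$ and set $y_0 := \lim_{t\to 0}\lambda_1(t)\cdot gy \in X^0_{\lambda_1}\subseteq Gx_0$. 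If $y_0$ lies in the $Z_{\lambda_1}$-orbit of $x_0$, the $Z_{\lambda_1}$-equivariance of $\ev_0\co X^+_{\lambda_1}\to X^0_{\lambda_1}$ lets me replace $g$ by a $Z_{\lambda_1}$-translate to arrange $y_0 = x_0$, reducing to the case handled in the first part.

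The main obstacle is the case when $y_0$ lies in a distinct $Z_{\lambda_1}$-orbit. To handle this I plan to choose $\rho$ sufficiently generic in $X_\ast(T)_{\mathbf{Q}}$ so that the joint fixed locus satisfies $X^0_{\lambda_1}\cap X^0_\rho \subseteq (Gx_0)^T = N_G(T)\cdot x_0$, which is a finite set of cardinality $|W_G/W_{G_{x_0}}|$. A weight computation on the coordinate ring shows $X^0_\lambda = X^0_{\lambda_1}\cap X^0_\rho$ for $N\gg 0$, so every possible limit $y'$ equals $n\cdot x_0$ for some $n\in N_G(T)$, and regularity of $\lambda$ in $G_{y'}=nG_{x_0}n^{-1}$ reduces to regularity of $n^{-1}\lambda n = (n^{-1}\lambda_1 n)^N(n^{-1}\rho n)$ in $G_{x_0}$. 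Since $n$ normalizes $T$, this is a one-parameter subgroup of $T$, and via $\langle a, n^{-1}\lambda n\rangle = \langle n\cdot a,\lambda\rangle$ the required regularity becomes a finite set of linear conditions on $\rho$ indexed by the Weyl conjugates of $\Phi(G_{x_0},T)$; these are satisfied by a generic regular $\rho$ together with large $N$. The combination of the weight calculation identifying $X^0_\lambda$, the Weyl-conjugate pairing argument, and the genericity of $\rho$ is the most delicate part of the argument.
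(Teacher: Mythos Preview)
Your argument for the first statement is essentially the paper's: start from a destabilizing $\lambda_1$ via \Cref{P:destabilization}, pick a maximal torus $T\subset G_{x_0}$ containing $\lambda_1$ and a regular $\rho\in\bX_*(T)$, and replace $\lambda_1$ by $\lambda_1^N\rho$ using \Cref{lem:deformation-1ps}. Your observation that the proof of \Cref{lem:deformation-1ps} actually gives the limit equal to $x_0$ (not just in $Gx_0$) matches what the paper uses.

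The ``moreover'' part, however, has real gaps. The most serious is that you never establish that your chosen translate $gy$ lies in $X^+_{\lambda}$ for the fixed $\lambda=\lambda_1^N\rho$. You pick $g$ so that $gy\in X^+_{\lambda_1}$, but $\rho$ can act with weights of either sign on $X^+_{\lambda_1}$, so $gy$ need not lie in $X^+_{\lambda}$; and \Cref{lem:deformation-1ps} cannot be invoked because its hypothesis requires $\rho\in G_{y_0}$, which fails exactly in your ``main obstacle'' case $y_0\notin Z_{\lambda_1}\cdot x_0$. Relatedly, even in the easy case $y_0=x_0$, the threshold $N$ produced by \Cref{lem:deformation-1ps} depends on the point $gy$, so you have not produced a single $\lambda$ that works for all $y$. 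Finally, your claim that $(Gx_0)^T=N_G(T)\cdot x_0$ is finite is false when $T$ is not a maximal torus of $G$ (equivalently, when $\mathrm{rank}\,G_{x_0}<\mathrm{rank}\,G$): then $Z_G(T)\supsetneq T$ and $N_G(T)\cdot x_0$ is positive-dimensional, so the reduction to finitely many Weyl conjugates collapses.

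The paper avoids all of these issues by applying the first statement once, to the generic point $\xi\in X(K)$ (after a finite extension $K'$), producing a single $\lambda^n\rho$ defined over $k$ with $g\xi'\in X^+_{\lambda^n\rho}(K')$. Surjectivity of the proper map $[X^+_{\lambda^n\rho}/P_{\lambda^n\rho}]\to[X/G]$ then follows because its image contains the generic point; this replaces your pointwise argument and gives the required uniformity for free. Regularity at each limit $y'$ is deduced from regularity at $\xi'_0$ by noting that $G_{y'}$ and $G_{\xi'_0}$ become conjugate after base change to $K'$, since $y'$ and $\xi'_0$ lie in the same $G_{K'}$-orbit $Gx_0\otimes K'$.
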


\begin{proof}
  Let $\lambda \co \bG_m \to G$ be a one-parameter subgroup such that $x_0 := \lim_{t \to 0} \lambda(t) \cdot x$ has closed orbit (\Cref{P:destabilization}).  Let $T \subset G_{x_0}$ be a maximal torus containing the image of $\lambda$.  Choose a one-parameter subgroup $\rho \co \bG_m \to T$ such that the composition $\lambda^n \rho \co \bG_m \to T \to G_{x_0}$ is a regular one-parameter subgroup for $n \gg 0$. \Cref{lem:deformation-1ps} implies that $x_0 = \lim_{t \to 0} (\lambda^n \rho)(t) \cdot x$ for $n \gg 0$.
  
  The addendum follows from applying the main statement to the generic point $\xi \in X$ as in the proof of \Cref{P:generic-destabilization}.   There exist a one-parameter subgroup $\lambda \co \bG_m \to G$, a finite extension $K=K(X) \to K'$, and $g \in G(K')$ such that $g \xi' \in X^+_{\lambda}(K')$ and $\xi'_0 := \lim_{t \to 0} \lambda(t) \cdot g \xi'$ has closed orbit in $X_{K'}$, where $\xi' \in X(K')$ is the lift of $\xi \in X(K)$. Choose a maximal torus $T \subset G$ containing $\lambda$ and a one-parameter subgroup $\rho \co \bG_m \to T$ such that the base change of $\lambda^n \rho$ to $K'$ is regular in $G_{\xi'_0}$ for $n \gg 0$.  \Cref{lem:deformation-1ps} implies that $\lambda^n \rho$ also destabilizes $g \xi'$.   It follows that any $G$-orbit of a $k$-point in $X$ contains a $k$-point $y$ such that $y' := \lim_{t \to 0} (\lambda^n \rho)(t) \cdot y \in Gx_0$.  Since the base change of $G_{y'} = \Stab^G(y')$ to $K'$ is conjugate to $(G_{K'})_{\xi'_0} = \Stab^{G_{K'}}(\xi'_0)$, the one-parameter subgroup $\lambda^n \rho \co  \bG_m \to G_{y'}$ is also regular. 
\end{proof}

\begin{remark}
  One can also formulate this result as saying that the degeneration fan \linebreak
  $\mathcal{D}eg([\Spec A/G], x)$, introduced in \cite{hl-instability}, has dimension at least the reductive rank of $G_{x_0}$.
\end{remark}

\section{Proofs of coherent completeness}

In this section, we prove the main coherent completeness theorem (\Cref{thm:main}) as well as the coherent completeness result for positively graded actions (\Cref{thm:positive-graded}).

\subsection{Proof of the main theorem}
\begin{proof}[Proof of \Cref{thm:main}\eqref{thm:main1}]
  Let $\cX = [X/G]$.  By \Cref{thm:descent}, we can replace $\cX$ with $\cX_{\red}$ and $R$ with $R_{\red}$, and we can thus assume that $R = k$ is a field and $\cX$ is reduced.  
  If $G^0$ denotes the connected component of the identity, then $[X/G^0] \to [X/G]$ is finite and \'etale.  Applying \Cref{thm:descent} reduces us to the case that $G$ is connected.  Letting $[\Spec A/G] = \coprod_i \cX_i$ be the irreducible decomposition,  then each $\cX_i$ is isomorphic to $[\Spec A_i / G]$ with $A_i^G = k$.  The morphism $\coprod_i \cX_i \to [X/G]$ is a proper and surjective, and by applying \Cref{thm:descent} again, we are further reduced to the case that $A$ is an integral domain. 

  Since $[\Spec A/G]$ has a unique closed point, \Cref{prop:refined-destabilization} and limit methods 
  imply that there is a one-parameter subgroup $\lambda \co \bG_{m,k'} \to G_{k'}$ where $k \to k'$ is a finite field extension such that $G \cdot (X_{k'})^+_{\lambda} = X_{k'}$ and such that for any closed point $y \in (X_{k'})^+_{\lambda}$, the limit $y' := \lim_{t \to 0} \lambda(t) \cdot gy$ has closed orbit and the induced map $\lambda \co \bG_{m,k'} \to G_{y'}$ is a regular one-parameter subgroup.  By \Cref{thm:descent}, we may replace $k$ with $k'$ and thus we can assume that $\lambda$ is defined over $k$.  We can further assume that the unique closed point $x \in [X/G]$ lifts to a $k$-point of $X$.
  
  We consider the composition
  $$[X^+_{\lambda}/Z_{\lambda}] \xto{f} [X^+_{\lambda}/P_{\lambda}] \xto{g} [X/G].$$
  The morphism $g$ is proper (since $X^+_{\lambda} \subset X$ is a closed subscheme and $G/P_{\lambda}$ is projective) and surjective (since $G \cdot X^+_\lambda = X$).  By applying \Cref{thm:descent}, we are reduced to showing that $([X^+_{\lambda}/P_{\lambda}], g^{-1}(\cG_x))$ satisfies formal functions and is coherent complete.  Since $g$ is proper, the pushforward $g_* \oh_{[X^+_{\lambda}/P_{\lambda}]}$ is coherent and it follows that $\Gamma(X^+_{\lambda})^{P_{\lambda}}$ is finite over $k$ (where $\Gamma(X^+_{\lambda})$ is shorthand for $\Gamma(X^+_{\lambda}, \oh_{X^+_{\lambda}})$).

  On the other hand, the morphism $f$ is faithfully flat and of finite type. The higher base changes of $f$ can be computed as 
  \begin{equation} \label{eqn:higher-base-changes}
    ([X^+_{\lambda}/Z_{\lambda}] \, / \, [X^+_{\lambda}/P_{\lambda}])^n
    \cong [(X^+_{\lambda} \times (U_{\lambda})^{n-1}) / Z_{\lambda}].
  \end{equation}
  Since the composition $[X^0/Z_{\lambda}] \to [X^+/P_{\lambda}] \xto{\ev_0} [X^0/Z_{\lambda}]$ is the identity, the map $\Gamma(X^0_{\lambda})^{Z_{\lambda}} \to \Gamma(X^+_{\lambda})^{P_{\lambda}}$ is injective and thus $\Gamma(X^0_{\lambda})^{Z_{\lambda}}$ is also finite over $k$.  But since $\Gamma(X^+_{\lambda})^{\lambda} = \Gamma(X^0_{\lambda})$, we can conclude that $\Gamma(X^+_{\lambda})^{Z_{\lambda}} = \Gamma(X^0_{\lambda})^{Z_{\lambda}}$ is finite over $k$.  Since $Z_{\lambda}$ is reductive, the map $[X^+_{\lambda}/Z_{\lambda}] \to \Spec \Gamma(X^+_{\lambda})^{Z_{\lambda}}$ is an adequate moduli space.  The stack $[X^+_{\lambda}/Z_{\lambda}]$ has finitely many closed points $y_1, \ldots, y_n \in X_{\lambda}^0$ with each $y_i$ contained in $(g \circ f)^{-1}(\cG_x)$. 
  By construction, each one-parameter subgroup $\lambda \co \bG_m \to \Stab^G(y_i)$ is regular.  It follows that the stabilizer $\Stab^{Z_{\lambda}}(y_i) = Z_{\lambda} \cap \Stab^G(y_i)$ is a maximal torus of $\Stab^G(y_i)$, and in particular linearly reductive.  By \cite[Thm.~4.21]{ref:ahr2}, $[X^+_{\lambda}/Z_{\lambda}] \to \Spec \Gamma(X^+_{\lambda})^{Z_{\lambda}}$ is a good moduli space.  Thus $[X^+_{\lambda}/Z_{\lambda}]$ is coherently complete and satisfies formal functions with respect to $\cG_{y_1} \cup \cdots \cup \cG_{y_n}$, and therefore also coherently complete and satisfies formal functions with respect to $(g \circ f)^{-1}(\cG_x)$ (\Cref{lem:permanence-under-larger-substacks}).

  Similarly, we claim that the higher base changes \eqref{eqn:higher-base-changes} are coherently complete and satisfied formal functions with respect to the preimage of $\cG_x$. Since $\Gamma(U_{\lambda})^{\lambda} = k$ (\Cref{ex:lambda-unipotents-are-positively-graded}), the global sections of $([X^+_{\lambda}/Z_{\lambda}] \, / \, [X^+_{\lambda}/P_{\lambda}])^n$ are identified with the global sections of $[X^+_{\lambda}/Z_{\lambda}]$ (\Cref{lem:positivity-under-products}).  Thus, $([X^+_{\lambda}/Z_{\lambda}] \, / \, [X^+_{\lambda}/P_{\lambda}])^n \to \Spec \Gamma(X^+_{\lambda})^{Z_{\lambda}}$ is an adequate moduli space.  Since the closed points have linearly reductive stabilizers, it is in fact a good moduli space and the claim follows.  
  
  By \Cref{thm:descent}, $[X^+_{\lambda}/P_{\lambda}]$ is coherently complete and satisfies formal functions with respect to the preimage of $\cG_x$, and as we've already observed this implies that $[X/G]$ is coherently complete and satisfies formal functions with respect to $\cG_x$. 
\end{proof}

\begin{remark}
  When $X$ has a $G$-fixed point $x \in X^G$, then there is a more direct argument relying on the generic stabilization theorem (\Cref{P:generic-destabilization}) but not on its refinement (\Cref{prop:refined-destabilization}).  Indeed, as in the proof above, we can reduce to the case that $R=k$ is a field, $G$ is connected, and $X$ is integral.  
  \Cref{P:generic-destabilization} implies that after replacing $k$ with a finite extension, there exists a one-parameter subgroup $\lambda \co \bG_m \to G$ such that $G \cdot X^+_\lambda = X$ and $X^0_{\lambda}=\{x\}$ (set-theoretically).  Let $T \subset G$ be a maximal torus containing $\lambda$ and $B \subset P_{\lambda}$ be a Borel containing $T$. 
  In the composition 
  $$
    [X^+_{\lambda}/T] \xto{f} [X^+_{\lambda}/B] \xto{g} [X/G],
  $$
  the morphism $f$ is proper and surjective, and $g$ is faithfully flat.  The higher base changes of $f$ can be computed as 
  \[ 
    ([X^+_{\lambda}/T] \, / \, [X^+_{\lambda}/B])^n
    \cong [(X^+_{\lambda} \times U^{n-1}) / T]
  \] 
  where $U = B/T$.

  Since $X^+_{\lambda}$ is $\lambda$-positive with $\Gamma(X^+_{\lambda})$ finite over $k$ (\Cref{lem:positively-graded-equivalence}) and $\Gamma(U_{\lambda})^{\lambda} = k$ (\Cref{lem:positively-graded-equivalence}), the global sections of $([X^+_{\lambda}/T] \, / \, [X^+_{\lambda}/B])^n$ are also finite over $k$ (c.f. (\Cref{lem:positivity-under-products})).
  It follows that for each $n \ge 1$, the higher base changes
      $\big([X^+_{\lambda}/T] \, / \, [X^+_{\lambda}/B]\big)^n$
  admit good moduli spaces that are finite schemes over $k$, and that the higher base changes are coherently complete and satisfy formal functions with respect to the preimage of $\cG_x$. The coherent completeness and the formal functions of $([X/G],\cG_x)$ follow from applying \Cref{thm:descent} subsequently to $g$ and $f$.  
\end{remark}
\begin{remark}\label{R:generically-linearly-reductive}
  Standard reductions show that \Cref{conj:main} for all $R$ and $G$
  follows from the case where $R$ is an integral domain and
  $G=\GL_{n,R}$. Applying the method of
  \Cref{thm:main}\eqref{thm:main1} to the fraction field of $R$ and
  replacing $R$ by a finite extension, it follows that it suffices to
  prove \Cref{conj:main} when $[X/G]$ has a saturated and dense open
  with ``nice'' stabilizers \cite[\S2]{ref:ahr2}---in particular, they
  are linearly reductive.
\end{remark}
The following theorem generalizes \Cref{thm:main}\eqref{thm:main2}.

\begin{theorem} \label{thm:reductive}
  Let $G$ be a smooth geometrically reductive group scheme over a noetherian ring $R$.  
  \begin{enumerate}[(1)]
    \item \label{thm:reductive1}
      The morphism $BG \to \Spec R$ is cohomologically proper.
    \item \label{thm:reductive2}
      If in addition $R$ is $I$-adically complete for an ideal $I \subset R$, then the pair $(BG, BG_{R/I})$ satisfies formal functions and is coherently complete.
  \end{enumerate}
\end{theorem}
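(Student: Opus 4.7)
Part (1) is immediate from Example~\ref{E:fund-is-cp} applied with $A = R$ and trivial $G$-action: then $A^G = R$ and $BG = [\Spec R/G] \to \Spec R$ is universally cohomologically proper. The formal functions claim in (2) then follows by combining this with Corollary~\ref{cor:adic-cp-ff} applied to the ideal $I \subset R$. This leaves only the coherent completeness of $(BG, BG_{R/I})$ to be established.

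I follow the strategy outlined in the introduction. By a standard \'etale descent argument (which I defer and address at the end), one reduces to the case where $G$ is split reductive over $R$. So choose a Borel subgroup $B \subset G$ containing a maximal torus $T$, and let $U$ be the unipotent radical of $B$, so that $B = U \rtimes T$ and $B/T \cong U$. Consider the factorization
\[
  BT \xrightarrow{f} BB \xrightarrow{g} BG.
\]
The map $g$ is proper (since $G/B$ is projective over $R$), representable, and surjective, while $f$ is representable, faithfully flat, affine, and surjective, its fibers being $B/T \cong U$. The higher fiber products of $f$ compute as
\[
  (BT/BB)^n \simeq [U^{n-1}/T],
\]
with $T$ acting diagonally on $U^{n-1}$ by conjugation.

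The key observation is that $\Gamma(U, \oh_U)$ is a polynomial $R$-algebra whose $T$-weights are positive roots of $(G,T)$, none of which vanish. Hence no nontrivial monomial in $\Gamma(U^{n-1}, \oh_{U^{n-1}})$ carries zero total $T$-weight, and therefore $\Gamma(U^{n-1})^T = R$. Since the split torus $T$ is linearly reductive, this makes $[U^{n-1}/T] \to \Spec R$ a good moduli space morphism. The central fiber $[U^{n-1}_{R/I}/T_{R/I}]$ has the resolution property (quotients of affine schemes by split tori always do), and since $R$ is $I$-adically complete, \cite[Thm.~1.6]{ref:ahr2} (see Example~\ref{ex:good}) yields that $([U^{n-1}/T], [U^{n-1}/T]_{R/I})$ is coherently complete and satisfies formal functions for every $n \geq 1$. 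Applying Theorem~\ref{thm:descent}\eqref{thm:descent3} to $f$ now yields these two properties for $(BB, BB_{R/I})$, and a second application to the proper representable surjection $g$ (where only the $n=1$ case is required) completes the split case.

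Finally, to reduce from the geometrically reductive setting to the split case, take a finite \'etale cover $\Spec R' \to \Spec R$ trivializing $G$; then $R'$ inherits $IR'$-adic completeness from $R$ (being finite over it), and each iterated fiber product $BG_{(R')^{\otimes_R n}}$ has split reductive structure group, so the case just treated applies. A third invocation of Theorem~\ref{thm:descent}\eqref{thm:descent3}, applied to the faithfully flat representable morphism $BG_{R'} \to BG$, then concludes the proof. The main obstacle will be this final descent step: one must verify that the iterated fiber products and their adic completions behave well under \'etale base change and that one can pass from coherent completeness over the cover to coherent completeness of $BG$ itself --- i.e., that the hypotheses of Theorem~\ref{thm:descent}\eqref{thm:descent3} really are satisfied for all $n \geq 1$ on the nose, rather than just after further \'etale refinement.
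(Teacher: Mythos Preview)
Your approach is essentially the same as the paper's, with one nice shortcut and one small omission.

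The shortcut: you obtain part (1) directly from \Cref{E:fund-is-cp} and then deduce formal functions in (2) from \Cref{cor:adic-cp-ff}; the paper instead derives both from the $BT \to BB \to BG$ descent. Your route is cleaner here.

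The omission: \Cref{E:fund-is-cp} is stated for \emph{reductive} group schemes, whereas $G$ is only assumed smooth and geometrically reductive (so possibly disconnected). The paper handles this by first passing to the identity component: $G^0$ is reductive, $G/G^0$ is finite over $R$, and $BG^0 \to BG$ is a finite \'etale cover to which \Cref{thm:descent} applies. You should insert this reduction before invoking \Cref{E:fund-is-cp}.

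On the \'etale descent to the split case: you are right to flag this, and your worry is well-placed. A finite \'etale cover of $\Spec R$ splitting $G$ need not exist for a general noetherian $R$; reductive group schemes are only \'etale-locally split. The paper is equally terse here (``after an \'etale cover of $\Spec R$''), so you are not missing something from the paper's argument. The point that makes this work is that for \Cref{thm:descent}\eqref{thm:descent3} you need coherent completeness only for $n=1$ and formal functions for all $n$; the latter you already have globally from part~(1) and \Cref{cor:adic-cp-ff} (applied to each $BG_{(R')^{\otimes_R n}}$ after noting that universal cohomological properness is stable under base change and that one may pass to the $I$-adic completion of the base without changing the formal completion). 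For the $n=1$ coherent completeness, one replaces $R'$ by its $I$-adic completion---the formal category $\Coh(\widehat{BG_{R'}})$ is insensitive to this---and then invokes the split case. Alternatively, one can bypass the issue by choosing the splitting cover Zariski-locally on $\Spec R$ and then passing to a suitable finite refinement over each piece; but some such argument is needed, and neither you nor the paper spells it out.
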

\begin{proof}
  Since $G \to \Spec R$ is smooth and geometrically reductive, the connected component of the identity $G^0$ is a reductive group scheme over $R$ and $G/G^0$ is finite over $R$.  By applying \Cref{thm:descent} to the finite cover $B G^0 \to B G$, we are reduced to the case that $G \to \Spec R$ is reductive.  After an \'etale cover of $\Spec R$, $G$ becomes split reductive.  Applying the faithfully flat version of \Cref{thm:descent}, we are further reduced to showing that if $G$ is split reductive with maximal torus $T$ and Borel subgroup $B$. Now consider the composition
  \[
    B T \to B B \to B G.
  \]
  Since $G/B$ is projective over $R$, the map $B B \to B G$ is proper.  
  On the other hand, $B T \to BB$ is faithfully flat and the higher base changes can be described as
  $$(BT/BB)^n = [U^{n-1}/T]$$
  where $U = B/T$ is the unipotent radical arising as the quotient of the left action by $T$ and $T$ acts diagonally on $U^{n-1}$ via the right action.  Since $\Gamma(U)^T = R$ (\Cref{ex:lambda-unipotents-are-positively-graded}) and thus $\Gamma(U^{n-1})^T = R$ (\Cref{lem:positivity-under-products}), the map $(BT/BB)^n \to \Spec R$ is a good moduli space for each $n$.  
  Both statements follow by applying \Cref{thm:descent} to $BT \to BB$ and then $BB \to BG$. 
\end{proof}

\subsection{Positively graded group schemes}

Let $G$ be a smooth affine group scheme  over a complete noetherian local ring $R$ with residue field $k$.  Let $G_u$ be the unipotent radical of $G$ and $G_r = G/G_u$ its reductive quotient.  Let $\lambda \co \bG_m \to G$ be a one-parameter subgroup.

\begin{definition}  
  We say that $G$ is \emph{positively graded with respect to $\lambda$} if the exact sequence $1 \to G_u \to G \to G_r \to 1$ splits, the conjugation action on $\Gamma(G_u, \oh_{G_u})$  is $\lambda$-positive, and $\lambda$ is central in $G_r$.
\end{definition}

\begin{remark}
  If $G$ is defined over a field, then the conjugation action on $\Gamma(G_u, \oh_{G_u})$  is $\lambda$-positive if and only if the conjugation action of $\lambda$ on $\Lie(G_u)$ is positive, and $\lambda$ is central in $G_r$ if and only if the adjoint action of $\lambda$ on $\Lie(G_r) = \Lie(G)/\Lie(G_u)$ is trivial.  
  
  Since the $\lambda$-action on $\Gamma(G, \oh_{G_u})$ is positive and $\lambda$ is central in $G$, the parabolic $P_{\lambda}^G$ must be all of $G$, and  the short exact sequence $1 \to G_u \to G \to G_r \to 1$ is identified with the sequence $1 \to U_{\lambda} \to P_{\lambda} \to Z_{\lambda} \to 1$. 
\end{remark}

We can now prove \Cref{thm:positive-graded}:  if $G$ is positively graded with respect to $\lambda$, then
\begin{enumerate}[(1)] 
  \item $BG$ is cohomologically proper over $k$, and $(BG, BG_{k})$ is coherently complete and satisfies formal functions, and 
  \item if $G$ acts on an affine scheme $X = \Spec A$ of finite type over $R$ such that $A^G = R$, $\lambda$ acts semipositively on $A$ and $[X/G_r]$ satisfies \Cref{conj:main}, then $[X/G]$ is cohomologically proper over $A^G$ and $[X/G]$ satisfies formal functions and is coherently complete along its unique closed point.
\end{enumerate}

\begin{proof}[Proof of \Cref{thm:positive-graded}]
  Let $G_r \to G$ be a group homomorphism splitting the surjection $G \to G_r$. 
  For \eqref{thm:positive-graded1}, we consider the the morphism 
  $$BG_r \to BG$$
  induced from the splitting.  The map $BG_r \to BG$ is faithfully flat and there is an isomorphism 
  $$(BG_r/BG)^n \cong [G_u^{n-1}/G_r]$$
  where $G_r$ acts diagonally via the conjugation action. We know that $\lambda$ acts positively on $G_u = U_{\lambda}$ with $\Gamma(G_u)^{\lambda}=R$ (\Cref{ex:lambda-unipotents-are-positively-graded}), and that the same holds for the $\lambda$-action on the higher fiber products (\Cref{lem:positivity-under-products}). In particular, $(BG_r/BG)^n \to \Spec R$ are good moduli spaces.  Since $BG_r$ is cohomologically proper (\Cref{thm:main}\eqref{thm:main2}) and $(BG_r, B (G_r)_k)$ is coherently complete and satisfies formal functions, \Cref{thm:descent} implies that the same holds for $(BG, BG_k)$.

  For \eqref{thm:positive-graded2}, we first claim that $[\Spec A/G]$ has a unique closed point $x$.  We will use essentially the same argument as in the case that $G$ is linearly reductive.  Let $\pi \co [\Spec A/G] \to \Spec A^G$.  It suffices to show that if $\cZ_1, \cZ_2 \subset [\Spec A/G]$ are closed substacks, then each $\pi(\cZ_i) \subset \Spec A^G$ is closed and $\pi(\cZ_1) \cap \pi(\cZ_2) = \pi(\cZ_1 \cap \cZ_2)$.  Let $I_1$, $I_2 \subset A$ be the $G$-invariant ideals defining $\cZ_1$ and $\cZ_2$. Since $A$ is $\lambda$-positive, we have $A^G = A^{G_r}$. Thus, the image of $\cZ_i$ is identified with the image of the closed $G_r$-invariant ideal $V(I_i)$ under $[\Spec A/G_r] \to \Spec A^{G_r}$, and it follows that $\pi(\cZ_i)$ is closed by the reductive case.  Similarly, since since $I_1$, $I_2$, and $I_1 + I_2$ are $\lambda$-positive, we have  
  \small\[
   {\pi(\cZ_1 \cap \cZ_2) = V((I_1+I_2)^G) = V((I_1 + I_2)^{G_r}) = V(I_1^{G_r}) \cap V(I_2^{G_r}) = V(I_1^{G}) \cap V(I_2^{G}) = \pi(\cZ_1) \cap \pi(\cZ_2),}
 \]
 \normalsize
  where the middle equality used the reductive case.  The claim follows.
  
  We now consider the morphism $[\Spec A/G_r] \to [\Spec A / G]$ and use the following identification of the higher fiber products
  \[
    ([\Spec A/G_r]/[\Spec A/G])^n = [(\Spec A \times G_u^{n-1}) / G_r].
  \]
  Since $\lambda$ acts positively on $G_u$ with $\Gamma(G_u)^{\lambda}=R=A^G$ and $\lambda$ acts semipositively on $A$, the group of global sections of each higher fiber product is identified with $R$ by \Cref{lem:positivity-under-products}.  As $G_r$ is reductive, each higher fiber product is an adequate moduli space over $\Spec R$.  It follows from \Cref{thm:descent}, \Cref{conj:main} and \Cref{cor:reductive-ff} that $[\Spec A/G]$ is cohomologically proper, and is coherently complete and satisfies formal functions with respect to $\cG_x$.
\end{proof}
\bibliography{bibliography_coherent_completeness} 
\bibliographystyle{alpha} 
\end{document}